\NeedsTeXFormat{LaTeX2e}

\documentclass[10pt,reqno]{amsart}
\usepackage{latexsym,amsmath}
\usepackage{enumerate}
\usepackage{amsfonts}
\usepackage{amssymb}
\usepackage{latexsym}

\usepackage[T1]{fontenc}
\usepackage{fourier}
\usepackage{bbm}

\usepackage{amsfonts,amsmath}
\usepackage{color} 
\usepackage{fullpage}
\usepackage{latexsym,amsmath}

\usepackage{bbm}

\DeclareMathOperator{\tr}{tr}

\numberwithin{equation}{section}

\renewcommand{\vec}[1]{\boldsymbol{#1}}

\newcommand{\dk}{d_{k-\mathrm{SAT}}}

\newcommand\KL[2]{D_{\mathrm{KL}}\bc{{{#1}\|{#2}}}}

\newcommand\SIGMA{\vec\sigma}

\newtheorem{definition}{Definition}[section]

\newtheorem{claim}[definition]{Claim}

\newtheorem{theorem}[definition]{Theorem}
\newtheorem{lemma}[definition]{Lemma}
\newtheorem{proposition}[definition]{Proposition}
\newtheorem{corollary}[definition]{Corollary}

\newtheorem{fact}[definition]{Fact}

\newcommand\sign{\mathrm{sign}}

\newcommand\id{\mathrm{id}}

\newcommand\PHI{\vec\Phi}

\newcommand\cA{\mathcal{A}} 
\newcommand\cB{\mathcal{B}} 
\newcommand\cC{\mathcal{C}} 
\newcommand\cD{\mathcal{D}} 
\newcommand\cF{\mathcal{F}} 
 
\newcommand\cE{\mathcal{E}}

\newcommand\cQ{\mathcal{Q}} 
 
\newcommand\cS{\mathcal{S}} 
 
\newcommand\cI{\mathcal{I}}

\newcommand\cM{\mathcal{M}}

\newcommand\cX{\mathcal{X}}

\def\cC{{\mathcal C}}
\def\cE{{\mathcal E}}

\newcommand\eps{\varepsilon}

\newcommand\Var{\mathrm{Var}} 

\DeclareMathOperator{\Erw}{\mathrm E}
\DeclareMathOperator{\pr}{\mathrm P}

\newcommand{\vecone}{\vec{1}}

\newcommand{\Po}{{\rm Po}} 
\newcommand{\Bin}{{\rm Bin}}

\newcommand{\bink}[2] {{{#1}\choose {#2}}}

\newcommand\bc[1]{\left({#1}\right)} 
\newcommand\cbc[1]{\left\{{#1}\right\}} 
\newcommand\bcfr[2]{\bc{\frac{#1}{#2}}} 
\newcommand{\bck}[1]{\left\langle{#1}\right\rangle} 
\newcommand\brk[1]{\left\lbrack{#1}\right\rbrack} 
\newcommand\scal[2]{\bck{{#1},{#2}}} 
 
\newcommand\abs[1]{\left|{#1}\right|}

\newcommand{\whp}{w.h.p.} 
 
\newcommand{\stacksign}[2]{{\stackrel{\mbox{\scriptsize #1}}{#2}}} 
\newcommand{\tensor}{\otimes}

\newcommand{\Bollobas}{Bollob\'as}

\newcommand\Lem{Lemma}
\newcommand\Prop{Proposition}
\newcommand\Thm{Theorem}

\newcommand\Cor{Corollary}
\newcommand\Sec{Section}

\begin{document}

\title{The number of satisfying assignments of random regular $k$-SAT formulas}

\author[Coja-Oghlan and Wormald]{Amin Coja-Oghlan \and Nick Wormald}
\thanks{The research leading to these results has received funding from the European Research Council under the European Union's Seventh Framework
			Programme (FP/2007-2013) / ERC Grant Agreement n.\ 278857--PTCC}
\date{\today} 
 
\address{Amin Coja-Oghlan, {\tt acoghlan@math.uni-frankfurt.de}, Goethe University, Mathematics Institute, 10 Robert Mayer St, Frankfurt 60325, Germany.}

\address{Nick Wormald,  {\tt nick.wormald@monash.edu}, Monash University, Department of Mathematical Sciences, VIC 3800, Australia.}

\maketitle

\begin{abstract}
\noindent
Let $\PHI$ be a random $k$-SAT formula in which every variable occurs precisely $d$ times positively and $d$ times negatively.
Assuming that $k$ is sufficiently large and that $d$ is slightly below the critical degree where the formula becomes unsatisfiable with high probability,
we determine the limiting distribution of the logarithm of the number of satisfying assignments.

\hfill MSC: 60C05, 05C80.
\end{abstract}

\section{Introduction}

\noindent
In order to study random instances of constraint satisfaction problems it is key to get a handle on the number of solutions.
In fact, in many examples such as $k$-colorability in random graphs  the best current estimates of the threshold
for the existence of solutions derive from calculating the second moment of the number of solutions~\cite{nature,Danny}.
Furthermore, if the number of solutions is sufficiently concentrated, then typical properties of random solutions
as well as the geometry of the set of solutions can be studied by way of the `planted model', an easily accessible distribution~\cite{Barriers}.
However, {prior to this work} the limiting distribution of the number of solutions has not been determined  precisely in any of the standard examples
of random constraint satisfaction problems.

In this paper we show how the limiting distribution of the number of solutions can be obtained by combining the second moment method
with a subtle application of the ``small subgraph conditioning'' technique. 
The concrete problem that we deal with is the {\em random regular $k$-SAT problem}.
In this model there are $n$ Boolean variables $x_1,\ldots,x_n$ and $m=2dn/k$ Boolean clauses of length $k$.
We always assume that $k$ divides $2dn$.
The random formula $\PHI_n(d,k)$ is obtained by choosing without replacement for each variable $x_i$ precisely $d$ out of the $km$ available literal slots
where $x_i$ appears positively and another $d$ slots where $x_i$ appears negatively.
Let $Z$ be the number of satisfying assignments of $\PHI=\PHI_n(d,k)$.

For $k$ exceeding a certain constant $k_0$ an explicit literal degree $\dk$ is known such that~\cite{kSAT}
	\begin{align}\label{eqSAT}
	\liminf_{n\to\infty}\pr\brk{\PHI\mbox{ is satisfiable}}&>0&\mbox{ if }d<\dk,\\
	\lim_{n\to\infty}\pr\brk{\PHI\mbox{ is satisfiable}}&=0&\mbox{ if }d>\dk.
		\nonumber
	\end{align}
While the precise formula  is cumbersome, in the limit of large $k$ we have
	\begin{align}\label{eqThreshold}
	2\dk/k=2^k\ln2-k\ln2/2-(1+\ln2)/2+\eps_k,\quad\mbox{where }\lim_{k\to\infty}\eps_k=0.
	\end{align}
Our main result determines the limiting distribution of $ Z$ for degrees $d$ almost (but not quite) matching $\dk$.

\begin{theorem}\label{Thm_SAT}
There exists a constant $k_0$ such that for all $k\geq k_0$ and $d>0$ such that
	\begin{align}\label{eqAssumptionOnd}
	2d/k\leq2^k\ln 2-k\ln 2/2-4
	\end{align}
the following is true.
Let $q=q(k)\in(0,1)$ be the unique solution to the equation
	\begin{equation}\label{eqq}
	2q=1-(1-q)^k
	\end{equation}
{Moreover, for $l\geq1$ and $0\leq t\leq l$ let
	\begin{align}\label{eqLambdaDeltaThm}
	\lambda_{l,t}&=\frac1{2l}\bink lt
		\bcfr{(k-1)(d-1)}{2}^l\bcfr d{d-1}^{t}
		&
		\delta_{l,t}&=(-1)^t(2q-1)^l
	\end{align}}
and let $(\Lambda_{l,t})_{l,t}$ be a family of independent Poisson variables with $\Erw[\Lambda_{l,t}]=\lambda_{l,t}$.
Then the random variable
	\begin{align}\label{eqW}
	W=\prod_{l\geq1}\prod_{t\leq l}(1+\delta_{l,t})^{\Lambda_{l,t}}\exp(-\lambda_{l,t}\delta_{l,t})
	\end{align}
satisfies $\Erw[W^2]<\infty$ and
	\begin{align}\label{eqZtoW}
	Z\cdot\frac{(4q(1-q))^{dn+\frac12}}{2^n(1-(1-q)^k)^{m+\frac12}}\ \stacksign{$n\to\infty$}\longrightarrow\  W\qquad\mbox{in distribution}.
			\end{align}
\end{theorem}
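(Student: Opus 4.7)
The natural strategy is the second moment method combined with the small subgraph conditioning technique of Robinson and Wormald, in the form made convenient by Janson. We will prove $Z/\Erw[Z] \to W$ in distribution and, separately, compute $\Erw[Z]$ up to a $1+o(1)$ factor that matches the deterministic prefactor in \eqref{eqZtoW} exactly. For the first moment, write $\Erw[Z]$ as a sum over truth assignments $\sigma$ of the probability that $\sigma$ satisfies $\PHI$; the $0/1$-symmetry of the model forces the sum to concentrate on balanced $\sigma$'s. For such $\sigma$, the configuration-model representation reduces the satisfiability probability to a clausewise product which, after Stirling's formula and a Laplace argument in one real variable, yields
\[
\Erw[Z] = (1+o(1))\, \frac{2^n(1-(1-q)^k)^{m+1/2}}{(4q(1-q))^{dn+1/2}},
\]
with $q$ emerging as the unique solution of the stationarity equation \eqref{eqq} and the half-integer exponents coming from the Gaussian prefactor of the Laplace integral.

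For the second moment, expand $\Erw[Z^2]$ as a sum over pairs $(\sigma,\tau)$ parametrised by their overlap $\alpha n$. Under assumption \eqref{eqAssumptionOnd} the associated entropy function is strictly maximised at $\alpha = 1/2$, and a two-dimensional Laplace expansion around this maximum gives
\[
\frac{\Erw[Z^2]}{\Erw[Z]^2} \longrightarrow \prod_{l\geq 1}\prod_{t\leq l}\exp(\lambda_{l,t}\delta_{l,t}^2) < \infty.
\]
In parallel, for each $(l,t)$ let $Y_{l,t}$ count cycles of length $2l$ in the bipartite variable-clause incidence graph with a prescribed sign type indexed by $t$. A standard configuration-model moment calculation shows joint convergence $(Y_{l,t})\to(\Lambda_{l,t})$ to independent Poissons with means $\lambda_{l,t}$; the factor $\binom{l}{t}(d/(d-1))^t$ accounts for the sign pattern around the cycle while $\frac{1}{2l}((k-1)(d-1)/2)^l$ is the usual count of cycles in a bipartite configuration model with variable-side degree split into a positive and negative half of size $d$ each.

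To identify $\delta_{l,t}$ I would switch to the planted model: sample a balanced $\sigma$ and then a uniform random $\sigma$-satisfying formula, and compute the expected number of type-$(l,t)$ cycles in this planted distribution; the ratio to $\lambda_{l,t}$ equals $1+\delta_{l,t}$. Each of the $l$ variables around the cycle contributes a factor $2q-1$ (the local cavity bias between the two possible planted values, since $q$ is precisely the cavity marginal by \eqref{eqq}) and each of the $t$ sign-flipped slots contributes an extra $-1$, giving $\delta_{l,t}=(-1)^t(2q-1)^l$. The joint factorial moment condition required by the small subgraph conditioning theorem follows by the same planted-model argument, and summability $\sum_{l,t}\lambda_{l,t}\delta_{l,t}^2 <\infty$ is immediate from $|2q-1|<1$. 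Applying Janson's small subgraph conditioning theorem then yields $Z/\Erw[Z]\to W$ in distribution, which combined with the first-moment asymptotic gives \eqref{eqZtoW}.

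The main obstacle is the second moment: one must use \eqref{eqAssumptionOnd} to show that the two-variable entropy function is strictly maximised at the symmetric point, and the resulting Gaussian constant must match the infinite product $\prod \exp(\lambda_{l,t}\delta_{l,t}^2)$ on the nose. Parameter matching of this kind is the signature technical difficulty of small subgraph conditioning proofs, and together with the planted-model identification of $\delta_{l,t}$ it constitutes the technical heart of the argument.
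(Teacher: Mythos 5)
Your outline is the classical Robinson--Wormald/Janson route applied directly to $Z$: Poisson cycle counts, planted-model identification of the $\delta$'s, and the exact second-moment limit $\Erw[Z^2]/\Erw[Z]^2\to\prod_{l,t}\exp(\lambda_{l,t}\delta_{l,t}^2)$, fed into Janson's small subgraph conditioning theorem. The two steps you merely assert are precisely the ones that do not come cheaply here, and the paper's entire technical contribution is a device to avoid them. First, the central part of $\Erw[Z^2]$ is not ``a two-dimensional Laplace expansion around $\alpha=1/2$'': for a pair of assignments one has to optimise over the joint distribution $\nu$ of clause satisfaction patterns on $\Sigma\times\Sigma$, a space of dimension $(2^k-1)^2$, whose maximiser is defined only implicitly (this is what Lemma~\ref{Lemma_parameterShift} and the $q_{z_1,z_2}$ are about), and the Gaussian determinant at that maximiser must match the infinite product exactly. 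The paper states explicitly that the true/false asymmetry makes these implicit parameters too unwieldy to track directly and that Janson's theorem ``does not seem to apply directly''; note that even for the cruder task of bounding the off-central overlaps the paper only proves a tail estimate (Lemma~\ref{Lemma_Kosta}), and the global domination of $\alpha=n/2$ under \eqref{eqAssumptionOnd} itself requires the appendix analysis of further critical points. So as written your argument has a genuine gap at its decisive step: the claimed second-moment asymptotics for $Z$, and hence the hypothesis of Janson's theorem, are not established. (A minor additional slip: in the regular model every assignment has the same probability of being satisfying, since each variable occurs exactly $d$ times positively and $d$ times negatively, so there is no ``concentration on balanced $\sigma$'' to invoke in the first moment.)

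What the paper does instead is to decompose $Z=\sum_{\mu}Z_\mu$ according to the empirical distribution $\mu$ of clause satisfaction patterns, a polynomial number of summands concentrated near $\bar\mu$. For $\mu\in\cM_\omega$ the second moment of $Z_\mu$ at the central overlap can be computed essentially mechanically via Bolthausen's central limit theorem for random permutations (Lemma~\ref{Lemma_perm}, Proposition~\ref{Prop_sm}), with no implicit parameters appearing, and the covariances with the cycle counts are obtained from a planted model conditioned on $\cS_\mu$. Because Janson's theorem is not invoked, the variance-conditioning argument is carried out by hand, uniformly over the whole family $\{Z_\mu:\mu\in\cM_\omega\}$ with a final limit $\omega\to\infty$ (Lemma~\ref{Thm_sscFamily}, Corollary~\ref{Lemma_ourApproximation}, Lemma~\ref{Lemma_condMean}), and $W$ is constructed as an $L^2$-bounded martingale limit. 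Your proposal would become a proof only if you either carried out the high-dimensional Laplace/implicit-parameter computation you defer to ``parameter matching'', or adopted some equivalent of the paper's $Z_\mu$ decomposition; in its current form the conclusion does not follow from what you establish.
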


It is not difficult to verify that
	\begin{equation}\label{eqFirstMomentBig}
	n\ln2+\bc{m+\frac12}\ln(1-(1-q)^k)-\bc{dn+\frac12}\ln(4q(1-q))=\Omega(n)
	\end{equation}
	 for $d$ satisfying (\ref{eqAssumptionOnd}), {and additionally that $\Erw|\ln W|<\infty$}.
{Hence, (\ref{eqZtoW}) and (\ref{eqFirstMomentBig}) imply that $\ln Z=\Omega(n)$ \whp\ for such $d$.}
In particular, we obtain

\begin{corollary}
For $k\geq k_0$ and $d$ satisfying (\ref{eqAssumptionOnd}) we have
	$\lim_{n\to\infty}\pr\brk{\PHI\mbox{ is satisfiable}}=1$.
\end{corollary}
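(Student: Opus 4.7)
The plan is to view the corollary as a soft consequence of Theorem~\ref{Thm_SAT} together with the two auxiliary observations stated immediately after it: that the deterministic exponent in \eqref{eqFirstMomentBig} is $\Omega(n)$, and that $\Erw|\ln W|<\infty$.

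Write $c_n$ for the deterministic prefactor multiplying $Z$ in \eqref{eqZtoW}. Taking logarithms, \eqref{eqFirstMomentBig} says $\ln(1/c_n)=\Omega(n)$, i.e.\ $c_n\leq e^{-\gamma n}$ for some constant $\gamma=\gamma(k,d)>0$ and all sufficiently large $n$. The bound $\Erw|\ln W|<\infty$ in turn forces $0<W<\infty$ almost surely, so $\pr\brk{W<\eta}\to 0$ as $\eta\downarrow 0$. (That the infinite product defining $W$ is unambiguously positive follows once one notes that the unique $q\in(0,1)$ solving \eqref{eqq} satisfies $q<1/2$, so $|\delta_{l,t}|=(1-2q)^l<1$ for every $l,t$.)

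These two ingredients combine via a standard portmanteau argument. For any $\eta>0$ that is a continuity point of the law of $W$, the distributional convergence in \eqref{eqZtoW} yields $\pr\brk{Zc_n\geq\eta}\to\pr\brk{W\geq\eta}$. Once $n$ is large enough that $c_n<\eta$ one has $\{Zc_n\geq\eta\}\subseteq\{Z\geq 1\}$, hence $\liminf_n\pr\brk{Z\geq 1}\geq\pr\brk{W\geq\eta}$; letting $\eta\downarrow 0$ gives $\pr\brk{Z\geq 1}\to 1$, which is precisely the claim.

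The one step demanding any actual computation is the verification of \eqref{eqFirstMomentBig}. Given \eqref{eqThreshold}, the assumption \eqref{eqAssumptionOnd} leaves a constant linear slack of size at least $4-(1+\ln 2)/2+\eps_k$ between $2d/k$ and the satisfiability threshold $2\dk/k$; a routine Taylor expansion of the left-hand side of \eqref{eqFirstMomentBig} in $q$, using the identity $(1-q)^k=1-2q$ coming from \eqref{eqq}, converts this slack into the required $\Omega(n)$ lower bound. I expect this calculation, though purely mechanical, to be the main place where any care is needed; everything else in the deduction is soft.
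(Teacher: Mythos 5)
Your proposal is correct and follows essentially the same route as the paper, which deduces the corollary directly from Theorem~\ref{Thm_SAT} together with (\ref{eqFirstMomentBig}) and $\Erw|\ln W|<\infty$ (the paper verifies (\ref{eqFirstMomentBig}) via the expansion $q=\frac12-2^{-1-k}+O(k/4^k)$ in \Sec~\ref{Sec_fm}); you merely spell out the portmanteau step that the paper leaves implicit.
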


\noindent
The constant $4$ in (\ref{eqAssumptionOnd}) is not optimal.
In fact, a truncated second moment argument as in~\cite{Lenka} in combination with an argument similar to~\cite{victor}
might extend the above results up to the exact condensation threshold of the regular $k$-SAT problem,
although both steps would require substantial technical work.

\subsection*{Related work}
Random regular $k$-SAT instances were first studied by Rathi, Aurell, Rasmussen and Skoglund~\cite{Rathi} via the second moment method.
They proved that $\liminf_{n\to\infty}\pr\brk{\PHI\mbox{ is satisfiable}}>0$ for degrees $d$ close to $\dk$.
The latter was determined by Coja-Oghlan and Panagiotou~\cite{kSAT} by a second moment argument
that incorporates ``Survey Propagation'', a technique from statistical physics~\cite{MPZ}.
A closely related paper by Ding, Sly and Sun~\cite{DSS1} studies the regular $k$-NAESAT problem,
	which asks for a satisfying assignment whose inverse is satisfying as well.
In fact, Ding, Sly and Sun have an argument based on Fourier analysis that shows that the NAE-satisfiability probability is
not just bounded away from $0$ but actually converges to $1$ (in contrast to~(\ref{eqSAT})).
{Recently Sly, Sun and Zhang~\cite{SSZ} extended this argument to calculate the expectation of the $n$th root of the number of NAE-solutions.
This is quite a difficult problem due to a phenomenon known as ``replica symmetry breaking'' in physics~\cite{MPZ}.
However, \cite{SSZ} does not determine the limiting distribution.}

Conceptually the regular $k$-SAT model is simpler than the better known
uniform model where a specific number of clauses are drawn uniformly and independently.
This is because the local structure of regular formulas fluctuates less as each variable has precisely $d$ positive and $d$
negative occurrences and the total number of cycles of a fixed length is bounded \whp\
In the case of uniformly random $k$-SAT formulas Frieze and Wormald~\cite{FriezeWormald}
used the second moment method to determine the $k$-SAT threshold in the case that $k=k(n)\to\infty$ as $n\to\infty$.
Moreover, for clause lengths $k$ that remain fixed as $n\to\infty$ Achlioptas and Moore~\cite{nae} significantly improved the previous
lower bound on the satisfiability threshold by applying the second moment method to the number of	NAE-solutions.
Working with ``balanced'' assignments instead,  Achlioptas and Peres~\cite{yuval} improved the NAE-lower bound by a factor of two.
This left an additive gap of $\Theta(k)$ between the lower bound and an upper bound of  Kirousis, Kranakis, Krizanc and Stamatiou~\cite{KKKS}.
Coja-Oghlan and Panagiotou~\cite{kSAT,SAT} narrowed the gap to a function that tends to $0$ in the limit of large $k$ by a second moment
argument inspired by Survey Propagation.
Finally, Ding, Sly and Sun~\cite{DSS3} determined the precise satisfiability threshold in uniformly random formulas for large enough $k$
via a second moment argument that fully rigorises the Survey Propagation calculations.

We prove \Thm~\ref{Thm_SAT} by combining the second moment argument from~\cite{SAT} with small subgraph conditioning.
This method was originally developed to prove that random regular graphs of degree at least three are Hamiltonian \whp~\cite{RobinsonWormald}.
Using Skorokhod's representation theorem,
Janson~\cite{Janson} showed that small subgraph conditioning can be used to obtain limiting distributions.
However, Janson's result does not seem to apply directly in our context.
Instead, we perform a variance analysis along the lines of \cite{RobinsonWormald} for a family of random variables
that count satisfying assignments with certain peculiar properties.

Based on an early version of the present paper, the technique explained in Section~\ref{Sec_overview} was used by Rassmann~\cite{R} to analyse the number of 2-colourings of random $k$-uniform hypergraphs.

\subsection*{Notation and preliminaries}
Throughout the paper we tacitly assume that $n$ is sufficiently large, that $k$ exceeds a sufficiently large constant $k_0$
and that $d$ satisfies (\ref{eqAssumptionOnd}).
We encode the Boolean values `true' and `false' by $1$ and $-1$, respectively.
Moreover, we extend truth assignments $\sigma:\{x_1,\ldots,x_n\}\to\{\pm1\}$ to the set of literals by letting $\sigma(\neg x_i)=-\sigma(x_i)$.
We use $O$-notation with respect to both $n$ and $k$, 
with the convention that $O(1)$, $o(1)$ etc.\ always refer to the limit {as} $n\to\infty$.
For a number $l$ and an integer $h\geq0$ we write
	$$l^{\underline h}=\prod_{0\leq i<h}(l-i)$$
for the falling factorial; in particular, $l^{\underline 0}=1$.
Further, with the convention {$\ln0=-\infty$, $0\ln0=0\ln\frac00=0$}, we recall that the Kullback-Leibler divergence of two probability distributions
$(p_x)_{x\in\cX},(q_x)_{x\in\cX}$ on a finite set $\cX$ is
	\begin{align}\label{eqKL}
	\KL qp&=\sum_{x\in\cX}q_x\ln\frac{q_x}{p_x}\in[0,\infty].
	\end{align}
Finally, we denote the scalar product of vectors $\xi,\eta$ by $\scal\xi\eta$ and we write $\vecone$ for the vector with all entries equal to one (in any dimension).

\section{Overview}\label{Sec_overview}

\noindent
The basic insight behind small subgraph conditioning is that the fluctuations of $\ln Z$ can be attributed to
the number of certain small sub-structures of the random formula $\PHI$.
To elaborate, we rephrase the definition of $\PHI$ by modifying {what is essentially} a bijection model due to B{\'e}k{\'e}ssy,  B{\'e}k{\'e}ssy and  Koml\'os~\cite{BBK} in the context of matrices with given line sums. With the incorporation of signs, it becomes the following:
we view $\PHI$ as a uniformly random bijection 
	\begin{equation}\label{eqConfModel}
	[m]\times[k]\to\{x_1,\ldots,x_n\}\times[d]\times\{\pm1\},\qquad(i,j)\mapsto\PHI[i,j].
	\end{equation}
Thus, (\ref{eqConfModel}) maps each clause index $i\in[m]$ and each position $j\in[k]$ in that clause
to a Boolean variable $x\in\{x_1,\ldots,x_n\}$, an index $h\in[d]$ {(denoting which of the $d$ copies of the literal is used),} and a sign $s\in\{\pm1\}$ {} indicating whether the variable
appears as a {positive} or as a negative literal.
In terms of propositional formulas, the triple $\PHI[i,j]$ corresponds to the literal $x$ if $s=1$ and $\neg x$ if $s=-1$.
Let us write $\partial(i,j)=\partial_{\PHI}(i,j)$ for the first and $\sign(i,j)=\sign_{\PHI}(i,j)$ for the last component of $\PHI[i,j]$.
Then an assignment $\sigma:\{x_1,\ldots,x_n\}\to\{\pm1\}$ satisfies $\PHI$ iff
	$\min_{i\in[m]}\max_{j\in[k]}\sign(i,j)\sigma(\partial(i,j))=1$.
Thus, we can write
	$$Z=\sum_{\sigma:\{x_1,\ldots,x_n\}\to\{\pm1\}}\prod_{i=1}^m\brk{1-\prod_{j=1}^k\frac{1-\sign(i,j)\sigma(\partial(i,j))}{2}}.$$
Because (\ref{eqConfModel}) is a bijection each variable appears  precisely $2d$ times in total
in the corresponding propositional formula, namely $d$ times positively and $d$ times negatively.
Further, for a literal $l$ and an index $h\in[d]$ we let $\partial(l,h)=\partial_{\PHI}(l,h)$ denote the pair $(i,j)\in[m]\times[k]$ such that
$\PHI[i,j]=(x,h,1)$ if $l=x$ and $\PHI[i,j]=(x,h,-1)$ if $l=\neg x$.

It is natural to represent $\PHI$ by a bipartite {multigraph}, the {\em factor graph} $G(\PHI)$.
It has vertices $[m]$ corresponding to the clauses and vertices $\{x_1,\ldots,x_n\}$ representing the Boolean variables.
For each pair $(i,j)\in[m]\times[k]$ we insert an edge between $i$ and the variable $x$ such that $\PHI[i,j]\in\{x\}\times[d]\times\{\pm1\}$.
Additionally, we annotate the edge by $\sign(i,j)\in\{\pm1\}$.
Of course, $G(\PHI)$ may well have multiple edges.

Because the factor graph is sparse and random, standard arguments show that it is unlikely to contain many short cycles.
Hence, if we explore the factor graph from a randomly chosen root clause for some bounded number $2l$ of steps, then 
we will typically see a ``deterministic'' tree in which each clause has degree $k$ and every variable has $d$ positive and $d$ negative occurrences.
However, a bounded number of clauses will {take part in any cycles} of length at most $2l$.
As it will be important to keep track of the literal signs traversed along the cycle,
for a given $s=(s_2,\ldots,s_{2l+1})\in\{\pm1\}^{2l}$ we let $C_s=C_s(\PHI)$ be the number of cycles of length $2l$ in which the initial literal has sign $s_2$,%
the second one has sign $s_3$, etc. 
{(The starting index is chosen for convenient index arithmetic.)} 
{We call $s$ the {\em sign pattern} of the cycle.}
Moreover, to avoid {overcounting} we always deem the clause with the smallest index the starting point of the cycle,
	and the cycle is oriented towards the slot of that clause with the smaller index.
Formally, given $l\geq1$ and a {sign pattern} $s=(s_{2},\ldots,s_{2l+1})\in\{\pm1\}^{2l}$, let $C_s$ 
be the number of sequences $(i_2,j_2),\ldots,(i_{2l+1},j_{2l+1})\in[m]\times[k]$ such that
	\begin{description}
	\item[CY1] $i_2=i_{2l+1}=\min\{i_2,\ldots,i_{2l}\}$ and $i_2,\ldots,i_{2l}$ are pairwise distinct
	\item[CY2] $i_{t+1}=i_t$ if $t\in[2l+1]$ is odd,
	\item[CY3] $\partial(i_t,j_t)=\partial(i_{t+1},j_{t+1})$ if $t\in[2l+1]$ is even but $\partial(i_{2},j_{2}),\ldots,\partial(i_{2l},j_{2l})$ are pairwise distinct,
	\item[CY4] we have $j_2<j_{2l+1}$,
	\item[CY5] $\sign(i_t,j_t)=s_{i_tj_t}$ for all $t\in[2l+1]$.
	\end{description}
Moreover, for $\ell\geq1$ let $\cF_\ell=\cF_{\ell,n}(d,k)$ be the $\sigma$-algebra generated by the random variables
$C_s$ with $s\in\bigcup_{l\leq\ell}\{\pm1\}^{2l}$.

By the standard decomposition of the variance, we can write for any $\ell\geq1$
	\begin{align}\label{eqVarDecomp}
	\Erw[Z^2]-\Erw[Z]^2&=
		\Erw[\Erw[Z|\cF_\ell]^2-\Erw[Z]^2]+\Erw[\Erw[Z^2|\cF_\ell]-\Erw[Z|\cF_\ell]^2].
	\end{align}
The term $\Erw[\Erw[Z|\cF_\ell]^2-\Erw[Z]^2]$ accounts for the amount of variance induced by the fluctuations
of the number of cycles of length at most $2\ell$.
Given the number of cycles of length at most $2\ell$, the conditional variance $\Var[Z|\cF_\ell]=\Erw[\Erw[Z^2|\cF_\ell]-\Erw[Z|\cF_\ell]^2]$ remains. 
Generally, small subgraph conditioning is based on showing that
	\begin{align}\label{eqVarDecomp2}
	\lim_{\ell\to\infty}\limsup_{n\to\infty}
		\Erw\brk{\frac{\Erw[Z^2|\cF_\ell]-\Erw[Z^2]}{\Erw[Z]^2}}=0.
	\end{align}
In other words, in the limit of large $\ell$ and $n$, {with $n$ growing much faster than $\ell$,} the second summand in (\ref{eqVarDecomp}) is negligible.
Thus, once we condition on the number of short cycles the variance is tiny.
If so, then the limiting distribution of $\ln Z$ is just the limit of
	$\ln\Erw[Z|\cF_\ell]$ as $n,\ell\to\infty$, which is determined by the joint distribution of the number of short cycles.

Due to the combinatorial nature of the regular $k$-SAT problem a direct attempt at proving (\ref{eqVarDecomp2}) leads to fairly unpleasant calculations.
Indeed, {the inherent asymmetry of the Boolean values `true' and `false' causes the formula for the second moment of $Z$ to involve} implicit parameters
that we find tedious to track directly (although it might be possible).
Similar issues arise in other random constraint satisfaction problems as well.
Further, they also appear in the formula for the $k$-SAT threshold in the regular $k$-SAT problem~\cite{kSAT}.

{In this case, we are able to develop a version of the} small subgraph conditioning argument that does not require such extensive calculations. 
To this end, we decompose $Z$ into a sum of contributions that are tractable by fairly simple combinatorial considerations.
Specifically, let $\Sigma=\{\pm1\}^k\setminus\{(-1,\ldots,-1)\}$ be the set of all $2^k-1$ truth value combinations that satisfy a Boolean clause
	(i.e., everything but `all-false').
{Also}, let $\cM(d,k,n)$ be the set of all probability distributions $\mu=(\mu(\sigma))_{\sigma\in\Sigma}$ on $\Sigma$ such that
 $m\mu(\sigma)$ is an integer for all $\sigma\in\Sigma$ and
	\begin{align}\label{eqMuSumsToZero}
	\sum_{\sigma\in\Sigma}\mu(\sigma)\scal{\sigma}{\vecone}=0.
	\end{align}
{(The relevance of this constraint will be made clear.)}  
In addition, define $Z_\mu=Z_\mu(\PHI)$ as the number of truth assignments $\tau:\{x_1,\ldots,x_n\}\to\{\pm1\}$ such that
	\begin{align*}
	\mu(\sigma)&=\frac1m\sum_{i=1}^m\prod_{j=1}^k\vecone\{\sign(i,j)\tau(\partial(i,j))=\sigma_j\}\qquad\mbox{for all }		\sigma\in\Sigma.
	\end{align*}
In words, $Z_\mu$ is the number of satisfying assignments of $\PHI$ such that for each $\sigma\in\Sigma$
precisely $m\mu(\sigma)$ clauses are satisfied according to the ``truth value pattern'' $\sigma$. Since the total number of true literals and false literals are equal,
	all possible distributions on $\Sigma$ {satisfy~(\ref{eqMuSumsToZero}) and} are included in $\cM(d,k,n)$, and thus
	\begin{equation}\label{eqZdecomposition}
	Z=\sum_{\mu\in\cM(d,k,n)}Z_\mu.
	\end{equation}
{We also observe {that} 
the total number of true/false literal occurrences is divisible by $d$ (because (\ref{eqConfModel}) is a bijection).}
{Crucially, (\ref{eqZdecomposition}) decomposes the random variable $Z$, whose value is typically exponential in $n$ for the regime of $d,k$
that we deal with, into a {\em polynomial} number $|\cM(d,k,n)|\leq O(n^{|\Sigma|})$ of summands.}

We are going to apply small subgraph conditioning to the individual random variables $Z_\mu$ rather than $Z$.
The key advantage is that we will be able to evaluate the second moment of $Z_\mu$ almost mechanically by
way of the central limit theorem for random permutations~\cite{Erwin}.

This approach is facilitated by the observation that only a fairly small subset of $\cM(d,k,n)$ contributes to (\ref{eqZdecomposition}) significantly.
In fact, recalling $q$ from (\ref{eqq}), define a probability distribution $\bar\mu$ on $\Sigma$ by letting
	\begin{align}\label{eqbarmu}
	\bar\mu(\sigma)&
		=\frac{(q(1-q))^{k/2}}{1-(1-q)^k}\bcfr{q}{1-q}^{\frac12\sum_{j=1}^k\sigma_j}.
	\end{align}
Further, let $\cM_\omega=\cM_\omega(d,k,n)$ be the set of all $\mu\in\cM(d,k,n)$ such that
$\|\mu-\bar\mu\|_2\leq\omega m^{-1/2}$.
Then our strategy is to show that for any fixed number $\omega>0$
the double limit (\ref{eqVarDecomp2}) with $Z$ replaced by $Z_\mu$ vanishes {uniformly for}
 $\mu\in\cM_\omega$.
In \Sec~\ref{Sec_fm} we calculate the first moments of the random variables $Z_\mu$.

\begin{proposition}\label{Prop_fm}
The first moments satisfy
	\begin{align}\label{eqProp_fm1}
	\Erw[Z]&\sim 
		2^n(1-(1-q)^k)^{m+\frac12}\bc{4q(1-q)}^{-dn-\frac12}=\exp(\Omega(n))
		\qquad\mbox{and}\\
		\lim_{\omega\to\infty}&
		\liminf_{n\to\infty}\sum_{\mu\in\cM_\omega}\frac{\Erw[Z_\mu]}{\Erw[Z]}=1.\label{eqProp_fm2}
	\end{align}
Furthermore, for any $\omega>0$ we have
	\begin{align}\label{eqProp_fm3}
	\limsup_{n\to\infty}\max_{\mu\in\cM_\omega}\abs{\ln\Erw[Z_{\mu}]
		+\ln|\cM_\omega|-\ln\Erw[Z]
		}<\infty.
	\end{align}
\end{proposition}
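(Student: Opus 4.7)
The plan is to derive an explicit combinatorial formula for $\Erw[Z_\mu]$ from the bijection model~(\ref{eqConfModel}), apply Stirling, and then sum over $\mu\in\cM(d,k,n)$ via a local central limit theorem on the affine lattice cut out by the constraint~(\ref{eqMuSumsToZero}). I first fix any truth assignment $\tau_0$ and partition the $2dn$ literal copies $(x,h,s)$ into $dn$ ``true copies'' (those with $s=\tau_0(x)$) and $dn$ ``false copies''. A bijection $\PHI$ realises pattern profile $\mu$ for $\tau_0$ iff (i) the $m$ clauses receive patterns from $\Sigma$ with exact multiplicities $(m\mu(\sigma))_\sigma$ and (ii) the induced slot-to-copy map sends each ``true position'' $(i,j)$ (one with $\sigma_j=+1$ in clause $i$'s pattern) to a true copy and each false position to a false copy. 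The constraint~(\ref{eqMuSumsToZero}) is exactly what makes (ii) consistent by forcing $dn$ true positions. Counting gives $\binom{m}{m\mu(\sigma):\sigma\in\Sigma}\cdot((dn)!)^2$ favourable bijections out of $(2dn)!$, and by symmetry this fraction is independent of $\tau_0$, so
\[
\Erw[Z_\mu] \;=\; 2^n\binom{m}{m\mu(\sigma):\sigma\in\Sigma}\frac{((dn)!)^2}{(2dn)!}.
\]

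Next I identify $\bar\mu$ as the unique entropy maximiser of $H(\mu)=-\sum_\sigma\mu(\sigma)\ln\mu(\sigma)$ on $\cM$. Lagrange multipliers for the two linear constraints force $\bar\mu(\sigma)\propto\eul^{\lambda\scal{\sigma}{\vecone}}$; setting $\eul^\lambda=\sqrt{q/(1-q)}$ and normalising over $\Sigma$ reproduces~(\ref{eqbarmu}), while $\sum_\sigma\bar\mu(\sigma)\scal{\sigma}{\vecone}=0$ reduces to $2q=1-(1-q)^k$, which is~(\ref{eqq}). A short calculation yields $H(\bar\mu)=\ln(1-(1-q)^k)-\tfrac{k}{2}\ln(q(1-q))$. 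Applying Stirling to the exact formula and using that $H$ is stationary at $\bar\mu$ on the constrained affine space with negative-definite Hessian $-\diag(1/\bar\mu(\sigma))$ along the tangent space, a Taylor expansion gives $mH(\mu)=mH(\bar\mu)+O(1)$ uniformly for $\mu\in\cM_\omega$, hence
\[
\ln\Erw[Z_\mu] \;=\; \ln\Erw[Z_{\bar\mu}]+O(1) \qquad\text{uniformly for }\mu\in\cM_\omega.
\]

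For~(\ref{eqProp_fm1}) and~(\ref{eqProp_fm2}) I parametrise $\mu=\bar\mu+\eta/\sqrt m$ with $\eta$ ranging over $\cL:=\{\eta\in\RR^\Sigma:\sum_\sigma\eta(\sigma)=\sum_\sigma\eta(\sigma)\scal{\sigma}{\vecone}=0\}$, a subspace of dimension $|\Sigma|-2$. Admissible $\mu\in\cM$ form a translate of a sublattice of $m^{-1}\ZZ^\Sigma$ of computable covolume inside $\cL$, and the Taylor expansion supplies the Gaussian weight $\exp(-\tfrac12\sum_\sigma\eta(\sigma)^2/\bar\mu(\sigma))$. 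A local central limit theorem / Riemann-sum approximation then converts $\sum_\mu\Erw[Z_\mu]$ into $\Erw[Z_{\bar\mu}]$ times the lattice-weighted Gaussian integral over $\cL$, from which~(\ref{eqProp_fm1}) follows after combining Stirling prefactors. The Gaussian tail over $\{\|\eta\|>\omega\}$ vanishes as $\omega\to\infty$, yielding~(\ref{eqProp_fm2}). Moreover, since $|\cM_\omega|$ is of order $m^{(|\Sigma|-2)/2}$ times the volume of an $\omega$-ball in $\cL$ and $\Erw[Z]\asymp|\cM_\omega|\cdot\Erw[Z_{\bar\mu}]$ up to a constant depending on $\omega$, combining with the previous display delivers~(\ref{eqProp_fm3}).

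The main obstacle I anticipate is the determinant / covolume computation on $\cL$: verifying that the $(|\Sigma|-2)$-dimensional Gaussian integral $\int_\cL\exp(-\tfrac12\eta^\top\diag(1/\bar\mu)\eta)\,d\eta$, together with the lattice spacing in $m^{-1}\ZZ^\Sigma\cap\cL$ and the Stirling prefactors, telescopes to the clean expression $(1-(1-q)^k)^{1/2}(4q(1-q))^{-1/2}$ in~(\ref{eqProp_fm1}). This reduces to a two-dimensional Schur-complement computation for $\diag(1/\bar\mu)$ against the subspace spanned by $\vecone$ and $(\scal{\sigma}{\vecone})_\sigma$, and relies decisively on the defining identity~(\ref{eqq}) for $q$ to produce the factor $4q(1-q)$ and to cancel residual exponential-in-$k$ terms.
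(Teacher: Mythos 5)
Your proposal is correct in substance and, for the parts it completes, ends up doing what the paper does in different clothing. Your exact formula $\Erw[Z_\mu]=2^n\bink{m}{m\mu}\bink{2dn}{dn}^{-1}$ is right and is precisely what the paper extracts via the auxiliary i.i.d.\ vector $\vec\chi$ (Fact~\ref{Fact_fm} combined with Claims~\ref{Claim_B} and~\ref{Lemma_Smu}); your Lagrange-multiplier identification of $\bar\mu$ as the constrained entropy maximiser, with (\ref{eqq}) emerging as the stationarity condition, is a nice substitute for the paper's device of defining $q$ by (\ref{eqq}) and tilting by the product Bernoulli($q$) measure, and since $-\KL{\mu}{\bar\mu}=H(\mu)+\mathrm{const}$ on $\cM$ (both linear constraints kill the $\mu$-dependence of $\sum_\sigma\mu(\sigma)\ln\bar\mu(\sigma)$), your entropy expansion around $\bar\mu$ is literally the paper's expansion (\ref{eqProp_fm_2}); consequently your treatment of (\ref{eqProp_fm3}) (stationarity plus bounded Hessian gives $mH(\mu)=mH(\bar\mu)+O(1)$ on $\cM_\omega$, together with $|\cM_\omega|=\Theta(m^{|\Sigma|/2-1})$) and of (\ref{eqProp_fm2}) (Gaussian tail / strict concavity) matches the paper's. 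The genuine divergence is how the constant in (\ref{eqProp_fm1}) is obtained: the paper computes $\Erw[Z]$ by Bayes' rule in the tilted space, so the only limit theorem needed is a one-dimensional local CLT for the number of true slots given $\cS$, with variance $kmq(1-q)/(1-(1-q)^k)$ (Claim~\ref{Claim_BS}); you instead sum $\Erw[Z_\mu]$ over the whole lattice $\cM$, which forces an $(|\Sigma|-2)$-dimensional lattice Gaussian with its covolume and the Schur complement of $\diag(1/\bar\mu(\sigma))$ against the two constraint directions $\vecone$ and $(\scal{\sigma}{\vecone})_{\sigma}$. That is heavier than necessary: in your own framework $\sum_{\mu\in\cM}\bink m{m\mu}$ simply counts $\{\pm1\}$-arrays with no all-false row and exactly $dn$ entries $+1$, so tilting each entry by $q$ collapses your multidimensional computation to that single scalar variance, and the ``telescoping'' you anticipate to $(1-(1-q)^k)^{1/2}(4q(1-q))^{-1/2}$ does go through.

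Two items are still missing as written. First, the covolume/Schur-complement evaluation is asserted, not performed, and it is exactly what produces the precise constant in (\ref{eqProp_fm1}); either carry it out or use the one-dimensional reduction just described. Second, (\ref{eqProp_fm1}) also claims the expression equals $\exp(\Omega(n))$, which your proposal never touches; this is where the hypothesis (\ref{eqAssumptionOnd}) is used. One needs $q=\frac12-2^{-1-k}+O(k4^{-k})$ and then that the per-variable exponent $\ln 2+\frac{2d}{k}\ln(1-(1-q)^k)-d\ln(4q(1-q))$ is bounded below by a positive constant (depending on $k$) precisely because $2d/k\leq 2^k\ln2-k\ln2/2-4$; cf.\ (\ref{eqFirstMomentFormula2})--(\ref{eqFirstMomentFormula3}). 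Without this step the second equality in (\ref{eqProp_fm1}) is unproved.
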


In addition, we need to work out the covariance of $Z_\mu$ and the cycle counts $C_s$.
As a first step, we study the unconditional distribution of the random variables $C_s$.
For $l\geq1$ and $s=(s_2,\ldots,s_{2l+1})\in\{\pm1\}^{2l}$ define
	\begin{align}\label{eqLambdaDelta}
	\lambda_s&=\frac1{2l}\bcfr{k-1}{2}^l(d(d-1))^{l/2}\bcfr{d-1}{d}^{\frac12\sum_{i=1}^ls_{2i}s_{2i+1}}.
	\end{align}

\begin{proposition}\label{Prop_shortCycles}
Let $S\subset\bigcup_{l\geq1}\{\pm1\}^l$ be a {fixed} finite set of {sign patterns}.
Moreover, let $(c_s)_{s\in S}$ be a fixed family of non-negative integers.
Then
	\begin{align}
	\lim_{n\to\infty}\pr\brk{\forall s\in S:C_s=c_s}&=\prod_{s\in S}\pr\brk{\Po(\lambda_s)=c_s}.
	\end{align}
\end{proposition}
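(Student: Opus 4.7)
The plan is to apply the method of factorial moments. Joint convergence of the integer-valued variables $(C_s)_{s\in S}$ to independent Poissons with means $(\lambda_s)_{s\in S}$ is equivalent to showing, for every family $(c_s)_{s\in S}$ of non-negative integers,
\begin{equation*}
\Erw\brk{\prod_{s\in S}(C_s)^{\underline{c_s}}}\longrightarrow\prod_{s\in S}\lambda_s^{c_s}\quad\text{as }n\to\infty,
\end{equation*}
and this identity will be the sole goal.

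As a warm-up I will first handle the case $|S|=1$, $c_s=1$, i.e.\ prove $\Erw[C_s]\to\lambda_s$ for a single pattern $s=(s_2,\dots,s_{2l+1})$. By linearity, $\Erw[C_s]$ equals the number of tuples $(i_t,j_t)_{t=2}^{2l+1}$ meeting the deterministic constraints CY1, CY2, CY4 multiplied by the probability under $\PHI$ that CY3 and CY5 hold. The enumeration decomposes as follows: $m^{\underline l}/l$ choices for the $l$ distinct clause indices with the smallest one singled out as the root; $\bink{k}{2}(k(k-1))^{l-1}$ choices of slots (the $j_2<j_{2l+1}$ orientation fixing the factor of $1/2$ at the root clause); $n^{\underline l}$ choices of the $l$ distinct variables along the cycle; and at each variable a choice of two literal occurrences carrying the prescribed incident signs, which contributes $d^2$ if the two incident signs differ and $d(d-1)$ if they agree, in total $d^l\prod_{i=1}^l\bc{d-\vecone\{s_{2i}=s_{2i+1}\}}$. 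Since $\PHI$ is a uniform bijection, the probability that it realises any specified $2l$-edge sub-configuration equals $1/(mk)^{\underline{2l}}$. Using $mk=2dn$ together with the identity $2|\{i:s_{2i}=s_{2i+1}\}|-l=\sum_{i=1}^ls_{2i}s_{2i+1}$, asymptotic simplification then yields $\Erw[C_s]=\lambda_s(1+o(1))$.

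For the joint factorial moments, $\prod_{s\in S}(C_s)^{\underline{c_s}}$ counts ordered tuples of pairwise distinct cycles with $c_s$ cycles of pattern $s$ for each $s\in S$. Split this sum into two parts: tuples that are \emph{vertex-disjoint} in the factor graph $G(\PHI)$ (no two cycles share a clause or a variable) and the rest. On vertex-disjoint tuples the counting factorises, the clause, slot, variable and occurrence choices for different cycles being independent; the probability that $\PHI$ realises all of the chosen $2\sum_{s,h}l_s$ edges equals $1/(mk)^{\underline{2\sum_{s,h}l_s}}$, which matches the product of the individual $1/(mk)^{\underline{2l_s}}$ factors up to $1+o(1)$. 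This yields a contribution $\prod_s\lambda_s^{c_s}(1+o(1))$. On non-vertex-disjoint tuples, a standard case analysis\,---\,analogous to the classical Bollob\'as/Wormald argument for cycle counts in random regular graphs\,---\,shows that every shared clause or variable between two cycles suppresses the configuration count by a factor $\Theta(1/n)$ whereas the bijection probability grows only by a bounded factor; summing over the bounded family of overlap patterns therefore yields $o(1)$.

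The delicate step is the bookkeeping for non-disjoint tuples. One must check that every possible coincidence between cycles (a shared single clause, a shared single variable, a shared incident clause--variable edge, or longer shared sub-paths), and also all possible multi-cycle coincidences, strictly reduce the number of configurations by at least a power of $n^{-1}$ which is not offset by the change in $(mk)^{\underline{\cdot}}$. The signed bijection model adds only minor overhead compared to the unsigned regular-graph case, because the sign constraints at each shared vertex are essentially independent of the overlap combinatorics and contribute only bounded multiplicative factors.
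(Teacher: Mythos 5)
Your proposal is correct and takes essentially the same route as the paper: convergence of the joint factorial moments to $\prod_{s\in S}\lambda_s^{c_s}$ (the paper's \Lem~\ref{Lemma_cycleMoments}) combined with the standard Poisson method-of-moments result, with $\Erw[C_s]\to\lambda_s$ obtained by the same configuration-model count and with vertex-disjoint families factorising. The only difference is cosmetic: where you invoke a case analysis for overlapping cycles, the paper disposes of all such tuples at once via Claim~\ref{Fact_unicyclic} (the expected number of small vertex sets spanning more edges than vertices is $O(1/n)$), which applies because the union of two distinct intersecting cycles always has at least one more edge than vertices.
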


\noindent
Further, for $l\geq1$ and $s=(s_2,\ldots,s_{2l+1})\in\{\pm1\}^{2l}$ let
	\begin{align}\label{eqMatrices}
	M_1&=\frac{1}{q}\begin{pmatrix}
		q^2&(1-q)^2\\
		q^2&q^2
		\end{pmatrix},&
			M_{-1}&=\frac1{1-q}
					\begin{pmatrix}
				(1-q)^2&(1-q)^2\\q^2&(1-q)^2
				\end{pmatrix},&
		\delta_s&=-1+\tr\prod_{i=1}^l M_{s_{2i}s_{2i+1}}.
	\end{align}
Since 
	\begin{align}\label{eqMatricesEigenvectors}
	M_1\bink{1-q}{q}&=M_{-1}\bink{1-q}q=\bink{1-q}q,&
	M_{1}\bink{q-1}q&=-M_{-1}\bink{q-1}q=(2q-1)\bink{q-1}{q},
	\end{align}
we obtain
	\begin{align}\label{eqMatricesEigenvectors2} 
	\delta_s&=(-1)^{\sum_{i=1}^l(1-s_{2i}s_{2i+1})/2}(2q-1)^l.
	\end{align}

\begin{proposition}\label{Prop_shortCyclesConditional}
Let $S\subset\bigcup_{l\geq1}\{\pm1\}^l$ be a finite set, let $(c_s)_{s\in S}$ be a family of non-negative integers and let $\omega>0$.
Then
	\begin{align}\label{eqProp_shortCyclesConditional}
	\lim_{n\to\infty}
	\max_{\mu\in\cM_\omega}\abs{
	\frac{\Erw[Z_{\mu}\vecone\{\forall s\in S:C_{s}=c_{s}\}]}{\Erw[Z_{\mu}]}-\prod_{s\in S}\pr\brk{\Po((1+\delta_{s})\lambda_{s})=c_{ s}}}=0.
	\end{align}
Moreover, $\delta_s>-1$ for all $s$, $(2d-1)(k-1)(1-4q(1-q))<1$ and
	\begin{equation}\label{eqNicksSumIsFinite}
	\sum_{l\geq1}\sum_{s\in\{\pm1\}^l}\lambda_s\delta_s^2=-\frac12\ln\bc{1-(2d-1)(k-1)(1-4q(1-q))}.
	\end{equation}
\end{proposition}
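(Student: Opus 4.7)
The plan is to establish the Poisson convergence in (\ref{eqProp_shortCyclesConditional}) by the method of joint factorial moments, and then to read off the remaining two inequalities together with the identity (\ref{eqNicksSumIsFinite}) from the algebraic structure of $\lambda_s$ and $\delta_s$. Concretely, I aim to show that for every fixed finite $S$ and every family of non-negative integers $(c_s)_{s\in S}$, the quantity $\Erw[Z_\mu\prod_{s\in S}(C_s)^{\underline{c_s}}]/\Erw[Z_\mu]$ converges to $\prod_{s\in S}((1+\delta_s)\lambda_s)^{c_s}$ uniformly for $\mu\in\cM_\omega$. Since the $C_s$ are non-negative integer variables, joint convergence of all mixed factorial moments to those of independent Poissons with means $(1+\delta_s)\lambda_s$ implies joint convergence in distribution, so (\ref{eqProp_shortCyclesConditional}) follows by inclusion-exclusion over $S$.

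To compute the factorial moment, I expand $Z_\mu=\sum_\tau\vecone\{\tau\text{ has clause-type profile }\mu\}$ and interchange sums, reducing the task to a planted-model calculation: for each $\tau$ of statistic $\mu$, enumerate ordered tuples of pairwise disjoint cycles of the prescribed lengths and sign patterns in a uniformly random $\PHI$ that admits $\tau$ as a satisfying assignment of profile $\mu$. Under this planted measure, the bijection structure (\ref{eqConfModel}) together with $\|\mu-\bar\mu\|_2=O(m^{-1/2})$ implies that the fraction of clause slots carrying a true (respectively false) literal under $\tau$ tends uniformly to $q$ (respectively $1-q$), where $q$ solves (\ref{eqq}). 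Hence the probability that a specific sequence $(i_2,j_2),\ldots,(i_{2l+1},j_{2l+1})$ forms a cycle of sign pattern $s$ factorises into a product of $2l$ local transition probabilities whose entries are precisely those of $M_{\pm1}$ from (\ref{eqMatrices}); the denominators $q$ and $1-q$ encode the marginal probability that a slot is true or false. Summing over admissible cycle placements yields the leading-order term $\lambda_s\cdot\tr\prod_i M_{s_{2i}s_{2i+1}}=\lambda_s(1+\delta_s)$, and joint factorial moments factorise across distinct cycles because overlap configurations account for only an $o(1)$ fraction of placements.

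The remaining assertions then follow quickly. From the eigendecomposition (\ref{eqMatricesEigenvectors}), both $M_1$ and $M_{-1}$ share the right eigenvector $\bink{1-q}q$ with eigenvalue $1$ and the right eigenvector $\bink{q-1}q$ with respective eigenvalues $\pm(2q-1)$, which gives (\ref{eqMatricesEigenvectors2}) and hence $\delta_s=\pm(2q-1)^l$; since (\ref{eqq}) forces $q\in(0,1/2)$ for $k\ge k_0$, we have $|2q-1|<1$ and thus $\delta_s>-1$. Using $1-4q(1-q)=(2q-1)^2=(1-q)^{2k}$ by (\ref{eqq}) and $q\to 1/2$ as $k\to\infty$, the product $(2d-1)(k-1)(1-q)^{2k}$ is of order $k^2\cdot 2^{-k}=o(1)$ in the regime (\ref{eqAssumptionOnd}), giving the strict inequality. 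Finally, for (\ref{eqNicksSumIsFinite}) I exploit that the $l$ pairs $(s_{2i},s_{2i+1})$ in $s\in\{\pm1\}^{2l}$ are disjoint, so $\sum_{s\in\{\pm 1\}^{2l}}\bcfr{d-1}{d}^{\frac12\sum s_{2i}s_{2i+1}}$ factorises across pairs; a short calculation then gives $\sum_{s\in\{\pm1\}^{2l}}\lambda_s\delta_s^2=\frac{1}{2l}((k-1)(2d-1)(1-4q(1-q)))^l$, and the Taylor expansion $\sum_{l\ge 1}\frac{x^l}{2l}=-\frac12\ln(1-x)$ yields the closed form.

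The main obstacle is the uniform-in-$\mu$ planted-model cycle computation in the second paragraph. One must verify that the local transition probabilities along a cycle, averaged against the planted distribution with profile $\mu$ close to $\bar\mu$, converge to the entries of the $\mu$-independent matrices $M_{\pm1}$ with an error bound that is uniform for $\mu\in\cM_\omega$ and remains $o(1)$ after summation over the $2l$ cycle edges. This requires a careful switching argument that exploits $\bar\mu$ being a stationary point of the first-moment functional identified in Proposition~\ref{Prop_fm}, so that an $O(m^{-1/2})$ deviation of $\mu$ from $\bar\mu$ produces only a correspondingly small correction in the cycle transition statistics.
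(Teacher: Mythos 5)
Your proposal is correct and follows essentially the same route as the paper: a planted (size-biased) model in which cycle probabilities factor into products of the $2\times2$ matrices $M_{\pm1}$ whose trace gives $1+\delta_s$, Poisson convergence via joint factorial moments of the $C_s$, and uniformity over $\cM_\omega$ coming from $\|\mu-\bar\mu\|_2=O(m^{-1/2})$ — the very point you flag as the main obstacle is handled in the paper by \Lem~\ref{Lemma_muCycles}, which shows via a Kullback-Leibler estimate that conditioning on the exact profile $\mu$ leaves the cycle statistics unchanged. Your closing algebra ($\delta_s>-1$ from $q\in(0,\frac12)$, the bound $(2d-1)(k-1)(1-4q(1-q))<1$ from $1-4q(1-q)=(1-q)^{2k}$, and the factorised evaluation of $\sum_{l,s}\lambda_s\delta_s^2$ followed by $\sum_{l\geq1}x^l/(2l)=-\frac12\ln(1-x)$) matches the paper's concluding lemma.
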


\noindent
The proofs of \Prop s~\ref{Prop_shortCycles} and~\ref{Prop_shortCyclesConditional}  can be found in \Sec~\ref{Sec_cycles}.
Finally, in \Sec~\ref{Sec_sm} we establish the following bound on the second moments of the $Z_\mu$ .

\begin{proposition}\label{Prop_sm}
For any $\omega>0$ we have
	\begin{align*}
	\limsup_{n\to\infty}\max_{\mu\in\cM_\omega}{\Erw[Z_{\mu}^2]}/{\Erw[Z_{\mu}]^2}
		\leq\bc{1-(2d-1)(k-1)(1-4q(1-q))}^{-1/2}.
	\end{align*}
\end{proposition}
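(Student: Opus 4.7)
The overall strategy is to compute $\Erw[Z_{\mu}^2]$ by applying the central limit theorem for random permutations to the bijection model~(\ref{eqConfModel}), and to recognize the resulting Gaussian correction via the identity~(\ref{eqNicksSumIsFinite}). That identity rewrites the claimed bound as $\exp\bc{\sum_{l\geq1}\sum_s\lambda_s\delta_s^2}$, which is precisely the correction factor one expects from short-cycle fluctuations, consistent with the small subgraph conditioning picture of \Sec~\ref{Sec_overview}.

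\emph{Step 1: decomposition.} Expand
$$\Erw[Z_\mu^2]=\sum_{\tau_1,\tau_2}\pr[\PHI\mbox{ has pattern }\mu\mbox{ under both }\tau_1\mbox{ and }\tau_2].$$
For fixed $\tau_1,\tau_2$, parametrize by the \emph{overlap} $\nu\in\cP(\{\pm1\}^2)$ giving the fractions of variables with each joint truth value, and by a \emph{joint clause pattern} $\rho\in\cP(\Sigma\times\Sigma)$ whose two marginals both equal $\mu$. Grouping yields $\Erw[Z_\mu^2]=\sum_{\nu,\rho}N(\nu)\,P(\rho\mid\nu)$, where $N(\nu)$ is the multinomial count of assignment pairs with overlap $\nu$ and $P(\rho\mid\nu)$ is the probability that the uniform bijection $\PHI$ pairs slots so as to produce joint clause pattern $\rho$, given the slot-type counts induced by $\nu$.

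\emph{Step 2: saddle point via the CLT for random permutations.} Write $P(\rho\mid\nu)$ as a ratio of multinomial counts indexed by slot types on the variable side and clause side. The CLT of~\cite{Erwin} then gives an asymptotic Gaussian density for $P(\rho\mid\nu)$. A standard Laplace argument identifies the unique dominant saddle at $\nu^{*}=(1/4,1/4,1/4,1/4)$ (i.e., independent $\tau_1,\tau_2$) and $\rho^{*}=\mu\otimes\mu$, and at this saddle the exponential part matches $\Erw[Z_\mu]^2$ from \Prop~\ref{Prop_fm}. Setting $\nu=\nu^{*}+x/\sqrt n$ and $\rho=\rho^{*}+y/\sqrt m$, the ratio $\Erw[Z_\mu^2]/\Erw[Z_\mu]^2$ therefore reduces, up to $1+o(1)$, to a Gaussian integral in $(x,y)$.

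\emph{Step 3: identifying the determinant.} The Gaussian integral evaluates to $\det(I-K)^{-1/2}$ for a matrix $K$ built from the covariance structure of the slot-assignment process. The main obstacle is identifying $K$ and matching its series expansion with~(\ref{eqNicksSumIsFinite}): one needs $\tr(K^l)$ to be expressible as a sum over length-$l$ sign patterns so that $-\frac12\ln\det(I-K)=\sum_{l\geq1}\sum_s\lambda_s\delta_s^2$. This is where the matrices $M_{\pm1}$ from~(\ref{eqMatrices}) should enter naturally: I expect $K$ to factor into a variable-side and a clause-side block whose product carries the spectral structure of~(\ref{eqMatricesEigenvectors}), so that $\tr(K^l)$ becomes a weighted sum of traces $\tr\prod_i M_{s_{2i}s_{2i+1}}$ matching~(\ref{eqNicksSumIsFinite}) term by term. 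The inequality $(2d-1)(k-1)(1-4q(1-q))<1$ from \Prop~\ref{Prop_shortCyclesConditional} guarantees $\det(I-K)>0$ and hence the bound is finite. Uniformity over $\mu\in\cM_\omega$ is immediate because the Hessian at $\rho^{*}=\mu\otimes\mu$ depends on $\mu$ only through $\bar\mu$ to leading order, and the deviation $\|\mu-\bar\mu\|_2\leq\omega m^{-1/2}$ is too small to affect the $O(1)$ prefactor.
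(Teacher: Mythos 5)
There is a genuine gap, in two places. First, in Step 2 you assert that ``a standard Laplace argument identifies the unique dominant saddle'' at $\nu^{*}=(1/4,\ldots,1/4)$. That is precisely the hard part of any second moment argument for random $k$-SAT and it is not standard: one has to rule out competing local maxima of the exponential rate at overlaps far from $1/2$, and this is exactly where the hypothesis (\ref{eqAssumptionOnd}) on $d$ enters. In the paper this is the content of \Lem~\ref{Lemma_sm}, which rests on \Lem~\ref{Lemma_Kosta} (proved in Appendix~\ref{Sec_Kosta} by introducing the auxiliary product measure $\vec\chi^\tensor$, the implicit parameters of \Lem~\ref{Lemma_parameterShift}, and a careful analysis of the function $g(\rho_{1,1})=H(\rho)+\frac{2d}k\ln(1-2^{1-k}+\rho_{1,1}^k)$, whose stationary points $x_1,x_2$ must be located and compared to the boundary). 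Without this tail analysis your sum over $(\nu,\rho)$ is not controlled, and the claimed inequality (note it is only an upper bound precisely because the tail contributes nonnegatively) is not established. Second, Step 3 is a hope rather than an argument: the matrix $K$ is never defined, the asserted factorization into variable-side and clause-side blocks is not exhibited, and the identity $-\frac12\ln\det(I-K)=\sum_{l,s}\lambda_s\delta_s^2$ is not verified. You would also have to handle the constrained Gaussian integral over the joint-pattern fluctuations $y$ with both marginals pinned to $\mu$, and show it cancels exactly against the corresponding normalization hidden in $\Erw[Z_\mu]^2$; nothing in your sketch addresses this.

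It is worth noting that the paper avoids your Step 3 altogether. Conditioning on the event $\cS_\mu^\tensor$ that both coordinates have clause pattern $\mu$, it applies Bolthausen's CLT for random permutations~\cite{Erwin} to the single scalar $A$ (the number of slots where both assignments are true), computes $\Erw[A]$ and $\Var[A]\sim\nu^2m$ by hand (\Lem~\ref{Lemma_perm_1}), and then the central contribution reduces to a one-dimensional Gaussian summation over the overlap $\alpha$ (\Lem s~\ref{Lemma_perm} and~\ref{Lemma_centre}), which yields $\bc{1-(2d-1)(k-1)(1-4q(1-q))}^{-1/2}$ directly; the matching with $\sum_{l,s}\lambda_s\delta_s^2$ via (\ref{eqNicksSumIsFinite}) is only invoked later, when \Prop~\ref{Prop_sm} is combined with the cycle counts in \Lem~\ref{Thm_sscFamily}. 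Your determinant-and-trace route could in principle be made rigorous (it is the classical way small subgraph conditioning variance computations are organized), but as written the two essential ingredients --- the far-overlap bound under (\ref{eqAssumptionOnd}) and the explicit identification of $K$ with the spectral data of $M_{\pm1}$ --- are missing, so the proposal does not yet constitute a proof.
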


We now derive \Thm~\ref{Thm_SAT} from \Prop s~\ref{Prop_fm}--\ref{Prop_sm}.
Basically, we are going to argue that the variance of the random variables $Z_\mu$ comes almost entirely from the {variation in their expected values conditional upon} $C_s${, as described at~(\ref{eqVarDecomp})}.
Although we do not use any technical statements from those papers directly, the argument {an adaptation of} 
	 conditioning from~\cite{Janson,MRRW,RobinsonWormald}
{to the present context, which has} one critical twist: instead of working with a single random variable $Z$,
we need to control all the random variables $Z_\mu$ with $\mu\in\cM_\omega$ for a fixed $\omega>0$ simultaneously.
In fact, ultimately we are going to have to take the limit $\omega\to\infty$ as well.
Recalling that $\cF_\ell$ is the $\sigma$-algebra generated by the random variables $C_s$ with $s\in\bigcup_{l\leq\ell}\{\pm1\}^{2l}$,
we begin with the following bound.

\begin{lemma}\label{Thm_sscFamily}
For any $\omega>0$ we have
	\begin{align*}
	\lim_{\ell\to\infty}\limsup_{n\to\infty}\max_{\mu\in\cM_\omega}\Erw\brk{\frac{\Erw[Z_\mu^2|\cF_{\ell}]-\Erw[Z_\mu|\cF_{\ell}]^2}{\Erw[Z_\mu]^2}}=0.
	\end{align*}
\end{lemma}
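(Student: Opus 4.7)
The plan is to carry out the classical small-subgraph conditioning variance calculation of Robinson and Wormald~\cite{RobinsonWormald} (see also Janson~\cite{Janson}) separately for each $Z_\mu$ with $\mu\in\cM_\omega$, taking care to preserve uniformity in $\mu$ throughout. Starting from the decomposition
\begin{align*}
\Erw\brk{\frac{\Erw[Z_\mu^2|\cF_\ell]-\Erw[Z_\mu|\cF_\ell]^2}{\Erw[Z_\mu]^2}}=\frac{\Erw[Z_\mu^2]}{\Erw[Z_\mu]^2}-\Erw\brk{\frac{\Erw[Z_\mu|\cF_\ell]^2}{\Erw[Z_\mu]^2}},
\end{align*}
the strategy is to show that both terms on the right-hand side converge, as $n$ and then $\ell$ tend to infinity and uniformly in $\mu\in\cM_\omega$, to $R:=(1-(2d-1)(k-1)(1-4q(1-q)))^{-1/2}$. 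The upper bound $\limsup_n\max_\mu\Erw[Z_\mu^2]/\Erw[Z_\mu]^2\leq R$ is exactly \Prop~\ref{Prop_sm}.

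The matching lower bound on $\Erw[(\Erw[Z_\mu|\cF_\ell])^2]/\Erw[Z_\mu]^2$ is obtained from Bayes's rule. Writing $S=\bigcup_{l\leq\ell}\{\pm1\}^{2l}$, for any fixed $(c_s)_{s\in S}\in\ZZpos^S$,
\begin{align*}
\frac{\Erw[Z_\mu\mid\forall s\in S:C_s=c_s]}{\Erw[Z_\mu]}=\frac{\Erw[Z_\mu\vecone\{\forall s:C_s=c_s\}]/\Erw[Z_\mu]}{\pr[\forall s:C_s=c_s]},
\end{align*}
and by \Prop s~\ref{Prop_shortCycles} and~\ref{Prop_shortCyclesConditional} the right-hand side converges, uniformly in $\mu\in\cM_\omega$, to $\prod_{s\in S}e^{-\lambda_s\delta_s}(1+\delta_s)^{c_s}$. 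Squaring this ratio, multiplying by $\pr[\forall s:C_s=c_s]\to\prod_{s}\pr[\Po(\lambda_s)=c_s]$, summing over $(c_s)\in\ZZpos^S$ and evaluating the Poisson moment generating function $\Erw[(1+\delta_s)^{2\Po(\lambda_s)}]=\exp(\lambda_s[(1+\delta_s)^2-1])=\exp(2\lambda_s\delta_s+\lambda_s\delta_s^2)$ against the factor $e^{-2\lambda_s\delta_s}$ yields
\begin{align*}
\lim_{n\to\infty}\Erw\brk{\frac{\Erw[Z_\mu|\cF_\ell]^2}{\Erw[Z_\mu]^2}}=\prod_{s\in S}\exp(\lambda_s\delta_s^2)=\exp\bc{\sum_{s\in S}\lambda_s\delta_s^2},
\end{align*}
which by (\ref{eqNicksSumIsFinite}) tends to $R$ as $\ell\to\infty$.

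The two bounds then sandwich the variance ratio, forcing it to $R-R=0$, which proves the lemma. The main obstacle is making the interchange of the infinite sum over $\ZZpos^S$ with the $n\to\infty$ limit and the $\max$ over $\mu\in\cM_\omega$ rigorous, since \Prop~\ref{Prop_shortCyclesConditional} provides only termwise convergence for each fixed $(c_s)$. I handle this by truncating to a finite window $\{(c_s):c_s\leq N\}$ on which the uniformity in $\mu$ from \Prop~\ref{Prop_shortCyclesConditional} gives true uniform convergence, then controlling the tail via Jensen's inequality $\Erw[(\Erw[Z_\mu|\cF_\ell])^2]\leq\Erw[Z_\mu^2]$ together with the uniform bound in \Prop~\ref{Prop_sm}; finally sending $N\to\infty$. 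Uniformity over $\cM_\omega$ (whose cardinality is polynomial in $n$, so that pointwise convergence in $\mu$ would not suffice) comes for free because every invoked proposition is stated with $\max_{\mu\in\cM_\omega}$.
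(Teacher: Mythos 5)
Your proposal is correct and follows essentially the same route as the paper: both bound $\Erw[\Erw[Z_\mu|\cF_\ell]^2]$ from below by restricting to bounded cycle counts (your window $c_s\leq N$ is the paper's $\Gamma(\ell,B)$), apply \Prop s~\ref{Prop_shortCycles} and~\ref{Prop_shortCyclesConditional} uniformly over $\cM_\omega$, sum the resulting Poisson expressions to approach $\exp(\sum_s\lambda_s\delta_s^2)$, and sandwich against the upper bound of \Prop~\ref{Prop_sm} via (\ref{eqNicksSumIsFinite}). The only cosmetic difference is that you assert an exact limit for the conditional second moment at fixed $\ell$, whereas only the truncated lower bound is needed (and is all the paper proves); this does not affect the validity of the sandwich argument.
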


\begin{proof}
Spelled out in detail, we aim to prove that
	\begin{align*}
	\forall\eps>0\,\exists \ell_0=\ell_0(\eps)>0\,\forall\ell>\ell_0\,\exists n_0=n_0(\eps,\ell)>0\,\forall n>n_0,\mu\in\cM_\omega:
	\Erw\brk{\Erw[Z_\mu^2|\cF_{\ell}]-\Erw[Z_\mu|\cF_{\ell}]^2}< \eps\Erw[Z_\mu]^2.
	\end{align*}
For $\ell\geq1$ and $B>0$ let $\Gamma(\ell,B)$ be the set of all families $c=(c_s)_{s\in\bigcup_{l\leq\ell}\{\pm1\}^{2l}}$ of integers $0\leq c_s\leq B$.
By \Prop s~\ref{Prop_shortCycles} and~\ref{Prop_shortCyclesConditional}
	for any $\eps>0$ we can choose $B=B(\eps)>0$, $\ell_0(\eps)>0$ large enough such that
	for any $\ell\geq\ell_0(\eps)$ for large enough $n\geq n_0(\eps,\ell,B)$ all $\mu\in\cM_\omega$ satisfy {the following (the first is by definition):} 
	\begin{align}
	\Erw[\Erw[Z_\mu|\cF_{\ell}]^2]&
		\geq\sum_{c\in\Gamma(\ell,B)}	
			\prod_{l=1}^\ell\prod_{s\in\{\pm1\}^{2l}}
				\frac{\Erw[Z_\mu{\vecone\{\forall l\leq\ell,s\in\{\pm1\}^{2l}:C_s=c_s\}}]^2}{\pr\brk{\forall l\leq\ell,s\in\{\pm1\}^{2l}:C_s=c_s}}\nonumber\\
			&\geq\exp(-\eps^2)\Erw[Z_\mu]^2
				\sum_{c\in\Gamma(\ell,B)}		\prod_{l=1}^\ell\prod_{s\in\{\pm1\}^{2l}}
				\frac{\pr[\Po((1+\delta_s)\lambda_s)=c_s]^2}{\pr[\Po(\lambda_s)=c_s]}\nonumber\\
			&=\exp(-\eps^2)\Erw[Z_\mu]^2\sum_{c\in\Gamma(\ell,B)}\prod_{l=1}^\ell\prod_{s\in\{\pm1\}^{2l}}
				\frac{((1+\delta_s)\lambda_s)^{2c_s}}{c_s!\lambda_s^{c_s}\exp({2(1+\delta_s)\lambda_s-\lambda_s})}\nonumber\\
			&{=\exp(-\eps^2)\Erw[Z_\mu]^2\prod_{l=1}^\ell\prod_{s\in\{\pm1\}^{2l}}
				\exp(-2(1+\delta_s)\lambda_s+\lambda_s)\sum_{j=0}^{B}\frac{ (1+\delta_s)^{2j}\lambda_s ^{j}}{j!}
				}\nonumber\\
		&\geq\Erw[Z_\mu]^2\exp\brk{-2\eps^2+\sum_{l\geq1}\sum_{s\in\{\pm1\}^{2l}}\delta_s^2\lambda_s}.
	\label{eqThm_sscFamily666}
	\end{align}
{The last step here uses the fact that the number of possible  $\lambda_s$, as defined in~(\ref{eqLambdaDelta}), is   bounded for fixed $k$, $d$ and $l$.
	} As
	$\Erw[Z_\mu^2]=\Erw[\Erw[Z_\mu^2|\cF_{\ell}]]
		=\Erw[\Erw[Z_\mu^2|\cF_{\ell}]-\Erw[Z_\mu|\cF_{\ell}]^2]
			+\Erw[\Erw[Z_\mu|\cF_{\ell}]^2],$
\Prop~\ref{Prop_sm} and (\ref{eqThm_sscFamily666}) imply that for large enough $\ell,n$ and all $\mu\in\cM_\omega$ we have
	\begin{align*}
	\Erw\brk{\Erw[Z_\mu^2|\cF_{\ell,n}]-\Erw[Z_\mu|\cF_{\ell,n}]^2}\leq\eps\Erw[Z_\mu]^2,
	\end{align*}
as desired.
\end{proof}

\begin{corollary}\label{Lemma_ourApproximation}
For any $\alpha>0$ we have
	$\lim_{\ell\to\infty}\limsup_{n\to\infty}\pr\brk{|Z-\Erw[Z|\cF_{\ell}]|>\alpha \Erw[Z]}=0.$
\end{corollary}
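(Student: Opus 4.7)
The plan is to derive the corollary from Lemma~\ref{Thm_sscFamily} via the decomposition (\ref{eqZdecomposition}), using (\ref{eqProp_fm2}) of Proposition~\ref{Prop_fm} to discard contributions from $\mu\notin\cM_\omega$. Since $\Erw[Z|\cF_\ell]=\sum_\mu \Erw[Z_\mu|\cF_\ell]$ by linearity, the triangle inequality yields
\[\Erw\brk{|Z-\Erw[Z|\cF_\ell]|}\leq \sum_{\mu\in\cM_\omega}\Erw\brk{|Z_\mu-\Erw[Z_\mu|\cF_\ell]|}+\sum_{\mu\notin\cM_\omega}\Erw\brk{|Z_\mu-\Erw[Z_\mu|\cF_\ell]|}.\]
I would then control each of the two sums separately and apply Markov.

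For the first sum, Cauchy--Schwarz gives $\Erw\brk{|Z_\mu-\Erw[Z_\mu|\cF_\ell]|}\leq\sqrt{\Erw\brk{\Erw[Z_\mu^2|\cF_\ell]-\Erw[Z_\mu|\cF_\ell]^2}}$, which by Lemma~\ref{Thm_sscFamily} is at most $\sqrt{\eps}\,\Erw[Z_\mu]$ uniformly in $\mu\in\cM_\omega$ for any prescribed $\eps>0$, provided $\ell$ is large enough and then $n$ is large enough. Summing and using $\sum_\mu\Erw[Z_\mu]=\Erw[Z]$ bounds the first sum by $\sqrt{\eps}\,\Erw[Z]$. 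For the second sum, the crude triangle bound $\Erw\brk{|Z_\mu-\Erw[Z_\mu|\cF_\ell]|}\leq 2\Erw[Z_\mu]$ together with (\ref{eqProp_fm2}) yields a bound of the form $2(1-c_\omega(n))\Erw[Z]$, where $\liminf_{n\to\infty}c_\omega(n)\to 1$ as $\omega\to\infty$.

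Combining these bounds and applying Markov's inequality, given $\alpha,\eta>0$, I would first fix $\omega$ large enough that $\limsup_{n\to\infty}2\sum_{\mu\notin\cM_\omega}\Erw[Z_\mu]/\Erw[Z]<\alpha\eta/2$; with this $\omega$ held fixed, Lemma~\ref{Thm_sscFamily} lets me pick $\ell_0$ so that $\sqrt{\eps}/\alpha<\eta/2$ for all $\ell\geq\ell_0$ and $n$ sufficiently large, which bounds $\pr[|Z-\Erw[Z|\cF_\ell]|>\alpha\Erw[Z]]$ by $\eta$. The main subtlety, and the reason the $Z_\mu$-decomposition is essential rather than a direct conditioning argument for $Z$, is precisely this ordering of limits: Lemma~\ref{Thm_sscFamily} only furnishes uniform control over $\mu\in\cM_\omega$ for \emph{fixed} $\omega$, and (\ref{eqProp_fm2}) is exactly the ingredient needed to choose $\omega$ before $\ell$, by guaranteeing that a single fixed $\omega$ captures almost all the first-moment mass of $Z$ uniformly in $n$.
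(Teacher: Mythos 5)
Your proof is correct, but it takes a cleaner route than the paper's own argument, and it is worth noting the difference. The paper does not bound $\Erw\brk{|Z_\mu-\Erw[Z_\mu|\cF_\ell]|}$ directly; instead it truncates, setting $X_\mu=|Z_\mu-\Erw[Z_\mu|\cF_\ell]|\vecone\{|Z_\mu-\Erw[Z_\mu|\cF_\ell]|>\alpha\Erw[Z_\mu]\}$, bounds $\Erw[X_\mu|\cF_\ell]$ by a layer-cake decomposition plus Chebyshev's inequality as $O(\Var[Z_\mu|\cF_\ell]/(\alpha\Erw[Z_\mu]))$, and then — because this bound carries a factor $1/\Erw[Z_\mu]$ rather than $\Erw[Z_\mu]$ — it must invoke the additional comparability estimate (\ref{eqProp_fm3}) of \Prop~\ref{Prop_fm} (all $\Erw[Z_\mu]$, $\mu\in\cM_\omega$, are within a constant factor $\gamma$ of $\Erw[Z]/|\cM_\omega|$) before summing over $\mu$ and applying Markov; the contribution of $\mu\notin\cM_\omega$ is then absorbed via (\ref{eqChoiceOfOmega}). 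Your Cauchy--Schwarz step converts the uniform relative variance bound of \Lem~\ref{Thm_sscFamily} directly into $\Erw\brk{|Z_\mu-\Erw[Z_\mu|\cF_\ell]|}\leq\sqrt\eps\,\Erw[Z_\mu]$, so summing only needs $\sum_{\mu\in\cM_\omega}\Erw[Z_\mu]\leq\Erw[Z]$ and (\ref{eqProp_fm2}); you never need (\ref{eqProp_fm3}), the truncation, or the comparability of the first moments. Both arguments hinge on the same essential point, which you identify correctly: $\omega$ must be fixed (via \Prop~\ref{Prop_fm}) before $\ell$ is chosen via \Lem~\ref{Thm_sscFamily}, and $n\to\infty$ last; your quantifier ordering ($\omega=\omega(\alpha,\eta)$, then $\ell_0$, then $n_0$) is sound, so the conclusion $\limsup_{n\to\infty}\pr\brk{|Z-\Erw[Z|\cF_\ell]|>\alpha\Erw[Z]}\leq\eta$ for all $\ell\geq\ell_0$ indeed yields the stated double limit. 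In short: correct, slightly more economical than the paper, at no loss of uniformity.
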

\begin{proof}
\Prop~\ref{Prop_fm} shows that for any $\alpha>0$ there is $\omega>0$ such that
	\begin{align}\label{eqChoiceOfOmega}
	\liminf_{n\to\infty}\sum_{\mu\in\cM_\omega}\frac{\Erw[Z_\mu]}{\Erw[Z]}>1-\alpha^2.
	\end{align}
Pick a small $\eps=\eps(\alpha,\omega)$.
By \Lem~\ref{Thm_sscFamily} we can choose $\ell=\ell(\alpha,\eps,\omega)$  large enough
such that for large $n$ all $\mu\in \cM_\omega$ satisfy
	\begin{align}\label{eqCondVarBound}
	\Erw\brk{\Erw[Z_\mu^2|\cF_{\ell}]-\Erw[Z_\mu|\cF_{\ell}]^2}<\eps{\Erw[Z_\mu]^2}.
	\end{align}
Now  define
	\begin{align*}
	X_\mu&=|Z_\mu- \Erw[Z_\mu|\cF_\ell]|\vecone\{|Z_\mu- \Erw[Z_\mu|\cF_\ell]|>\alpha\Erw[Z_\mu]\},&
		X&=\sum_{\mu\in\cM_\omega}X_\mu.
	\end{align*}
Then
	\begin{align}\label{eqXNotTooSmall}
	X<\alpha\sum_{\mu\in\cM_\omega}\Erw[Z_\mu]&\Rightarrow\abs{\sum_{\mu\in\cM_\omega}Z_\mu-\Erw[Z_\mu|\cF_\ell]}
		\leq 2\alpha\sum_{\mu\in\cM_\omega}\Erw[Z_\mu].
	\end{align}
Furthermore, 
using Chebyshev's inequality at the step introducing the variance, 
	\begin{align*}
	\Erw[X_\mu|\cF_\ell]
	&\leq
	\sum_{j\geq0}2^{j+1}\alpha\Erw[Z_\mu]\pr\brk{X_\mu>2^j\alpha\Erw[Z_\mu]}\\
	&\leq
	\sum_{j\geq0}2^{j+1}\alpha\Erw[Z_\mu]\pr\brk{|Z_\mu-\Erw[Z_\mu|\cF_\ell]|>2^j\alpha\Erw[Z_\mu]}\\
	&\leq	
	\sum_{j\geq0} \frac{ \Var[Z_\mu|\cF_\ell]}{2^{j-1}\alpha\Erw[Z_\mu]}
	 \leq\frac{4\Var[Z_\mu|\cF_\ell]}{\alpha\Erw[Z_\mu]}.
	\end{align*}
Hence,
	\begin{align}\label{eqErwX}
	\Erw[X|\cF_\ell]&\leq\frac4\alpha\sum_{\mu\in\cM_\omega}\frac{\Var[Z_\mu|\cF_\ell]}{\Erw[Z_\mu]}
		=\frac4\alpha\Erw[Z]\sum_{\mu\in\cM_\omega}\frac{\Var[Z_\mu|\cF_\ell]}{\Erw[Z_\mu]^2}\frac{\Erw[Z_\mu]}{\Erw[Z]}.
	\end{align}
Further, by \Prop~\ref{Prop_fm}
	there is a number 
	{$\gamma=\gamma(\omega)$} such that $\Erw[Z_\mu]/\Erw[Z]\leq\gamma/|\cM_\omega|$ for all $\mu\in\cM_\omega$.
Therefore, (\ref{eqErwX}) yields
	\begin{align*}
	\Erw[X|\cF_\ell]&\leq
		\frac{4\gamma\Erw[Z]}{\alpha|\cM_\omega|}\sum_{\mu\in\cM_\omega}\frac{\Var[Z_\mu|\cF_\ell]}{\Erw[Z_\mu]^2}.
	\end{align*}
Choosing $\eps$ small enough, we obtain from 
	(\ref{eqCondVarBound}) and {the tower rule} that
	\begin{align}\label{eqNaiveVariance}
	\Erw[X]=\Erw[\Erw[X|\cF_\ell]]&\leq
		\frac{4\gamma\Erw[Z]}{\alpha|\cM_\omega|}\sum_{\mu\in\cM_\omega}\frac{\Erw[\Var[Z_\mu|\cF_\ell]]}{\Erw[Z_\mu]^2}
			\leq\frac{4\eps\gamma\Erw[Z]}{\alpha}\leq\alpha^2\Erw[Z].
	\end{align}
{Combining with (\ref{eqChoiceOfOmega}) and (\ref{eqXNotTooSmall}), for $n$ sufficiently large we obtain}
	\begin{align} 
	\pr\brk{\abs{\sum_{\mu\in\cM_\omega}Z_\mu-\Erw[Z_\mu|\cF_\ell]}
		 \leq 2\alpha\sum_{\mu\in\cM_\omega}\Erw[Z_\mu]} 
		& \geq \pr\brk{X< \alpha\sum_{\mu\in\cM_\omega}\Erw[Z_\mu]}\nonumber \\
		&\geq \pr\brk{X< \alpha(1-2\alpha^2)\Erw[Z]}\nonumber  \\
		&\geq1-2\alpha \nonumber
	\end{align}
{for $\alpha$ sufficiently small (using Markov's inequality and noting that $X$ is non-negative),} as desired.
\end{proof}

\begin{lemma}\label{Lemma_condMean}
Let
	\begin{align}\label{eqLemma_condMean1}
	U_\ell&=\sum_{l=1}^\ell\sum_{s\in\{\pm1\}^{2l}}C_s\ln(1+\delta_s)-\lambda_s\delta_s.
	\end{align}
Then 
	\begin{align}\label{eqLemma_condMean2}
	\limsup_{\ell\to\infty}\limsup_{n\to\infty}\pr\brk{|\ln\Erw[Z|\cF_\ell]-\ln\Erw[Z]-U_\ell|>\eps}=0\qquad\mbox{for any }\eps>0.
	\end{align}
\end{lemma}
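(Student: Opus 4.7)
The plan is to combine the decomposition $\Erw[Z|\cF_\ell]=\sum_\mu\Erw[Z_\mu|\cF_\ell]$ from~(\ref{eqZdecomposition}) with the Poisson approximations of Propositions~\ref{Prop_shortCycles} and~\ref{Prop_shortCyclesConditional} to show that, on a high-probability event, $\Erw[Z|\cF_\ell]=(1+o(1))\exp(U_\ell)\Erw[Z]$. The algebraic engine is the elementary identity
\[
\frac{\pr[\Po((1+\delta_s)\lambda_s)=c_s]}{\pr[\Po(\lambda_s)=c_s]}=(1+\delta_s)^{c_s}\exp(-\lambda_s\delta_s).
\]
Writing $S_\ell=\bigcup_{l\le\ell}\{\pm 1\}^{2l}$ and $\cB_c=\bigcap_{s\in S_\ell}\{C_s=c_s\}$, the two propositions yield $\Erw[Z_\mu|\cB_c]=(1+o(1))\Erw[Z_\mu]\exp(U_\ell(c))$, where $U_\ell(c)=\sum_{s\in S_\ell}c_s\ln(1+\delta_s)-\lambda_s\delta_s$ is the value of $U_\ell$ on $\cB_c$, with the error uniform over $\mu\in\cM_\omega$ and over all families $(c_s)$ with $c_s\le B$ for any fixed $B$.

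Given $\eps>0$, I would fix parameters in the order $\eps\to C\to\omega\to\ell\to B\to n$. Since~(\ref{eqNicksSumIsFinite}) gives $\sum_s\lambda_s\delta_s^2<\infty$ and $|\delta_s|\le|2q-1|^l$ decays geometrically, a Kolmogorov three-series argument together with Proposition~\ref{Prop_shortCycles} shows that $U_\ell$ converges in distribution to an a.s.\ finite limit; hence some $C=C(\eps)$ satisfies $\pr[|U_\ell|>C]<\eps$ for all sufficiently large $\ell$. Proposition~\ref{Prop_fm}(\ref{eqProp_fm2}) then yields $\omega=\omega(\eps,C)$ with $\sum_{\mu\notin\cM_\omega}\Erw[Z_\mu]<\eps^2 e^{-C}\Erw[Z]$ for large $n$. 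Fixing $\ell$ large and applying Proposition~\ref{Prop_shortCycles}, I pick $B$ with $\pr[\exists s\in S_\ell:C_s>B]<\eps$, and finally take $n$ large enough for all asymptotic statements to hold.

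On the event $\cG=\{|U_\ell|\le C\}\cap\{\max_{s\in S_\ell}C_s\le B\}$, which has probability at least $1-2\eps$, the uniform approximation summed over $\mu\in\cM_\omega$ yields
\[
\sum_{\mu\in\cM_\omega}\Erw[Z_\mu|\cF_\ell]=(1+o(1))\exp(U_\ell)\sum_{\mu\in\cM_\omega}\Erw[Z_\mu],
\]
and the right-hand sum lies between $(1-\eps)\Erw[Z]$ and $\Erw[Z]$ by the choice of $\omega$. Markov's inequality applied to the non-negative random variable $\sum_{\mu\notin\cM_\omega}\Erw[Z_\mu|\cF_\ell]$, whose expectation is below $\eps^2 e^{-C}\Erw[Z]$, shows that with probability at least $1-\eps$ this tail is bounded by $\eps e^{-C}\Erw[Z]\le\eps\exp(U_\ell)\Erw[Z]$ on $\cG$. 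Combining the two estimates gives $\Erw[Z|\cF_\ell]=(1+O(\eps))\exp(U_\ell)\Erw[Z]$ on an event of probability at least $1-O(\eps)$, and taking logarithms yields~(\ref{eqLemma_condMean2}).

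The main obstacle is the interplay between the multiplicative factor $\exp(U_\ell)$ and the Markov-based bound on the tail contribution from $\mu\notin\cM_\omega$: because $U_\ell$ can be as negative as $-C$ with non-vanishing probability, the cut-off $\omega$ must be chosen \emph{after} $C$ so that $\sum_{\mu\notin\cM_\omega}\Erw[Z_\mu]$ is small relative to $e^{-C}\Erw[Z]$, not merely to $\Erw[Z]$. This careful ordering of quantifiers is precisely what makes the double limit in~(\ref{eqLemma_condMean2}) work.
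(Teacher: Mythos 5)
Your proposal is correct and follows essentially the same route as the paper: decompose $Z$ via (\ref{eqZdecomposition}), use Propositions~\ref{Prop_shortCycles} and~\ref{Prop_shortCyclesConditional} together with the Poisson ratio identity $(1+\delta_s)^{c_s}\exp(-\lambda_s\delta_s)$ on bounded cycle-count configurations, and control the remaining $\mu$ via Proposition~\ref{Prop_fm}. The only (valid) deviation is in handling $\mu\notin\cM_\omega$: the paper bounds $\Erw[\sum_{\mu\notin\cM_\omega}Z_\mu\mid \forall s: C_s=c_s]$ by $2\alpha\Erw[Z]/\prod_s\pr\brk{\Po(\lambda_s)=c_s}$ with $\omega$ (through $\alpha=\alpha(\eps,\ell,B)$) chosen after $\ell$ and $B$, whereas you use Markov's inequality on $\sum_{\mu\notin\cM_\omega}\Erw[Z_\mu|\cF_\ell]$ combined with tightness of $U_\ell$ (choosing $\omega$ after $C$ but before $\ell$), which works equally well since $\sum_s\lambda_s\delta_s^2<\infty$ guarantees the required uniform tightness.
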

\begin{proof}
{Let} $B>0$, let $\cC_B$ be the event that $C_s\leq B$ for all $l\leq\ell$ and $s\in\{\pm1\}^{2l}$ and define
	$U_{\ell,B}=U_\ell\vecone\{\PHI\in\cC_B\}$.
\Prop~\ref{Prop_shortCycles} ensures that for any $\ell,\eps>0$ there is $B>0$ such that 
	\begin{align}\label{eqLemma_condMean_2}
	\pr\brk{\cC_B}&>1-\eps.
	\end{align}
Additionally, choose $\omega>0$ large enough so that for a small enough $\alpha=\alpha(\eps,\ell,B)$ we have
{for $n$ sufficiently large, using~(\ref{eqChoiceOfOmega}), that}	$\sum_{\mu\in\cM_\omega}\Erw[Z_\mu]\geq(1-\alpha)\Erw[Z]$.
Then{, noting $\lambda_s\ge0$ and using~(\ref{eqZdecomposition}),} \Prop s~\ref{Prop_shortCycles} and~\ref{Prop_shortCyclesConditional} imply that
for any {assignment of values to $c_s$, $s\in\{\pm1\}^{2l}$,} with $c_s\leq B$ for all $s$ we have for large  $n$
	\begin{align}
	\Erw[Z|\forall l\leq\ell,s\in\{\pm1\}^{2l}:C_s=c_s]&\geq
		\sum_{\mu\in\cM_\omega}\Erw[Z_\mu|\forall l\leq\ell,s\in\{\pm1\}^{2l}:C_s=c_s]\nonumber\\
		&\geq\exp(-\eps)\Erw[Z_\mu]\prod_{l\leq\ell}\prod_{s\in\{\pm1\}^{2l}}\frac{\pr\brk{\Po((1+\delta_s)\lambda_s)=c_s}}{\pr\brk{\Po(\lambda_s)=c_s}}
					\nonumber\\
		&=\exp(-\eps)\Erw[Z_\mu]\prod_{l\leq\ell,s}(1+\delta_s)^{c_s}\exp(-\delta_s\lambda_s).
			\label{eqLemma_condMean_3}
	\end{align}
Similarly, assuming that $\alpha$ is chosen sufficiently small, for large enough $n$ we have {(bounding $\Erw[Z|W]$ by $\Erw[Z]/\pr[W]$ in the first step)}
	\begin{align}
	\Erw[Z|\forall l\leq\ell,s\in\{\pm1\}^{2l}:C_s=c_s]&\leq\frac{2\alpha\Erw[Z]}{\prod_{l\leq\ell,s}\pr\brk{\Po(\lambda_s)=c_s}}
		+\sum_{\mu\in\cM_\omega}\Erw[Z_\mu|\forall l\leq\ell,s\in\{\pm1\}^{2l}:C_s=c_s]\nonumber\\
		&\leq\exp(\eps)\Erw[Z_\mu]\prod_{l\leq\ell,s}(1+\delta_s)^{c_s}\exp(-\delta_s\lambda_s).			\label{eqLemma_condMean_4}
	\end{align}
Combining (\ref{eqLemma_condMean_2}), (\ref{eqLemma_condMean_3}) and~(\ref{eqLemma_condMean_4}) and taking logarithms completes the proof of 
(\ref{eqLemma_condMean2}).
\end{proof}

\begin{proof}[Proof of \Thm~\ref{Thm_SAT}]
Let $(\Lambda_s)_{l,s}$ be a family of independent Poisson variables with $\Erw\Lambda_s=\lambda_s$.
For $\ell\geq1$ we define
	\begin{align*}
	W_\ell=\prod_{l=1}^\ell\prod_{s\in\{\pm1\}^{2l}}(1+\delta_s)^{\Lambda_s}\exp(-\lambda_s\delta_s).
	\end{align*}
Then \Prop~\ref{Prop_shortCycles} shows that for each $\ell$ the random variables $U_\ell$ from \Lem~\ref{Lemma_condMean} converge in distribution to 
$\ln W_\ell$ as $n\to\infty$.
Moreover, comparing (\ref{eqLambdaDelta}) and (\ref{eqMatricesEigenvectors2})  with (\ref{eqLambdaDeltaThm}),
we see that the distribution of $W_{\ell}$ coincides with the distribution of
	$\prod_{l\leq\ell}\prod_{0\leq t\leq l}(1+\delta_s)^{\Lambda_{l,t}}\exp(-\lambda_{l,t}\delta_{l,t}).$
Furthermore, 
{following \cite[\Sec~5]{Janson} we note that the sequence $(W_\ell)_\ell$ is a martingale because
	$\Erw[(1+\delta_s)^{\Lambda_s}\exp(-\lambda_s\delta_s)]=1$ for all sign patterns $s$ and is in fact $L^2$-bounded as
	 $\Erw[((1+\delta_s)^{\Lambda_s}\exp(-\lambda_s\delta_s))^2]=\exp(\delta_s^2\lambda_s)$ and
		 $\sum_s\delta_s^2\lambda_s<\infty$.
		 Hence, the $L^2$-version of the martingale convergence theorem
implies that $W$ is well-defined and that the $W_\ell$ converge to $W$ almost surely and in $L^2$ as $\ell\to\infty$.}
Therefore, the assertion follows from \Cor~\ref{Lemma_ourApproximation} and {Lemma}~\ref{Lemma_condMean}.
\end{proof}

\section{The first moment}\label{Sec_fm}

\noindent{\em We continue to assume that $k\geq k_0$ and that $d$ satisfies (\ref{eqAssumptionOnd}).}

\medskip\noindent
In this section we prove \Prop~\ref{Prop_fm}.
We begin by calculating $\Erw[Z]$.
By linearity of expectation this comes down to calculating the probability that a fixed truth assignment $\tau:\{x_1,\ldots,x_n\}\to\{\pm1\}$ is satisfying.
With the notation introduced in \Sec~\ref{Sec_overview}, we thus aim to calculate the probability that
	$\min_{i\in[m]}\max_{j\in[k]}\sign(i,j)\tau(\partial(i,j))=1$.
Hence, we need to get a handle on the random $\pm1$-sequence $(\sign(i,j)\tau(\partial(i,j)))_{i\in[m],j\in[k]}$.
Clearly, because every literal has an equal number of positive and negative occurrences, for {\em every} assignment $\tau$ we have
	\begin{equation}\label{eqModelB}
	\sum_{i\in[m],j\in[k]}\sign(i,j)\tau(\partial(i,j))=0.
	\end{equation}

To compute $\Erw[Z]$ we merely specialise the first moment computation that was done in~\cite{SAT} in greater
generality to the regular $k$-SAT model.%
	\footnote{Although it is not included in~\cite{SAT}  explicitly,
		Konstantinos Panagiotou and the first author actually had the proof of \Lem~\ref{Lemma_SB} on the blackboard.
		The formula given for the first moment in~\cite{Rathi} {is}  equivalent but of a slightly different form.}
Thus, following \cite{SAT} we study the sequence $(\sign(i,j)\tau(\partial(i,j)))_{i,j}$ by means of another random
$\pm1$-vector $\vec\chi=(\chi_{ij})_{i\in[m],j\in[k]}$.
With $q$ from (\ref{eqq}) the entries $\chi_{ij}$ are mutually independent such that 
	$\pr[\chi_{ij}=1]=q$ and $\pr[\chi_{ij}=-1]=1-q$.
Consider the event
	$\cB=\cbc{\sum_{i\in[m],j\in[k]}\chi_{ij}=0}.$
Then the following is immediate from (\ref{eqModelB}) and the definition of the random formula $\PHI$.

\begin{fact}\label{Fact_fm}
Let $\tau:\{x_1,\ldots,x_n\}\to\{\pm1\}$ be a truth assignment.
Then the
conditional distribution of $\vec\chi$ given $\cB$ coincides with the distribution of $(\sign(i,j)\tau(\partial(i,j)))_{i,j}$.
\end{fact}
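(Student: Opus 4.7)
The plan is to show that both distributions are uniform on the same finite set of $\pm1$-sequences, hence coincide.

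First I would observe that since $q$ satisfies $2q = 1-(1-q)^k$, the parameter $q$ is not $1/2$ in general, so conditioning on $\cB$ is nontrivial. Nevertheless, the event $\cB = \{\sum_{i,j} \chi_{ij} = 0\}$ is equivalent to saying that exactly $mk/2 = dn$ of the coordinates $\chi_{ij}$ equal $+1$ (and the remaining $dn$ equal $-1$). Since the coordinates are i.i.d., for any fixed sequence $\xi \in \{\pm1\}^{[m]\times[k]}$ with $dn$ entries equal to $+1$ we have $\pr[\vec\chi = \xi] = q^{dn}(1-q)^{dn}$, which is the same for all such $\xi$. Thus the conditional law of $\vec\chi$ given $\cB$ is the uniform distribution on the set $\cS$ of $\pm1$-sequences of length $mk$ with exactly $dn$ entries equal to $+1$.

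Second I would analyse the distribution of $Y = (\sign(i,j)\tau(\partial(i,j)))_{i,j}$ under the bijection model (\ref{eqConfModel}). For any fixed assignment $\tau$, a triple $(x,h,s) \in \{x_1,\ldots,x_n\}\times[d]\times\{\pm 1\}$ contributes the value $s\tau(x)\in\{\pm1\}$ when placed at position $(i,j)$. Because every variable appears with $d$ positive and $d$ negative triples, exactly $dn$ of the $2dn$ triples satisfy $s\tau(x) = +1$ and $dn$ satisfy $s\tau(x) = -1$, regardless of $\tau$. In particular $Y$ takes values in $\cS$. Moreover, for any fixed $\xi \in \cS$, the number of bijections $\PHI$ in (\ref{eqConfModel}) for which $Y = \xi$ is $(dn)!\cdot (dn)!$, since we may permute the $dn$ positively-evaluating triples among the $dn$ positions where $\xi$ is $+1$ independently of the permutation of the $dn$ negatively-evaluating triples. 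This count does not depend on $\xi$, so $Y$ is uniform on $\cS$ under the uniform bijection model.

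Combining the two uniformity statements gives the fact. The identity (\ref{eqModelB}) is subsumed by the containment $Y \in \cS$ in the second step; the only step requiring a moment's care is the bijection-counting argument that shows every $\xi \in \cS$ is achieved by the same number of bijections $\PHI$, and this is an immediate consequence of the transitive action of the symmetric group $S_{dn}\times S_{dn}$ on the triples labelled by their truth value. No substantial obstacle is expected; the statement is essentially a restatement of exchangeability in the bijection model.
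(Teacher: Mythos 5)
Your proof is correct and is essentially the argument the paper leaves implicit: the paper declares the fact ``immediate from (\ref{eqModelB}) and the definition of the random formula $\PHI$'', and your two uniformity statements (the conditional law of $\vec\chi$ given $\cB$ is uniform on the sequences with exactly $dn$ entries $+1$, and the $(dn)!\,(dn)!$ counting shows $(\sign(i,j)\tau(\partial(i,j)))_{i,j}$ is uniform on the same set) are precisely the exchangeability reasoning being taken for granted there.
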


\noindent
Hence, to calculate $\Erw[Z]$ we need to figure out the probability of
		$\cS\textstyle=\cbc{\min_{i\in[m]}\max_{j\in[k]}\chi_{ij}=1}$
given $\cB$.	

\begin{lemma}\label{Lemma_SB}
We have
	$\pr\brk{\cS|\cB}\sim(1-(1-q)^k)^{m+\frac12}\bc{4q(1-q)}^{-dn-\frac12}.$
\end{lemma}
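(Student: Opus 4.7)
My plan is to compute $\pr\brk{\cS|\cB}=\pr\brk{\cS\cap\cB}/\pr\brk\cB$ directly via Stirling's formula and a local central limit theorem (LCLT).

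For $\pr\brk\cB$, since $\cB$ asks that exactly $dn$ of the $2dn$ i.i.d.\ variables $\chi_{ij}$ equal $+1$, one has $\pr\brk\cB=\binom{2dn}{dn}q^{dn}(1-q)^{dn}\sim(4q(1-q))^{dn}/\sqrt{\pi dn}$ by Stirling.

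For $\pr\brk{\cS\cap\cB}$, factor as $\pr\brk\cS\cdot\pr\brk{\cB|\cS}$. Under $\pr$ the $m$ clauses are mutually independent, so $\pr\brk\cS=(1-(1-q)^k)^m$. Conditional on $\cS$, the numbers $X_i$ of $+$'s in the $m$ clauses are i.i.d.\ with the tilted law $\tilde p(j)=\binom kj q^j(1-q)^{k-j}/(1-(1-q)^k)$ on $\{1,\ldots,k\}$, and $\cB$ becomes $\{\sum_iX_i=dn\}$. The decisive role of the defining equation $2q=1-(1-q)^k$ is that it forces $\Erw[X_i|\cS]=kq/(2q)=k/2$, so the target $dn=mk/2$ is exactly $m$ times the conditional mean; no large-deviation exponent arises and the LCLT for integer-valued i.i.d.\ sums of span one yields $\pr\brk{\sum_iX_i=dn\,|\,\cS}\sim1/\sqrt{2\pi m\sigma^2}$, where $\sigma^2=\Var(X_i|\cS)$ is a direct computation giving $k(1-q)/2+O(k^2(1-q)^k)\sim k(1-q)/2$ uniformly in the regime $k\geq k_0$ (since $(1-q)^k$ is exponentially small in $k$).

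Combining the three estimates yields $\pr\brk{\cS|\cB}\sim(1-(1-q)^k)^m(4q(1-q))^{-dn}\sqrt{k/(4\sigma^2)}$, and the prefactor $\sqrt{k/(4\sigma^2)}$ simplifies (using $\sigma^2\sim k(1-q)/2$ and $2q=1-(1-q)^k$) to $\sqrt{1/(2(1-q))}=\sqrt{(1-(1-q)^k)/(4q(1-q))}$, producing exactly the extra $+\tfrac12$ in both exponents of the claim. The main obstacle is this algebraic cleanup of the LCLT prefactor, which requires using the defining equation for $q$ together with the asymptotic evaluation of the conditional variance; apart from this and verifying the LCLT hypotheses (integer-valued distribution of span $1$, target at the mean), the proof is mostly routine application of Stirling's formula and independence of clauses.
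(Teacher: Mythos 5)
Your proposal is correct and takes essentially the same route as the paper: Bayes' rule with $\pr\brk{\cS}=(1-(1-q)^k)^m$, $\pr\brk{\cB}=\binom{2dn}{dn}(q(1-q))^{dn}$ evaluated by Stirling, and a local limit theorem for $\pr\brk{\cB|\cS}$, using precisely the observation that the defining equation $2q=1-(1-q)^k$ places $dn$ exactly at the conditional mean so that no exponential correction appears. The only (minor) difference is bookkeeping of the conditional variance: you compute it as $k(1-q)/2+O(k^2(1-q)^k)$ and drop the exponentially small (in $k$) correction, while the paper works with $\Var(\Bin_{\geq1}(k,q))=kq(1-q)/(1-(1-q)^k)$; to the precision used both give the same prefactor and hence the same $+\tfrac12$ in the exponents.
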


\noindent
We prove \Lem~\ref{Lemma_SB} by calculating $\pr[\cS],\pr[\cB]$ and $\pr[\cB|\cS]$ and applying Bayes' rule.

\begin{claim}\label{Claim_S}
We have $\pr\brk{\cS}=(1-(1-q)^k)^m$.
\end{claim}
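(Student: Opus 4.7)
The proof is essentially a direct computation exploiting the mutual independence of the entries of $\vec\chi$. The plan is as follows.

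First, I would unpack the definition of $\cS$. The event $\cS=\{\min_{i\in[m]}\max_{j\in[k]}\chi_{ij}=1\}$ is precisely the intersection over $i\in[m]$ of the clause-satisfaction events $\cS_i=\{\max_{j\in[k]}\chi_{ij}=1\}$. Since the $\chi_{ij}$ are mutually independent Bernoulli-type variables (taking value $1$ with probability $q$ and $-1$ with probability $1-q$) and $\cS_i$ depends only on the $k$ coordinates $(\chi_{i1},\ldots,\chi_{ik})$, the events $\cS_1,\ldots,\cS_m$ are mutually independent. Hence $\pr[\cS]=\prod_{i=1}^m\pr[\cS_i]$.

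Next I would compute $\pr[\cS_i]$ for a single $i$. The complementary event $\{\max_{j\in[k]}\chi_{ij}=-1\}$ happens exactly when $\chi_{ij}=-1$ for every $j\in[k]$, which by independence of the $k$ coordinates has probability $(1-q)^k$. Therefore $\pr[\cS_i]=1-(1-q)^k$, and multiplying over $i\in[m]$ yields the claimed identity
\begin{equation*}
\pr[\cS]=(1-(1-q)^k)^m.
\end{equation*}

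There is no real obstacle here: the whole content of the claim is that once conditioning on $\cB$ is ignored, the $\vec\chi$ model factorises completely over clauses, and within a clause the satisfaction probability is the familiar $1-(1-q)^k$. The work in the lemma lies elsewhere, namely in computing $\pr[\cB]$ and $\pr[\cB\mid\cS]$ (where the constraint (\ref{eqMuSumsToZero})/(\ref{eqModelB}) forces a local central limit style calculation, presumably producing the $\sqrt{\,\cdot\,}$ factors $(1-(1-q)^k)^{1/2}$ and $(4q(1-q))^{-1/2}$). That calculation then combines with Claim~\ref{Claim_S} via Bayes' rule $\pr[\cS\mid\cB]=\pr[\cS]\pr[\cB\mid\cS]/\pr[\cB]$ to give \Lem~\ref{Lemma_SB}.
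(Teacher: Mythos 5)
Your proof is correct and is essentially the paper's own argument: the paper likewise notes that a single clause is all-false with probability $(1-q)^k$ and then invokes the independence of the entries of $\vec\chi$ to multiply the clause-satisfaction probabilities over the $m$ clauses. Your version just spells out the factorisation $\pr[\cS]=\prod_{i=1}^m\pr[\cS_i]$ explicitly, which is fine.
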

\begin{proof}
The probability that for some $i\in[m]$ we have $\max_{j\in[k]}\chi_{ij}=-1$ equals $(1-q)^k$.
Hence, the claim is immediate from the independence of the entries of $\vec\chi$.
\end{proof}

\begin{claim}\label{Claim_B}
We have $\pr\brk{\cB}=\bink{km}{dn}q^{dn}(1-q)^{dn}$.
\end{claim}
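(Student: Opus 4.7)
The claim is a direct binomial computation. The plan is to observe that the total number of entries in $\vec\chi$ equals $km$, and since $m = 2dn/k$ we have $km = 2dn$, which in particular is even. The entries $\chi_{ij}$ are mutually independent, each taking value $+1$ with probability $q$ and $-1$ with probability $1-q$. The event $\cB = \{\sum_{i,j}\chi_{ij}=0\}$ occurs if and only if the number of entries equal to $+1$ equals the number equal to $-1$, i.e., exactly $dn$ of the $km$ entries are $+1$.

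Hence I would write
\begin{align*}
\pr[\cB] &= \pr\brk{\#\{(i,j):\chi_{ij}=1\} = dn} = \binom{km}{dn} q^{dn}(1-q)^{km-dn} = \binom{km}{dn} q^{dn}(1-q)^{dn},
\end{align*}
using $km - dn = 2dn - dn = dn$ in the last step.

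There is no real obstacle here; the only thing to check is that $km = 2dn$, which is built into the model (recall $m = 2dn/k$ and the divisibility assumption), ensuring that $\cB$ can occur with the correct parity and that the binomial coefficient makes sense.
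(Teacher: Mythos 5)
Your proof is correct and follows the same route as the paper: the paper's one-line argument also uses $km=2dn$ together with independence of the entries of $\vec\chi$, so that $\cB$ is exactly the event that $dn$ of the $km$ entries equal $+1$. You have merely spelled out the binomial computation that the paper leaves implicit.
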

\begin{proof}
As $2dn=km$ the assertion follows from the independence of the entries of $\vec\chi$.
\end{proof}

\begin{claim}\label{Claim_BS}
We have $\pr\brk{\cB|\cS}\sim\sqrt\frac{1-(1-q)^k}{2\pi km q(1-q)}\enspace.$
\end{claim}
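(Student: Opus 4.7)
The plan is to apply the local central limit theorem. The event $\cS=\bigcap_{i\in[m]}\{\max_j\chi_{ij}=1\}$ factorises over clauses and depends only on the disjoint blocks $(\chi_{i1},\ldots,\chi_{ik})_{i\in[m]}$ of the independent entries of $\vec\chi$. Consequently, conditional on $\cS$ the clause sums $X_i:=\sum_{j=1}^k\chi_{ij}$ are mutually independent and identically distributed, and the event $\cB$ coincides with $\{\sum_{i=1}^m X_i=0\}$. Since each $X_i$ is supported on the lattice $k+2\mathbb{Z}$ and $mk=2dn$ is even, the value $0$ lies in the lattice $mk+2\mathbb{Z}$ of attainable values of $\sum_i X_i$, so the local CLT applies directly.

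First I would check that the conditional mean vanishes. Writing $\Erw[X_i\vecone\{\cS\}]=\Erw[X_i]-\Erw[X_i\vecone\{\cS^c\}]$ and using that on $\cS^c$ every entry is $-1$, so that $X_i=-k$ and $\pr[\cS^c]=(1-q)^k$, one obtains $\Erw[X_i\vecone\{\cS\}]=k(2q-1)+k(1-q)^k$. The defining equation~(\ref{eqq}), equivalently $(1-q)^k=1-2q$, makes this vanish, so $\Erw[X_i|\cS]=0$. An analogous two-term split for the second moment, using $\Erw[X_i^2\vecone\{\cS^c\}]=k^2(1-q)^k$ together with~(\ref{eqq}) to collapse the boundary term, produces a clean closed form for the conditional variance $\sigma^2:=\Var[X_i|\cS]$, whose leading behaviour is of order $k(1-q)$.

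The local CLT for i.i.d.\ lattice sums with step $h=2$ then yields $\pr[\cB|\cS]=\pr\brk{\sum_{i=1}^m X_i=0\mid\cS}\sim 2/\sqrt{2\pi m\sigma^2}$. Substituting the simplified expression for $\sigma^2$ and invoking $1-(1-q)^k=2q$ once more to recast the prefactor into the form $\sqrt{(1-(1-q)^k)/(2\pi kmq(1-q))}$ completes the argument. The main obstacle is the variance computation and the subsequent algebraic simplification: the identity~(\ref{eqq}) has to be used both to eliminate the $(1-q)^k$ boundary corrections arising from the conditioning on $\cS$ and to recast the Gaussian normalisation into the stated form featuring $q(1-q)$ in the denominator. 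Verifying the hypothesis of the local CLT (non-triviality of $\sigma^2$ and genuine lattice support with step $2$) is routine.
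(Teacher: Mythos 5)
Your plan follows essentially the same route as the paper's proof: under the conditioning on $\cS$ the clause blocks are i.i.d., the choice (\ref{eqq}) of $q$ centres the conditional clause sums exactly, and a local limit theorem for lattice sums gives the stated constant; that you track the $\pm1$ sums (lattice span $2$) while the paper tracks the number of $+1$ entries (span $1$) is purely cosmetic. The only caveat is that the exact conditional variance per clause is $2k(1-q)-k^2(1-2q)$, whose second term does not disappear under (\ref{eqq}), so your final prefactor matches the stated $\sqrt{(1-(1-q)^k)/(2\pi kmq(1-q))}$ only after discarding that exponentially small (in $k$) correction --- which is precisely the simplification the paper itself makes when it writes $\Var(\Bin_{\geq1}(k,q))=\Var(\Bin(k,q))/(1-(1-q)^k)$.
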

\begin{proof}
Let $X=\sum_{i=1}^m\sum_{j=1}^k\vecone\{\chi_{ij}=1\}$.
Then $\cB=\{X=0\}$.
Moreover, the choice (\ref{eqq}) of $q$ ensures that
	\begin{equation}\label{eqClaim_BS1}
	\Erw[X|\cS]=\frac{kmq}{1-(1-q)^k}=dn.
	\end{equation}
Further, given $\cS$, $X$ is merely the sum of the independent random variables $X_i=\sum_{j=1}^k\vecone\{\chi_{ij}=1\}$ and
	\begin{align*}
	\Var[X_i|\cS]&=\Var(\Bin_{\geq1}(k,q))
		=\frac{\Var(\Bin(k,q))}{1-(1-q)^k}=\frac{kq(1-q)}{1-(1-q)^k}.
	\end{align*}
Consequently, $\Var(X)=km\cdot\frac{q(1-q)}{1-(1-q)^k}$.
Thus, the assertion follows from (\ref{eqClaim_BS1}) and the local limit theorem for sums of independent random variables~\cite{DavisMcDonald}.
\end{proof}

\begin{proof}[Proof of \Lem~\ref{Lemma_SB}]
By Bayes' rule, Claims~\ref{Claim_S}--\ref{Claim_BS} and Stirling's formula,
	\begin{align*}
	\pr\brk{\cS|\cB}&\sim
		\frac{\pr\brk{\cB|\cS}\pr\brk{\cS}}{\pr\brk\cB}=
		\sqrt\frac{1-(1-q)^k}{2\pi km q(1-q)}\frac{(1-(1-q)^k)^m}{\bink{km}{km/2}(q(1-q))^{km/2}}
		\sim(1-(1-q)^k)^{m+\frac12}\bc{4q(1-q)}^{-dn-\frac12},
	\end{align*}
as claimed.
\end{proof}

Proceeding to the expectation of $Z_\mu$, we let $M(\sigma)$
be the number of indices $i\in[m]$ such that {the random vector $\chi$ satisfies} $\chi_{ij}=\sigma_j$ for all $j\in[k]$ for $\sigma\in\Sigma=\{\pm1\}^k\setminus\{(-1,\ldots,-1)\}$.
Further, for $\mu\in\cM$ let 
	\begin{equation}\label{eqSmu}
	\cS_\mu=\{M(\sigma)=m\mu(\sigma)\mbox{ for all $\sigma\in\Sigma$}\}.
	\end{equation}

\begin{claim}\label{Lemma_Smu}
For any $\mu\in\cM$ we have $\pr\brk{\cS_\mu|\cB\cap\cS}=\bink{m}{m\mu}{(q(1-q))^{dn}}/{\pr\brk{\cB\cap\cS}}$.
\end{claim}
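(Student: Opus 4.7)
The plan is to observe that the event $\cS_\mu$ already forces both $\cS$ and $\cB$, so that $\cS_\mu \cap \cB \cap \cS = \cS_\mu$, and then to compute $\pr[\cS_\mu]$ directly as a multinomial probability.

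First I would note the containment $\cS_\mu \subseteq \cS$: since $\Sigma = \{\pm1\}^k \setminus \{(-1,\ldots,-1)\}$ excludes the all-false pattern, on the event $\cS_\mu$ every clause has pattern in $\Sigma$ and therefore at least one $+1$, so $\min_{i}\max_{j}\chi_{ij}=1$. Next, using the constraint (\ref{eqMuSumsToZero}) built into the definition of $\cM$, I would check that $\cS_\mu \subseteq \cB$ as well. Indeed, writing the number of $+1$s in a pattern $\sigma$ as $(k + \scal{\sigma}{\vecone})/2$, the total number of $+1$ entries of $\vec\chi$ on $\cS_\mu$ equals
\begin{align*}
\sum_{\sigma\in\Sigma} m\mu(\sigma) \cdot \frac{k + \scal{\sigma}{\vecone}}{2} = \frac{mk}{2} + \frac{m}{2}\sum_{\sigma\in\Sigma}\mu(\sigma)\scal{\sigma}{\vecone} = \frac{mk}{2} = dn,
\end{align*}
by (\ref{eqMuSumsToZero}) and the relation $km = 2dn$. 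Hence the number of $+1$s equals the number of $-1$s, which is exactly $\cB$. Consequently $\pr[\cS_\mu \mid \cB \cap \cS] = \pr[\cS_\mu]/\pr[\cB \cap \cS]$.

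It remains to evaluate $\pr[\cS_\mu]$. Since the entries of $\vec\chi$ are independent Bernoulli, the probability that a fixed clause $i$ has a prescribed pattern $\sigma\in\Sigma$ equals $q^{(k+\scal{\sigma}{\vecone})/2}(1-q)^{(k-\scal{\sigma}{\vecone})/2}$, and the multinomial count of assignments of patterns to clauses with $m\mu(\sigma)$ copies of each $\sigma$ is $\binom{m}{m\mu}$. Multiplying out and collecting exponents gives
\begin{align*}
\pr[\cS_\mu] = \binom{m}{m\mu} q^{\sum_\sigma m\mu(\sigma)(k+\scal{\sigma}{\vecone})/2}(1-q)^{\sum_\sigma m\mu(\sigma)(k-\scal{\sigma}{\vecone})/2} = \binom{m}{m\mu} (q(1-q))^{dn},
\end{align*}
where in the last step both exponents equal $dn$ by the computation above. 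Dividing by $\pr[\cB \cap \cS]$ yields the claim.

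There is no real obstacle here — the only subtlety is recognizing that the constraint (\ref{eqMuSumsToZero}) is precisely what makes $\cS_\mu$ a subset of $\cB$, which turns the conditional probability into the ratio of a clean multinomial expression to $\pr[\cB\cap\cS]$. No appeal to a limit theorem or further combinatorics is needed.
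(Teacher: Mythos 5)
Your proof is correct and follows the same route as the paper: observe that the definition of $\cM$ (in particular the balance constraint and the fact that $\Sigma$ excludes the all-false pattern) forces $\cS_\mu\subset\cS\cap\cB$, and then compute $\pr[\cS_\mu]$ as a multinomial probability using the independence of the entries $\chi_{ij}$. You have merely spelled out the details that the paper leaves implicit.
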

\begin{proof}
The definition of the set $\cM$ ensures that $\cS_\mu\subset\cS\cap\cB$.
Therefore, the lemma follows from the independence of the entries $\chi_{ij}$.
\end{proof}

\begin{proof}[Proof of \Prop~\ref{Prop_fm}]
Combining Fact~\ref{Fact_fm}, \Lem~\ref{Lemma_SB} and multiplying by the total number of truth assignments, we obtain
	\begin{align}\label{eqFirstMomentFormula1}
	\Erw[Z]&\sim2^n(1-(1-q)^k)^{m+\frac12}\bc{4q(1-q)}^{dn+\frac12}.
	\end{align}
Further, expanding (\ref{eqq}), we see that the unique solution $q\in(0,1)$ satisfies
	\begin{align}\label{eqFirstMomentFormula2}
	q&=\frac12-2^{-1-k}+O(k/4^k).
	\end{align}
Hence, recalling (\ref{eqAssumptionOnd}), we obtain
	\begin{align}
	\ln\Erw[Z]&=n\ln2+\bc{2dn/k+1/2}\ln(1-(1-q)^k)-\bc{dn+1/2}\ln(q(1-q)/4)\nonumber\\
		&=n\brk{\ln2+2k^{-1}d\ln(1-(1-q)^k)-d\ln(4q(1-q))}+O(1)
		=4n(2^{-k}+O(k^24^{-k}))=\Omega(n).\label{eqFirstMomentFormula3}
	\end{align}
Finally, (\ref{eqProp_fm1}) follows from (\ref{eqFirstMomentFormula1}) and (\ref{eqFirstMomentFormula3}).

To complete the proof of \Prop~\ref{Prop_fm}, fix a number $\omega>0$.
The definition vectors $\mu\in\cM_\omega$ must satisfy the two conditions
	$\sum_{\sigma\in\Sigma}\mu(\sigma)=1$ and $\sum_{\sigma\in\Sigma}\mu(\sigma)\scal{\vecone}\sigma=0$.
Therefore,
	\begin{equation}\label{eqProp_fm_1}
	|\cM_\omega|=\Theta(m^{-1+|\Sigma|/2}),
	\end{equation}
with the number hidden in the $\Theta\bc\cdot$ dependent on $\omega$, of course.
Further,
Claims~\ref{Claim_S}, \ref{Claim_BS} and Claim~\ref{Lemma_Smu} and Stirling's formula imply that uniformly for all $\mu\in\cM_\omega$,
	\begin{align*}
	\pr\brk{\cS_\mu|\cB\cap\cS}&=\Theta(\sqrt m)\bink{m}{m\mu}\frac{(q(1-q))^{dn}}{(1-(1-q)^k)^m}
		=\Theta(m^{1-|\Sigma|/2})\frac{(q(1-q))^{dn}}{(1-(1-q)^k)^m}\prod_{\sigma\in\Sigma}\mu(\sigma)^{-m\mu(\sigma)}.
	\end{align*}
Rewriting the last expression in terms of the distribution $\bar\mu$ from (\ref{eqbarmu}), we obtain
	\begin{align}\label{eqProp_fm_2}
	\pr\brk{\cS_\mu|\cB\cap\cS}&
		=\Theta(m^{1-|\Sigma|/2})\exp\bc{-m\KL{\mu}{\bar\mu}}\qquad\mbox{uniformly for }\mu\in\cM_\omega.
	\end{align}
Since the Kullback-Leibler divergence, 
whose definition we recall from (\ref{eqKL}),
 attains its global minimum at the point $\mu=\bar\mu$ and because
its second and third derivative are bounded at this point, (\ref{eqProp_fm3}) follows from (\ref{eqProp_fm_1}),
	(\ref{eqProp_fm_2}) and  Fact~\ref{Fact_fm}.
Finally, (\ref{eqProp_fm2}) follows from (\ref{eqProp_fm_2}) because the Kullback-Leibler divergence is strictly convex.
\end{proof}

\section{Counting cycles}\label{Sec_cycles}

\subsection{Proof of \Prop~\ref{Prop_shortCycles}}
{Similar results were proved for bipartite graphs in the second author's PhD thesis~\cite{Wth}. (See Proposition 3.5 for example, and Theorem 3.12 more explicitly for biregular bipartite graphs of large girth.) The (minor) difference here is that the sign patterns of the cycles are specified.}
The key step of the proof is to establish the following lemma.

\begin{lemma}\label{Lemma_cycleMoments}
Let $S\subset\bigcup_{l\geq1}\{\pm1\}^{2l}$ be a finite set and let $(c_s)_{s\in S}$ be a non-negative integer vector.
Then
	\begin{align*}
	\lim_{n\to\infty}\Erw\prod_{s\in S}C_s^{\underline{c_s}}=\prod_{s\in S}\lambda_s^{c_s}.
	\end{align*}
\end{lemma}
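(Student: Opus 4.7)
The plan is to apply the method of factorial moments directly in the bijection model (\ref{eqConfModel}). By definition, $\prod_{s\in S} C_s^{\underline{c_s}}$ counts ordered families of cycle configurations, indexed by $s\in S$, with $c_s$ pairwise distinct cycles of sign pattern $s$ in each block. Taking expectations and interchanging sum and expectation, I would decompose the resulting sum according to whether the cycles in the family are pairwise vertex-disjoint in the factor graph, and argue that only vertex-disjoint families contribute to the $n\to\infty$ limit.

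For the disjoint term, each cycle of length $2l$ with sign pattern $s$ is specified by four independent choices: (i) an ordered $l$-tuple of distinct clauses with the minimum-indexed one in the starting position as in CY1, yielding $\sim m^l/l$ options; (ii) an ordered $l$-tuple of distinct variables, giving $\sim n^l$; (iii) an ordered pair of slots in each clause compatible with CY4, contributing $\binom{k}{2}(k(k-1))^{l-1}$; and (iv) for each variable, an ordered pair chosen from its $2d$ signed copies and matching the prescribed edge signs. The copy count in (iv) equals $d(d-1)$ when the two incident signs $s_{2i},s_{2i+1}$ agree and $d^2$ when they differ, aggregating over the $l$ variables of one cycle to $d^{2l}\bc{(d-1)/d}^{l/2+(1/2)\sum_{i=1}^l s_{2i}s_{2i+1}}$. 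The uniformly random bijection then realizes the $2lT$ prescribed slot-to-copy assignments (with $T=\sum_s c_s$) with probability $1/(km)^{\underline{2lT}}$. Substituting $km=2dn$, the algebra collapses precisely to $\prod_s \lambda_s^{c_s}$ with $\lambda_s$ as in (\ref{eqLambdaDelta}).

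For non-disjoint families, any overlap between two cycles, whether sharing a clause, a variable, or one or more edges, forces a combinatorial identification that reduces the leading-order enumeration by at least one factor of $n^{-1}$, while the bijection probability $1/(km)^{\underline{\,\cdot\,}}$ adjusts only by constants. Summing over the finitely many overlap patterns therefore yields an $O(n^{-1})$ contribution, which vanishes in the limit. The main obstacle I expect is the careful bookkeeping of the signed variable copies across the cycle: one must confirm that the $((d-1)/d)$ correction from matched-sign pairs threads through the enumeration exactly as in the formula for $\lambda_s$, and that this factor is preserved under all overlap types. The underlying approach is the classical short-cycle calculation for random biregular bipartite graphs from the second author's thesis \cite{Wth}, modified here by carrying the $\pm1$ labels along each edge and partitioning the copy contribution according to whether consecutive signs agree.
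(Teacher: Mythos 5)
Your proposal is correct and takes essentially the same approach as the paper: restrict to vertex-disjoint families of cycles (the paper does this via the claim that bounded vertex sets spanning more edges than vertices occur with expected number $O(1/n)$), then enumerate, for each cycle, the clause slots, the distinct variables, and the signed copies ($d(d-1)$ for agreeing signs, $d^2$ otherwise) and multiply by the bijection probability, which collapses to $\prod_{s\in S}\lambda_s^{c_s}$ exactly as in the paper. One small imprecision: when two cycles share \emph{edges}, the bijection probability changes by factors of order $n$ rather than ``only constants,'' but the enumeration loses correspondingly more, so your $O(n^{-1})$ bound for overlapping families still holds, just as in the paper's argument via vertex sets spanning more edges than vertices.
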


\noindent
\Prop~\ref{Prop_shortCycles} is immediate from \Lem~\ref{Lemma_cycleMoments} and standard results on convergence
to the Poisson distribution (e.g., \cite[\Thm~1.23]{Bollobas}).
To prove \Lem~\ref{Lemma_cycleMoments} we recall that the random factor graph $G(\PHI)$ 
is obtained 
{by linking clones of clauses and literals according to the random bijection (\ref{eqConfModel})}.

\begin{claim}\label{Fact_unicyclic}
Fix an integer $b>1$.
	The expected number of sets of at most $b$ vertices that span more than $b$ edges in $G(\PHI)$ is $O(1/n)$.
\end{claim}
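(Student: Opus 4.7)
The plan is a routine first--moment computation carried out directly in the bijection model~(\ref{eqConfModel}). Any candidate set $S$ of at most $b$ vertices decomposes as $S=I\cup V$ with $I\subseteq[m]$ and $V\subseteq\{x_1,\ldots,x_n\}$; set $a=|I|$ and $c=|V|$, so $a+c\leq b$. The edges of $G(\PHI)$ spanned by $S$ are in bijection with the pairs $(i,j)\in I\times[k]$ such that $\partial(i,j)\in V$. If $S$ spans more than $b$ such edges, then in particular some $e$ pairs $(i_1,j_1),\ldots,(i_e,j_e)\in I\times[k]$ with $b+1\leq e\leq kb$ witness this, so it suffices to bound the expected number of ordered tuples $(I,V,(i_1,j_1),\ldots,(i_e,j_e))$ of this form.

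For fixed $a,c,e$ there are $\binom{m}{a}\binom{n}{c}=O(n^{a+c})$ choices of $(I,V)$ and $O(1)$ choices of the $e$ distinct clause--slots inside $I\times[k]$, since $a$ and $e$ are bounded by $b$. Under the uniformly random bijection~(\ref{eqConfModel}), restricting to any $e$ preimages leaves the joint image uniform on $e$-tuples of distinct literal--slots, so the probability that all $e$ chosen clause--slots map into one of the $2dc$ literal--slots attached to $V$ equals $(2dc)^{\underline e}/(2dn)^{\underline e}=O(n^{-e})$. Hence the expected number of tuples with parameters $(a,c,e)$ is $O(n^{a+c-e})$.

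Since $a+c\leq b$ and $e\geq b+1$, every such contribution is $O(n^{-1})$, and summing over the finitely many admissible triples $(a,c,e)$ preserves this bound. Because every bad set $S$ gives rise to at least one such tuple (take $a,c$ to be its vertex--class sizes and any $b+1$ of its spanned edges), the expected number of bad sets is itself $O(1/n)$. I do not anticipate any real obstacle: the argument is the standard ``few dense subgraphs'' estimate for sparse random multigraphs, adapted to the bipartite bijection model. The only mild point of care is to treat the two vertex classes separately and to note that conditioning a uniform bijection on $e$ coordinates leaves the remaining image uniform, which is what yields the clean denominator $(2dn)^{\underline e}$ in the probability bound.
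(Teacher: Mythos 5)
Your proposal is correct and follows essentially the same route as the paper: a union bound over choices of at most $b$ clause/variable vertices and $e>b$ spanned edge-slots, with the probability $(2dc)^{\underline e}/(2dn)^{\underline e}=O(n^{-e})$ computed in the uniform bijection model, giving $O(n^{a+c-e})=O(1/n)$ per parameter choice. The paper organizes the same count as choosing $b_3$ clones on both sides and matching them, which is identical bookkeeping, so there is nothing substantive to add.
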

\begin{proof}
Suppose that $b_1,b_2>0$ are integers such that $b_1+b_2=b$ and let $b_3>b$.
Let $Y(b_1,b_2,b_3)$ be the number of pairs $(A,B)$ such that $A\subset\{x_1,\ldots,x_n\}$, $|A|=b_1$,
$B\subset[m]$ such that $A\cup B$ spans at least $b_3$ edges in $G(\PHI)$.
Then
	\begin{align}\label{eqFact_unicyclic}
	Y(b_1,b_2,b_3)&\leq\bink{n}{b_1}\bink{m}{b_2}\bink{2db_1}{b_3}\bink{kb_2}{b_3}b_3!(2dn-b_3)!/(2dn)!\enspace;
	\end{align}
indeed, the binomial coefficients count the number of ways of choosing $b_1$ variables, $b_2$ clauses and
$b_3$ ``clones'' of the chosen variables and clauses.
Then there are $b!$ ways of matching these chosen clones up and $(2dn-b_3)!$ ways of joining the remaining clones.
By comparison, the total number of bijections (\ref{eqConfModel}) equals $(2dn)!$.
The r.h.s.\ of (\ref{eqFact_unicyclic}) is $O(1/n)$ because $b_3>b$.
Finally, the assertion follows by summing over all $b_1,b_2$ such that $b_1+b_2=b$ and all $b_3$ such that $b<b_3\leq\min\{2db_1,kb_2\}$.
\end{proof}

Let $l\geq1$ and let $s\in\{\pm1\}^{2l}$.
As a warm-up we calculate $\Erw[C_s]$; in the process we introduce a bit of notation that will prove useful in \Sec~\ref{Sec_shortCyclesConditional} as well.
Each cycle with sign pattern $s$ arises as follows.
We start from some clause vertex $i$ of $G(\PHI)$.
Then we alternate between variable nodes and clause nodes such that the signs decorating the edges that we walk through are as prescribed by $s$.
Finally, the $l$th variable loops back to the original clause that we started from.
Of course, given the starting clause $i$, each such walk can be encoded by specifying the clones of the clause/literal clones that we follow at each step.
Thus, we let $I(s)$ be the set of all families $(j_h,g_h)_{h=2,\ldots,2l+1}$
with $j_h\in[k]$, $g_h\in[d]$ such that
	\begin{itemize}
	\item $j_2\neq j_{2l+1}$ and $j_{2h+1}\neq j_{2h+2}$ for all $h<l$,
	\item $g_{2h}\neq g_{2h+1}$ for all $h\in[l]$ such that $s_{2h}s_{2h+1}=1$.
	\end{itemize}
Then
	\begin{equation}\label{eqCountingCycles1}
	|I(s)|=(k(k-1))^{2l}d^l\prod_{h=1}^{l}(d-1)^{(1+s_{2h}s_{2h+1})/2}d^{(1-s_{2h}s_{2h+1})/2}.
	\end{equation}
Furthermore, for $i\in[m]$ let $\cC_{\PHI}(s,i,j,g)$ be the event that the cycle prescribed by $(j,g)\in I(s)$ materialises from the starting clause $i$.
That is, if we define $i_2=i$ and $i_{2t-1}=i_{2t}=\partial(\partial(i_{2t-2},j_{2t-2}),g_{2t-1})$ for $t\geq2$, then $(i,j)$ satisfies the conditions
	{\bf CY1--CY5} and $\PHI[i_{t},j_{t}]\in\{x_1,\ldots,x_n\}\times\{g_{t}\}\times\{\pm1\}$ for all $t=2,\ldots,2l+1$.

We claim that
	\begin{align}\label{eqErwCs}
	\Erw[C_s]&=\sum_{i=1}^m\sum_{(j,g)\in I(s)}\pr[\cC_{\PHI}(s,i,j,g)]\sim\frac{|I(s)|}{2l}(2kd)^{-l}=\lambda_s.
	\end{align}
Indeed, because $\PHI$ comes from the random bijection (\ref{eqConfModel}),
the probability that for $h\in[2l]$ the $j_{2h}$th clone of clause $i_{2h}$ is connected to the $g_{2h}$th clone with sign $s_{2h}$
of some variable is $(2d)^{-1}+o(1)$.
Further, the probability that the $g_{2h}$th clone with sign $s_{2h+1}$ of this variable is connected to the $j_{2h+1}$th clone
of some clause is $k^{-1}+o(1)$.
Ultimately, the probability that the $g_{2l}$th clone of sign $s_{2l+1}$ of the last variable visited is connected to the $j_{1}$th clone of the starting clause is $(1+o(1))(km)^{-1}$.
Finally, the factor $1/2l$ in (\ref{eqErwCs}) comes from {\bf CY4} and the convention that we consider the clauses with the least index the
starting point of the cycle.

\begin{proof}[Proof of \Lem~\ref{Lemma_cycleMoments}]
It is straightforward to extend the argument from the previous paragraph to the joint factorial moments of the random variables $C_s$.
Hence,  let $S\subset\bigcup_l\{\pm1\}^{2l}$ be finite and let $c=(c_s)_{s\in S}$ be an integer vector with $c_s>0$ for all $s$.
Then $\prod_{s\in S}C_s^{\underline{c_s}}$ is the number of families that contain precisely $c_s$ distinct cycles of type $s$ for each $s\in S$.
By Fact~\ref{Fact_unicyclic} we just need to count families of vertex-disjoint cycles.
To this end, we choose distinct starting clauses $(i(s,g))_{s\in S,g\in[c_s]}$.
Because $\sum_{s\in S}c_s$ remains fixed as $n\to\infty$, the number of choices is $(1+O(1/m))m^{\sum_{s\in S}c_s}$.
Further, for each $s\in S$ and $g\in[c_s]$ we pick $(j_h(s,g),g_h(s,g))_h\in I(s)$.
By the same reasoning as in the calculation of $\Erw[C_s]$, for each $s\in S$, $g\in[c_s]$ the probability that
the desired cycle materialises is $(1+o(1))(2kd)^{-l}m^{-1}$.
In fact, these events are asymptotically independent because we only consider vertex-disjoint cycles and $\sum_{s\in S}c_s=O(1)$ as $n\to\infty$.
Hence \Lem~\ref{Lemma_cycleMoments} follows.
\end{proof}

\subsection{Proof of \Prop~\ref{Prop_shortCyclesConditional}}\label{Sec_shortCyclesConditional}
With respect to  \Prop~\ref{Prop_shortCyclesConditional}, we use the random vector $\vec\chi$ and the other notation from \Sec~\ref{Sec_fm}.
Consider the following experiment for constructing a formula $\hat\PHI$ together with an assignment $\hat\SIGMA$,
which we call the {\em planted distribution}; similar constructions have been used previously~\cite{victor,kSAT,SAT}.
\begin{description}
\item[PL1] choose a truth assignment $\hat\SIGMA:\{x_1,\ldots,x_n\}\to\{\pm1\}$ uniformly at random.
\item[PL2] choose $\vec\chi$ independently of $\hat\SIGMA$ given that $\vec\chi\in\cS\cap\cB$.
\item[PL3] choose bijection $\hat\PHI:[m]\times[k]\to\{x_1,\ldots,x_n\}\times[d]\times\{\pm1\}$ uniformly subject to the condition
		$$\sign_{\hat\PHI}(i,j)\hat\SIGMA(\partial_{\hat\PHI}(i,j))=\chi_{ij}\qquad\mbox{for all }(i,j)\in[m]\times[k].$$

\end{description}

\noindent
In words, we first choose a truth assignment $\hat\SIGMA$ uniformly at random.
Then, we prescribe a random sequence $\vec\chi$ of $km$ truth values subject to the condition that for each clause index $i\in[m]$ there exists $j\in[k]$
such that $\chi_{ij}=1$ and such that $\sum_{i,j}\chi_{ij}=0$.
Finally, we randomly match those literal occurrences that the assignment $\hat\SIGMA$ renders true to the
precisely the $dn$ clause slots $(i,j)$ such that $\chi_{ij}=1$
and the ones that $\hat\SIGMA$ sets to false to the $dn$ remaining positions.
As an immediate consequence of Fact~\ref{Fact_fm} we obtain

\begin{fact}\label{Fact_planting}
Let $\cA$ be a set of pairs $(\Phi,\sigma)$ of formulas and assignments.
Moreover, let $Z_{\cA}(\PHI)$ be the number of satisfying assignments $\sigma$ of $\PHI$ such that $(\PHI,\sigma)\in\cA$.
Then
	$\Erw[Z_{\cA}(\PHI)]=\Erw[Z(\PHI)]\cdot\pr[(\hat\PHI,\hat\SIGMA)\in\cA]$.
\end{fact}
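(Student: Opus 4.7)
The plan is to establish the stronger claim that the planted pair $(\hat\PHI,\hat\SIGMA)$ has precisely the distribution of a uniformly chosen pair $(\Phi,\sigma)$ with $\sigma$ a satisfying assignment of $\Phi$: namely, for every pair in which $\sigma$ satisfies $\Phi$,
\begin{align*}
\pr\brk{(\hat\PHI,\hat\SIGMA)=(\Phi,\sigma)}=\pr\brk{\PHI=\Phi}/\Erw[Z].
\end{align*}
Once this identity is in hand, multiplying through by $\vecone\{(\Phi,\sigma)\in\cA\}$ and summing over all pairs yields $\pr[(\hat\PHI,\hat\SIGMA)\in\cA]=\Erw[Z_{\cA}(\PHI)]/\Erw[Z]$, which rearranges to the desired formula.

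To prove the factorisation I would decompose the planted construction into its three steps. First, {\bf PL1} contributes $\pr[\hat\SIGMA=\sigma]=2^{-n}$ for every truth assignment $\sigma$. Second, with $\sigma$ fixed, Fact~\ref{Fact_fm} applied at $\tau=\sigma$ identifies the conditional law of $\vec\chi$ given $\cB$ with that of $(\sign_{\PHI}(i,j)\sigma(\partial_{\PHI}(i,j)))_{i,j}$ under $\PHI$; conditioning further on $\cS$ (which on the $\PHI$ side corresponds to $\sigma$ satisfying $\PHI$) therefore shows that the law of $\vec\chi$ given $\hat\SIGMA=\sigma$ agrees with the law of the $\sigma$-truth pattern of $\PHI$ conditioned on $\sigma$ being satisfying. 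Third, {\bf PL3} makes $\hat\PHI$ uniform among bijections compatible with $(\vec\chi,\sigma)$, and the uniform distribution of $\PHI$ induces exactly the same uniform conditional law on that same set of bijections. Putting these together, the conditional law of $\hat\PHI$ given $\hat\SIGMA=\sigma$ coincides with the conditional law of $\PHI$ given that $\sigma$ satisfies $\PHI$.

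To finish, I would invoke the symmetries of the regular $k$-SAT model (permutations of variables and flipping of all literal occurrences of a single variable), which act transitively on truth assignments and hence force $\pr[\sigma\text{ satisfies }\PHI]=\Erw[Z]/2^n$ for every $\sigma$. Combining this with $\pr[\hat\SIGMA=\sigma]=2^{-n}$ and the conditional identity of the previous paragraph delivers the displayed factorisation. The only step that is not purely formal bookkeeping is the uniformity used in {\bf PL3}: one has to check that for every $\chi\in\cS\cap\cB$ the number of bijections with truth pattern $\chi$ is independent of $\chi$, namely $(dn)!^2$, which matches the $dn$ positions $(i,j)$ with $\chi_{ij}=+1$ to the $dn$ literal occurrences that $\sigma$ renders true and similarly for $\chi_{ij}=-1$. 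This is precisely where the balance constraint $\sum_{i,j}\chi_{ij}=0$ defining $\cB$ enters, and it is the only non-trivial verification; everything else is a one-line application of Bayes' rule.
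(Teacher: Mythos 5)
Your argument is correct and is essentially the expansion of what the paper treats as immediate: identifying, via Fact~\ref{Fact_fm} and the count of $(dn)!^2$ compatible bijections for each balanced truth pattern, the planted pair $(\hat\PHI,\hat\SIGMA)$ with a uniformly random pair $(\Phi,\sigma)$ in which $\sigma$ satisfies $\Phi$, and then summing over $\cA$. The only remark is that your final symmetry/transitivity step is redundant, since Fact~\ref{Fact_fm} already shows $\pr[\sigma\mbox{ satisfies }\PHI]=\pr[\cS|\cB]$ for every $\sigma$, which gives $\pr[\sigma\mbox{ satisfies }\PHI]=\Erw[Z]/2^n$ directly.
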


We are going to use \Lem~\ref{Lemma_muCycles}  to prove the following statement.
Let $I(s)$ be as in the previous section.
As before we are going to be interested in the event that for a clause index $i$ and $(j,g)\in I(s)$ the event $\cC_{\hat\PHI}(i,j,g,s)$
that a cycle as described by $i,j,g,s$ occurs in the formula $\hat\PHI$.
Further, for $i\in[m]$ let $\cC_{\hat\PHI}(i,s)$ be the event that there exists $(j,g)\in I(s)$ such that $\cC_{\hat\PHI}(i,j,g,s)$ occurs.

\begin{lemma}\label{Lemma_plantedCycles}
Let $S\subset\bigcup_{l\geq1}\{\pm1\}^{2l}$ be finite and let $c=(c_s)_{s\in S}$ be a non-negative integer vector.
Let $\vec i=(\vec i(s,a))_{s\in S,a\in[c_s]}$ be a random vector whose entries $\vec i(s,a)\in[m]$ are independent and uniformly distributed.
Then
	\begin{align*}
	\pr\brk{\bigcap_{s\in S,a\in[c_s]}\cC_{\hat\PHI}(\vec i(s,a),s)
		}&\sim\prod_{s\in S}\bcfr{(1+\delta_s)\lambda_s}{m}^{c_s}.
	\end{align*}
\end{lemma}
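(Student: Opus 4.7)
My plan is to extend the unconditioned cycle-count calculation leading to~(\ref{eqErwCs}) to the planted distribution, with the bias induced by \textbf{PL2--PL3} supplying precisely the factor $(1+\delta_s)$. First I would use symmetry of the planted distribution under variable-wise sign-flip---which simultaneously negates $\hat\SIGMA(x)$ and $\chi_{ij}$ at every slot connected to $x$, leaving the matching constraint in \textbf{PL3} invariant---to reduce to the case $\hat\SIGMA\equiv+1$. Under this reduction the constraint becomes $\sign_{\hat\PHI}(i,j)=\chi_{i,j}$, and $\hat\PHI$ factorizes into two independent uniform bijections: one between the $dn$ positive literal clones and the $dn$ slots carrying $\chi=+1$, and its dual on the negative side.

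Next, fix a sign pattern $s$, a starting clause $i$, and $(j,g)\in I(s)$. For the cycle encoded by $(i,j,g,s)$ to materialize, the $2l$ cycle slots must carry $\chi_{i_t,j_t}=s_t$, and the two sub-bijections must route these slots to the $2l$ prescribed literal clones. By a local CLT for $\vec\chi\mid\cS\cap\cB$ analogous to Claim~\ref{Claim_BS}, the first probability factorizes asymptotically over the $l$ distinct clauses of the cycle, each clause contributing the two-coordinate marginal of $\bar\mu$ at the relevant positions with values $(s_t,s_{t+1})$. Given $\vec\chi$, the two sub-bijections realize each of the $2l$ prescribed slot-clone edges with probability $(1+o(1))/(dn)$, yielding a combined factor $(1+o(1))(dn)^{-2l}$. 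Pairing the per-clause $\bar\mu$-factors with the per-variable clone-count factors from~(\ref{eqCountingCycles1}) edge-by-edge around the cycle, each variable with slot-signs $(s_{2h},s_{2h+1})$ should contribute one entry of $M_{s_{2h}s_{2h+1}}$ indexed by the $\chi$-state of its two incident slots, and summing over these internal states around the closed loop collapses to $\tr\prod_h M_{s_{2h}s_{2h+1}}=1+\delta_s$ via~(\ref{eqMatrices}). Combining with~(\ref{eqCountingCycles1}), summing over $i$, and averaging over uniform $\vec i$, I then expect to recover $\pr[\cC_{\hat\PHI}(\vec i,s)]\sim(1+\delta_s)\lambda_s/m$, the $(1+\delta_s)$ being the only discrepancy from the unconditioned answer in~(\ref{eqErwCs}).

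For the joint statement, I would invoke Claim~\ref{Fact_unicyclic}: since $\sum_s c_s=O(1)$, the prescribed short cycles are pairwise vertex-disjoint and anchored at distinct clauses with probability $1-o(1)$. On this event the two factorizations above extend to the larger (still bounded) index set---the $\vec\chi$-marginals factor across the disjoint clauses and the sub-matching probabilities factor across the disjoint slot-clone edges, both up to $1+o(1)$ since only $O(1)$ coordinates are prescribed out of $\Theta(n)$---yielding the product form $\prod_s((1+\delta_s)\lambda_s/m)^{c_s}$. The main obstacle I anticipate is the explicit identification of the matrix entries of $M_{\pm1}$: the $q$ versus $1-q$ asymmetry of the Boolean model, together with the $\cS$-conditioning that penalizes the all-false pattern, must be tracked carefully through the two-coordinate $\bar\mu$-marginals and the matching factor so that the per-variable entries align precisely with those in~(\ref{eqMatrices}) rather than some permuted or rescaled variant, and so that the $\chi$-state sum around the cycle closes as a genuine trace.
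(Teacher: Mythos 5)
The disjointness/independence reduction via Claim~\ref{Fact_unicyclic}, the factorization of $\hat\PHI$ (given $\vec\chi$ and $\hat\SIGMA$) into two uniform bijections with routing probability $(1+o(1))(dn)^{-2l}$, and the use of a local-CLT statement in the spirit of Claim~\ref{Claim_justAFewClauses} are all sound and in line with the paper. The gap is your opening reduction to $\hat\SIGMA\equiv+1$. The map you describe (negate $\hat\SIGMA(x)$ and negate $\chi_{ij}$ at the $2d$ slots matched to $x$) does preserve the PL3 constraint, the $q$-weight of $\vec\chi$ and the event $\cB$, but it does \emph{not} preserve the conditioning event $\cS$: flipping $x$ can turn a clause whose only slot with $\chi=+1$ sits on $x$ into an all-false clause, i.e.\ flipping one variable of a planted satisfying assignment need not yield a satisfying assignment. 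So the planted measure is not invariant under this flip, and the asymmetry between true and false (here $q\neq\tfrac12$ and the $\cS$-penalty on the all-false pattern) is exactly what the lemma has to quantify. The alternative, genuinely measure-preserving transformation (relabel the sign component of the clones of the flipped variables) changes the edge signs and hence maps a cycle with pattern $s$ to a cycle with a different pattern, so it also does not reduce the pattern-$s$ probability to the case $\hat\SIGMA\equiv+1$.

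This is not a repairable cosmetic issue, because after fixing $\hat\SIGMA\equiv+1$ the literal truth values on the cycle coincide with the signs, so the ``internal states'' you propose to sum over are forced to equal $s$ and no trace can emerge. Carrying out your computation in that reduced model gives, per clause, a factor $q_aq_b/(2q)$ (with the $a=b=-1$ correction), i.e.\ an overall correction $\prod_t 2q$ or $2(1-q)$ keyed to the products $s_{2t-1}s_{2t}$ of the two slots \emph{within each clause}; this differs from $1+\delta_s=\tr\prod_hM_{s_{2h}s_{2h+1}}$ (which pairs the two occurrences of each \emph{variable}) already at relative order $2^{-k}$ per clause, which is enormous compared with the corrections $\delta_s=O((2q-1)^l)$ that the lemma exists to identify and that feed into (\ref{eqNicksSumIsFinite}) and the martingale $W$. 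The paper's proof keeps $\hat\SIGMA$ random: it decomposes $\cC_{\hat\PHI}(\vec i,s)$ according to the $2^l$ possible truth values $\xi$ that $\hat\SIGMA$ assigns along the cycle, computes the per-clause conditional probabilities via Claim~\ref{Claim_justAFewClauses}, obtaining $M'_{\pm1}\sim\frac12M_{\pm1}$, and it is precisely the sum over the free $\xi$ around the closed loop that collapses to $\tr\prod_iM_{s_{2i}s_{2i+1}}$. Your argument needs this summation over the random assignment values restored (or an honest justification of an exchangeability statement that, as argued above, is false in this model).
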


\noindent
The proof of \Lem~\ref{Lemma_plantedCycles} is based on the following elementary observation.

\begin{claim}\label{Claim_justAFewClauses}
Let $\cI\subset[m]$ be a set of size $|\cI|\leq n^{1/3}$ and let $\tau=(\tau_i)_{i\in\cI}\in\Sigma^{\cI}$.
Further, let $\cE(\cI,\tau)$ be the event that for each $i\in\cI$ we have $(\hat\SIGMA(\hat\PHI[i,j]))_{j\in[k]}=\tau_i$.
Then
	\begin{align}\label{eqClaim_justAFewClauses}
	\pr\brk{\cE(\cI,\tau)}&\sim\prod_{i\in\cI}\frac{((1-q)q)^{k/2}\prod_{j=1}^k (q/(1-q))^{\tau_{ij}/2}}{2q}.
	\end{align}
\end{claim}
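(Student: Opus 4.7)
\medskip\noindent\textbf{Proof Plan for Claim~\ref{Claim_justAFewClauses}.} The plan is to observe that the event $\cE(\cI,\tau)$ is purely a statement about $\vec\chi$, and then evaluate the resulting probability by the same local limit theorem argument used in Claim~\ref{Claim_BS}. The key observation is that by construction \textbf{PL3} we have, for every $(i,j)$,
\[
\hat\SIGMA(\hat\PHI[i,j]) \;=\; \sign_{\hat\PHI}(i,j)\,\hat\SIGMA(\partial_{\hat\PHI}(i,j)) \;=\; \chi_{ij},
\]
once $\hat\SIGMA$ is extended to triples in the obvious way through literals. Hence the event $\cE(\cI,\tau)$ coincides with $\{\chi_{ij}=\tau_{ij}\;\forall i\in\cI,\,j\in[k]\}$, and by \textbf{PL2} the claim reduces to computing $\pr[\chi_{ij}=\tau_{ij}\;\forall i\in\cI,\,j\in[k]\mid\cS\cap\cB]$ under the product Bernoulli measure used in \Sec~\ref{Sec_fm}.

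Next, I would split the conditioning via Bayes. Write $r_i = \#\{j:\tau_{ij}=1\}$ and $r=\sum_{i\in\cI}r_i$. Because $\tau_i\in\Sigma$ the clauses in $\cI$ automatically satisfy the $\cS$-requirement, so by independence of the $\chi_{ij}$,
\[
\pr\brk{\cE(\cI,\tau)\cap\cS\cap\cB}
\;=\;\prod_{i\in\cI}q^{r_i}(1-q)^{k-r_i}\cdot\pr\brk{\cS'\cap\cB'},
\]
where $\cS'$ and $\cB'$ are the analogues of $\cS$ and $\cB$ restricted to the clauses in $[m]\setminus\cI$, with the $\cB'$-target reduced from $dn$ to $dn-r$. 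The factor $\pr[\cS']=(1-(1-q)^k)^{m-|\cI|}$ is immediate. For $\pr[\cB'\mid\cS']$, the relevant sum $X'=\sum_{i\notin\cI,\,j}\vecone\{\chi_{ij}=1\}$ has conditional mean $(m-|\cI|)kq/(1-(1-q)^k)=dn-k|\cI|/2$ (using (\ref{eqq})) and variance $\Theta(m)$, while the deviation of the target from the mean is $|k|\cI|/2-r|=O(|\cI|)=O(n^{1/3})$. Since $(n^{1/3})^2/m=o(1)$, the local limit theorem~\cite{DavisMcDonald} yields
\[
\pr\brk{\cB'\mid\cS'}\;\sim\;\sqrt{\frac{1-(1-q)^k}{2\pi(m-|\cI|)kq(1-q)}}
\]
uniformly over the allowed $\tau$, i.e.\ the Gaussian exponential factor is $1+o(1)$.

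Finally, dividing by $\pr[\cS\cap\cB]$ as computed in Claims~\ref{Claim_S}--\ref{Claim_BS} makes the square-root prefactors cancel up to $\sqrt{m/(m-|\cI|)}=1+o(1)$, and I am left with
\[
\pr\brk{\cE(\cI,\tau)}\;\sim\;\prod_{i\in\cI}\frac{q^{r_i}(1-q)^{k-r_i}}{1-(1-q)^k}.
\]
Using $1-(1-q)^k=2q$ from (\ref{eqq}) together with the factorisation
$q^{r_i}(1-q)^{k-r_i}=((1-q)q)^{k/2}\prod_{j}(q/(1-q))^{\tau_{ij}/2}$
produces exactly the right-hand side of (\ref{eqClaim_justAFewClauses}). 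The only subtle point is the uniformity of the local limit approximation as $\tau$ varies, and this is precisely what the hypothesis $|\cI|\leq n^{1/3}$ is tailored to: it forces the displacement of the target from the conditional mean to be $o(\sqrt{\Var(X'\mid\cS')})$, which I expect to be the main (and essentially only) nontrivial step.
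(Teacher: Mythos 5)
Your proposal is correct and follows essentially the same route as the paper: identify $\cE(\cI,\tau)$ with the event $\{\chi_{ij}=\tau_{ij}\}$ under the product measure conditioned on $\cS\cap\cB$, and then use Bayes together with the local limit theorem (as in Claim~\ref{Claim_BS}) to show that the conditioning on $\cB$ only contributes a $1+o(1)$ factor because $|\cI|\leq n^{1/3}$ shifts the target by $o(\sqrt m)$. Your explicit computation of $\pr[\cB'\mid\cS']$ is just the paper's statement $\pr[\cB\mid\cS]\sim\pr[\cB\mid\cS,\cE(\cI,\tau)]$ written out in detail.
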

\begin{proof}
Since $1-(1-q)^k=2q$ by the definition of $q$, the r.h.s.\ of (\ref{Claim_justAFewClauses}) is just the probability
that $\vec\chi_{ij}=\tau_{ij}$ for all $i\in\cI$, $j\in[k]$ given the event $\cS$.
Moreover, because $|\cI|\leq n^{1/3}$ a similar application of the local limit theorem as in the proof of Claim~\ref{Claim_BS}
shows that $\pr\brk{\cB|\cS}\sim\pr\brk{\cB|\cS,\cE(\cI,\tau)}$.
Therefore, the assertion follows from Bayes' rule.
\end{proof}

\begin{proof}[Proof of \Lem~\ref{Lemma_plantedCycles}]
A similar calculation as in the proof of Claim~\ref{Fact_unicyclic} shows that we only need to consider families of vertex-disjoint cycles.
Further, because the total number of vertices involved in the cycles remains bounded as $n\to\infty$, the events $\cC(\vec i(s,a))$ are asymptotically independent.
Therefore, we are just going to calculate the probability of a single event $\cC(\vec i,s)$ for a random $\vec i\in[m]$.

We can write $\cC(\vec i,s)$ as a disjoint union of sub-events in which we specify the truth values that $\hat\SIGMA$ assigns to the
literals in the order in which they appear along the cycle.
Thus, let $\xi=(\xi_2,\ldots,\xi_{2l+1})\in\{\pm1\}^{2l}$ be a sequence such that $\xi_{2h}\xi_{2h+1}=s_{2h}s_{2h+1}$ for all $h$.
Further, set $\xi_1=\xi_{2l+1}$, $j_1=j_{2l+1}$.
Moreover, let $i=(i_1,\ldots,i_l)\in[m]^l$ be a sequence of pairwise distinct clause indices and let $(j,g)\in I(s)$.
Let $\cD_h(i,j,g,\xi,s)$ be the event that
	$\vec\chi_{i_h,j_{2h-1}}=\xi_{2h+1}$ and $\vec\chi_{i_h,j_{2h}}=\xi_{2h+2}$ and let $\cD(i,j,g,\xi,s)=\bigcap_{h=1}^{l}\cD_h(i,j,g,\xi,s)$.
By symmetry, the probability $$M'_{s_{2h}s_{2h+1}}(\xi_{2h},\xi_{2(h+1)})=\pr[\cD_h(i,j,g,\xi,s)]$$
	depends on $s_{2h}s_{2h+1}$ and $\xi_{2h},\xi_{2(h+1)}$
only.
In fact, using Claim~\ref{Claim_justAFewClauses} we can work out the probabilities  of the eight possible cases easily.
	\begin{description}
	\item[Case 1: $s_{2h}s_{2h+1}=1$]
		there are four sub-cases depending on the truth values $\xi_{2h},\xi_{2(h+1)}$.
		\begin{description}
		\item[Case 1a: $\xi_{2h}=\xi_{2(h+1)}=1$] clause $i_h$ is satisfied because $\xi_{2h}=1$.
			Therefore,  by 
			the probability that $\xi_{2(h+1)}=\xi_{2h+1}=1$ comes to $M'_{1}(1,1)\sim q^2/(2q)=q/2$.
		\item[Case 1b: $\xi_{2h}=-\xi_{2(h+1)}=1$]
			clause $i_h$ is satisfied due to $\xi_{2h}=1$.
			Hence, $M'_{1}(1,-1)\sim
				(1-q)^2/(2q)$.
		\item[Case 1c: $\xi_{2h}=-\xi_{2(h+1)}=-1$]
			clause $i_h$ is satisfied as $\xi_{2h+1}=\xi_{2(h+1)}=1$.
			Hence,
				$M'_1(-1,1)\sim q^2/(2q)=q/2.$
		\item[Case 1d: $\xi_{2h}=\xi_{2(h+1)}=-1$]
			one of the $k-2$ remaining literals of clause $i_h$ has to take the value 1 to satisfy the clause.
			Since $(1-q)^k=1-2q$ and thus $(1-q)^{k-2}=(1-2q)/(1-q)^2$,  we get
				$$M'_1(-1,-1)\sim(1-q)^2(1-(1-q)^{k-2})/(2q)=q/2.$$
		\end{description}
	\item[Case 2: $s_{2h}s_{2h+1}=-1$]
		once more there are four sub-cases.
		\begin{description}
		\item[Case 2a: $\xi_{2h}=\xi_{2(h+1)}=1$] clause $i_h$ is satisfied because $\xi_{2h}=1$.
			Therefore, $M'_{-1}(1,1)\sim q(1-q)/(2q)=(1-q)/2$.
		\item[Case 2b: $\xi_{2h}=-\xi_{2(h+1)}=1$]
			clause $i_h$ is satisfied due to $\xi_{2h}=1$.
			Hence, $M'_{-1}(1,-1)\sim q(1-q)/(2q)=(1-q)/2$.
		\item[Case 2c: $\xi_{2h}=-\xi_{2(h+1)}=-1$]
			for clause $i_h$ to be satisfied one of the $k-2$ literals in the clause that do not belong to the cycle has to be true.
			Thus,
				$$M'_{-1}(-1,1)\sim \frac{q(1-q)(1-(1-q)^{k-2})}{2q}=\frac{q((1-q)^2-(1-q)^k)}{2q(1-q)}=\frac{q^2}{2(1-q)}.$$
		\item[Case 2d: $\xi_{2h}=\xi_{2(h+1)}=-1$]
			clause $i_h$ is satisfied as $\xi_{2h+1}=1$.
			Therefore,
				$M'_{-1}(-1,-1)\sim q(1-q)/(2q)=(1-q)/2.$
		\end{description}		
	\end{description}
Moreover, the by Claim~\ref{eqClaim_justAFewClauses} the events $(\cD_h(i,j,g,\xi,s))_h$ are asymptotically independent.
Therefore, taking the union over all possible truth values $\xi$, we obtain (following similar arguments in~\cite{Janson})
	\begin{align*}
	\pr\brk{\bigcup_\xi\cD(i,j,g,\xi,s)}&
		\sim\sum_\xi\prod_{h=1}^{l}M'_{s_{2h}s_{2h+1}}(\xi_{2h},\xi_{2(h+1)})=
			\tr\prod_{i=1}^lM_{s_{2i}s_{2i+1}}'.
	\end{align*}

Further, with $M_{\pm1}$ the matrices {from (\ref{eqMatrices})}, we see that $M'_{\pm1}\sim\frac12M_{\pm1}$.
Hence, as truth values of the $2l$ literals on the cycle determine the truth values of the $l$ literals on the cycle, we obtain
	\begin{align*}
	\pr\brk{\cC_{\hat\PHI}(\vec i,s)}&\sim2^{l}\pr\brk{\cC_{\PHI}(\vec i,s)}\tr\prod_{i=1}^lM'_{s_{2i}s_{2i+1}}
		=\pr\brk{\PHI\in\cC(\vec i,s)}\tr\prod_{i=1}^lM_{s_{2i}s_{2i+1}}.
	\end{align*}
Thus, the assertion follows from \Lem~\ref{Lemma_cycleMoments}.
\end{proof}

\noindent
While \Lem~\ref{Lemma_plantedCycles} puts us in a position to calculate the covariance of $Z$ and the cycle counts $C_s$,
\Prop~\ref{Prop_shortCyclesConditional} deals with the covariance of $Z_\mu$ and the $C_s$.
Hence, we need to consider a variant of the planted distribution that fixes the clause marginal $\mu$.
We recall the event $\cS_\mu$ from (\ref{eqSmu}).

\begin{lemma}\label{Lemma_muCycles}
For any $\omega>0$ the following is true.
Let $S\subset\bigcup_{l\geq1}\{\pm1\}^{2l}$ be finite and let $c=(c_s)_{s\in S}$ be a non-negative integer vector.
Let $\vec i=(\vec i(s,a))_{s\in S,a\in[c_s]}$ be a random vector whose entries $i(s,a)\in[m]$ are independent and uniformly distributed.
Further, let $\cC=\bigcap_{s\in S,a\in[c_s]}\cC_{\hat\PHI}(\vec i(s,a),s)$.
Then
	\begin{align*}
	\pr\brk{\cS_\mu|\cS\cap\cC}\sim\pr\brk{\cS_\mu|\cS}\qquad\mbox{uniformly for all }\mu\in\cM_\omega.
	\end{align*}
\end{lemma}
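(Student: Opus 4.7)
The plan is to apply Bayes' rule to reduce the statement to a symmetric one about cycle probabilities. Writing
\begin{align*}
\frac{\pr\brk{\cS_\mu\mid\cS\cap\cC}}{\pr\brk{\cS_\mu\mid\cS}}=\frac{\pr\brk{\cC\mid\cS\cap\cS_\mu}}{\pr\brk{\cC\mid\cS}},
\end{align*}
it suffices to prove that $\pr\brk{\cC\mid\cS\cap\cS_\mu}\sim\pr\brk{\cC\mid\cS}$ uniformly for $\mu\in\cM_\omega$. The denominator was essentially computed in \Lem~\ref{Lemma_plantedCycles}, yielding $(1+o(1))\prod_s\bc{(1+\delta_s)\lambda_s/m}^{c_s}$, and the strategy is to recompute the numerator in the same fashion, tracking the effect of the additional conditioning.

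The main step is to replace Claim~\ref{Claim_justAFewClauses} by its analog conditional on $\cS\cap\cS_\mu$. For a bounded set $\cI\subset[m]$ of clauses with prescribed patterns $\tau\in\Sigma^{\cI}$, the rows of $\vec\chi$ are exchangeable given $\cS_\mu$ with empirical distribution $\mu$, so a standard falling-factorial computation yields
\begin{align*}
\pr\brk{\cE(\cI,\tau)\mid\cS\cap\cS_\mu}=\prod_{\sigma\in\Sigma}\frac{(m\mu(\sigma))^{\underline{|\{i\in\cI:\tau_i=\sigma\}|}}}{m^{\underline{|\cI|}}}\sim\prod_{i\in\cI}\mu(\tau_i).
\end{align*}
Since only the pairwise marginals of the prescribed patterns at the cycle positions enter the calculation, running the proof of \Lem~\ref{Lemma_plantedCycles} verbatim with this substitution produces transfer matrices $M'_{\pm1}(\mu)$ whose entries are pairwise marginals of $\mu$ rather than of $\bar\mu$. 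The resulting formula reads $\prod_s\bc{(1+\delta_s(\mu))\lambda_s/m}^{c_s}$, where $\delta_s(\mu)$ is defined exactly as $\delta_s$ but with $M_{\pm1}$ built from $\mu$-marginals.

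Finally, I would bound the perturbation. Since $\mu\in\cM_\omega$ gives $\|\mu-\bar\mu\|_2\leq\omega m^{-1/2}$ and $\Sigma$ is of bounded size, also $\|\mu-\bar\mu\|_1=O(m^{-1/2})$, with implicit constant depending only on $\omega$ and $k$. Consequently every pairwise marginal of $\mu$ differs from the corresponding marginal of $\bar\mu$ by $O(m^{-1/2})$, so each entry of $M'_{\pm1}(\mu)$ differs from that of $M'_{\pm1}$ by $O(m^{-1/2})$, and hence $\delta_s(\mu)-\delta_s=O(m^{-1/2})$. Because $\sum_{s\in S}c_s$ is fixed as $n\to\infty$, the overall ratio $\pr\brk{\cC\mid\cS\cap\cS_\mu}/\pr\brk{\cC\mid\cS}$ is $(1+O(m^{-1/2}))^{O(1)}=1+o(1)$ uniformly for $\mu\in\cM_\omega$. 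The main technical obstacle is ensuring that both the exchangeability computation and the resulting $M'_{\pm1}(\mu)$ perturbation produce error bounds with constants depending only on $\omega$ (and not on the individual $\mu$), which is where the quantitative form of the $\cM_\omega$ definition is essential.
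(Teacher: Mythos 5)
Your argument is correct, but it runs in the opposite direction from the paper's. The paper conditions on the cycle event $\cC$ and computes $\pr\brk{\cS_\mu|\cS\cap\cC}$ directly: given $\cC$, the event $\cS_\mu$ becomes a statement about the empirical pattern distribution on the $m-O(1)$ clauses off the cycles, which is a multinomial probability $\binom{m'}{m'\mu'}(q(1-q))^{dn}/\pr[\cS]$ in analogy to Claim~\ref{Lemma_Smu}; since removing $O(1)$ clauses shifts $\mu$ to a $\mu'$ with $\|\mu-\mu'\|_2=O(1/m)$ and $\KL{\mu}{\bar\mu}=O(1/m)$ on $\cM_\omega$, the exponent $m\KL{\mu'}{\bar\mu}$ changes only by $o(1)$, giving the claim without ever revisiting the cycle computation. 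You instead flip by Bayes to $\pr\brk{\cC|\cS\cap\cS_\mu}\sim\pr\brk{\cC|\cS}$ and rerun \Lem~\ref{Lemma_plantedCycles} under the $\cS_\mu$-conditioned pattern distribution, replacing Claim~\ref{Claim_justAFewClauses} by the exchangeability/falling-factorial estimate (which is valid: given $\cS_\mu$ the clause patterns are a uniformly random arrangement of the multiset with counts $m\mu(\sigma)$, and $\mu(\sigma)=\bar\mu(\sigma)+O(m^{-1/2})$ is bounded away from $0$ uniformly on $\cM_\omega$) and then perturbing the transfer matrices entrywise by $O(m^{-1/2})$. This is essentially a direct proof of \Cor~\ref{Cor_plantedCycles}, from which the lemma follows by the same Bayes identity, so it is a legitimate alternative; what the paper's route buys is brevity (no repetition of the case analysis, and no need to worry that for a non-product $\mu$ the ``matrices'' $M'_{\pm1}(\mu)$ are really position-dependent, since the pairwise marginals of $\mu$ depend on which two slots of the clause lie on the cycle), while your route makes explicit why the extra conditioning only shifts each $\delta_s$ by $O(m^{-1/2})$. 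Two small points you should state rather than leave inside ``verbatim'': the Fact~\ref{Fact_unicyclic}-type reduction to vertex-disjoint cycles must also be checked under the $\cS_\mu$-conditioning (it survives because conditioning on $\cS_\mu$ changes the pattern probabilities of any bounded clause set only by $O(1)$ factors), and the matching step PL3 is unaffected because $\cS_\mu\subset\cB$ fixes the true/false totals at $dn$; with those remarks and the observation that $1+\delta_s$ is bounded away from $0$ for each fixed $s$, your uniform $1+o(1)$ conclusion stands.
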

\begin{proof}
Suppose that the event $\cS\cap\cC$ occurs.
Let $J\subset[m]$ be the set of all indices of clauses that participate in the cycles corresponding to $\cC$.
Then $m'=|J|=m-O(1)$.
Further, for each $\sigma\in\Sigma$ let $m\mu''(\sigma)$ be the number of clauses $i\in [m]\setminus J$ that are satisfied according to $\sigma\in\Sigma$.
Additionally, let $m\mu'(\sigma)$ be such that $m(\mu'(\sigma)+\mu''(\sigma))=\mu(\sigma)$.
Finally, let $\cS_\mu'$ be the event that the empirical distribution of patterns on the clauses $\bar J=[m]\setminus J$ works out to be precisely $\mu'$.
Then given $\cC$ the event $\cS_\mu$ occurs iff $\cS_\mu'$ occurs.
Hence, in analogy to Claim~\ref{Lemma_Smu} we have
	\begin{align}\nonumber
		\pr\brk{\cS_\mu|\cS\cap\cC}&=
		\bink{m'}{m'\mu'}\frac{(q(1-q))^{dn}}{\pr\brk{\cS}}\sim
			(2\pi m)^{1-2^{k-1}}\brk{\prod_{\sigma\in\Sigma}\mu'(\sigma)}^{-1/2}\exp\brk{-|\bar J|\KL{\mu'}{\bar\mu}}\\
		&\sim\frac{(2\pi m)^{1-2^{k-1}}}{\prod_{\sigma\in\Sigma}\sqrt{\bar\mu(\sigma)}}
			\exp\brk{-(m-O(1))\KL{\mu'}{\bar\mu}}\qquad\mbox{uniformly for }\mu\in\cM_\omega.
				\label{eqLemma_muCycles1}
	\end{align}
Further, since $|J|=O(1)$ we have $\|\mu-\mu'\|_2=O(1/m)$, whence $\KL{\mu'}{\bar\mu}-\KL{\mu}{\bar\mu}=o(m^{-1})$.
Moreover, as $\|\mu-\bar\mu\|_2=O(m^{-1/2})$, we have $\KL{\mu}{\bar\mu}=O(1/m)$.
Therefore, (\ref{eqLemma_muCycles1}) implies that uniformly for $\mu\in\cM_\omega$,
	\begin{align}\nonumber
		\pr\brk{\cS_\mu|\cS\cap\cC}&\sim
			\frac{(2\pi m)^{1-2^{k-1}}}{\prod_{\sigma\in\Sigma}\sqrt{\bar\mu(\sigma)}}
			\exp\brk{-m\KL{\mu}{\bar\mu}}\sim\pr\brk{\cS_\mu|\cS},
	\end{align}
as claimed.
\end{proof}

\noindent
Combining \Lem s~\ref{Lemma_plantedCycles} and~\ref{Lemma_muCycles}, we obtain

\begin{corollary}\label{Cor_plantedCycles}
Let $\omega>0$.
With the notation from \Lem~\ref{Lemma_plantedCycles} we have
	\begin{align*}
	\pr\brk{\bigcap_{s\in S,a\in[c_s]}\cC_{\hat\PHI}(\vec i(s,a),s)\bigg|\cS_\mu
		}&\sim\prod_{s\in S}\bcfr{(1+\delta_s)\lambda_s}{m}^{c_s}
		\qquad\mbox{uniformly for all }\mu\in\cM_\omega.
	\end{align*}
\end{corollary}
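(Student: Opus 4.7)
The plan is to combine \Lem s~\ref{Lemma_plantedCycles} and~\ref{Lemma_muCycles} via a short Bayes' rule manipulation, with the only subtle point being the preservation of uniformity in $\mu\in\cM_\omega$. Write $\cC=\bigcap_{s\in S,a\in[c_s]}\cC_{\hat\PHI}(\vec i(s,a),s)$ for the event whose planted probability we wish to calculate. Because the planted distribution from \Sec~\ref{Sec_fm} conditions on $\cS\cap\cB$ in step \textbf{PL2}, both $\cS$ and $\cB$ have probability one under $\hat\PHI$. Moreover, since $\mu\in\cM$ is by definition a probability distribution supported on $\Sigma$ satisfying~(\ref{eqMuSumsToZero}), the event $\cS_\mu$ is contained in $\cS\cap\cB$.

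First I apply Bayes' rule under the planted distribution to obtain
\begin{align*}
\pr\brk{\cC\mid\cS_\mu}=\frac{\pr\brk{\cS_\mu\mid\cC}\,\pr\brk{\cC}}{\pr\brk{\cS_\mu}}.
\end{align*}
Using $\pr[\cS\mid\cC]=\pr[\cS]=1$, I rewrite $\pr[\cS_\mu\mid\cC]=\pr[\cS_\mu\mid\cS\cap\cC]$ and $\pr[\cS_\mu]=\pr[\cS_\mu\mid\cS]$. Next I invoke \Lem~\ref{Lemma_muCycles}, which supplies $\pr[\cS_\mu\mid\cS\cap\cC]\sim\pr[\cS_\mu\mid\cS]$ uniformly for $\mu\in\cM_\omega$. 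Substituting into the Bayes identity yields $\pr[\cC\mid\cS_\mu]\sim\pr[\cC]$ uniformly on $\cM_\omega$. Finally, \Lem~\ref{Lemma_plantedCycles} evaluates $\pr[\cC]\sim\prod_{s\in S}((1+\delta_s)\lambda_s/m)^{c_s}$, producing the desired asymptotic equivalence.

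Since the conclusion of \Lem~\ref{Lemma_muCycles} is already uniform on $\cM_\omega$ and \Lem~\ref{Lemma_plantedCycles} is completely $\mu$-free, the uniformity assertion in the corollary follows essentially for free from the composition of the two asymptotic equivalences. I therefore do not anticipate any substantive obstacle; the only point that requires a (trivial) check is the containment $\cS_\mu\subseteq\cS\cap\cB$, which is immediate from the definition of $\cM$ and~(\ref{eqMuSumsToZero}), ensuring that all the conditional probabilities above are well-defined for large $n$.
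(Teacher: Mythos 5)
Your Bayes'-rule combination of Lemmas~\ref{Lemma_plantedCycles} and~\ref{Lemma_muCycles}, using that $\cS$ and $\cB$ hold almost surely under the planted distribution so that conditioning on $\cS_\mu$ reduces to the comparison $\pr[\cS_\mu|\cS\cap\cC]\sim\pr[\cS_\mu|\cS]$, is correct and is precisely the combination the paper intends (it gives no further argument beyond ``combining'' the two lemmas). The uniformity bookkeeping you note is also handled exactly as in the paper, since $\pr[\cC]$ is $\mu$-free and Lemma~\ref{Lemma_muCycles} is already uniform on $\cM_\omega$.
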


\noindent
Now, (\ref{eqProp_shortCyclesConditional}) follows from Fact~\ref{Fact_planting}, \Cor~\ref{Cor_plantedCycles} and
the standard result on convergence to the Poisson distribution (e.g., \cite[\Thm~1.23]{Bollobas}).
Hence, we are left to prove

\begin{lemma}
The series $\sum_{l,s}\delta_s^2\lambda_s$ converges and satisfies (\ref{eqNicksSumIsFinite}).
\end{lemma}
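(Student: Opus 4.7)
The plan is to evaluate $\sum_{s\in\{\pm1\}^{2l}}\lambda_s\delta_s^2$ in closed form for each $l$ and then recognise the resulting series in $l$ as a logarithm. From (\ref{eqMatricesEigenvectors2}) we have $\delta_s^2=(2q-1)^{2l}=(1-4q(1-q))^l$, which depends on $l$ alone and not on the particular sign pattern $s$. Hence
\begin{align*}
\sum_{s\in\{\pm1\}^{2l}}\lambda_s\delta_s^2=(1-4q(1-q))^l\sum_{s\in\{\pm1\}^{2l}}\lambda_s,
\end{align*}
and the problem reduces to computing $\sum_s\lambda_s$.

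For the second step I would reparametrise the sum over $s=(s_2,\ldots,s_{2l+1})\in\{\pm1\}^{2l}$ by the products $t_i=s_{2i}s_{2i+1}\in\{\pm1\}$ for $i=1,\ldots,l$. For each target vector $t\in\{\pm1\}^l$ there are exactly $2^l$ pre-images $s$, because one can freely choose $s_{2i}$ for $i=1,\ldots,l$ and then $s_{2i+1}=t_i s_{2i}$ is determined. Inserting the definition (\ref{eqLambdaDelta}) of $\lambda_s$ and using the factorisation
\begin{align*}
\sum_{t\in\{\pm1\}^l}\bcfr{d-1}{d}^{\frac12\sum_{i=1}^l t_i}=\bc{\sqrt{(d-1)/d}+\sqrt{d/(d-1)}}^l=\bcfr{2d-1}{\sqrt{d(d-1)}}^l,
\end{align*}
a straightforward cancellation yields $\sum_{s\in\{\pm1\}^{2l}}\lambda_s=((k-1)(2d-1))^l/(2l)$. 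Consequently, with $x=(2d-1)(k-1)(1-4q(1-q))$, we obtain
\begin{align*}
\sum_{l\geq1}\sum_{s\in\{\pm1\}^{2l}}\lambda_s\delta_s^2=\sum_{l\geq1}\frac{x^l}{2l}=-\frac12\ln(1-x),
\end{align*}
provided $0\le x<1$, which gives (\ref{eqNicksSumIsFinite}) and the convergence of the series simultaneously.

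The remaining point, and the one I expect to be the main (though still modest) obstacle, is verifying $x<1$ under the hypothesis (\ref{eqAssumptionOnd}). Here I would combine the asymptotic expansion $q=\frac12-2^{-k-1}+O(k/4^k)$ from (\ref{eqFirstMomentFormula2}), which gives $1-4q(1-q)=(2q-1)^2=4^{-k}(1+o_k(1))$, with the upper bound $d\leq k\,2^{k-1}\ln 2$ that follows directly from (\ref{eqAssumptionOnd}). Multiplying these estimates shows $(2d-1)(k-1)(1-4q(1-q))=O(k^2\ln 2\cdot 2^{-k})$, which is certainly less than $1$ once $k\geq k_0$, completing the proof.
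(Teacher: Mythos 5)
Your proof is correct and follows essentially the same route as the paper: the paper likewise squares $\delta_s$ via (\ref{eqMatricesEigenvectors2}), sums $\lambda_s$ over sign patterns (the computation it compresses into ``merely plugging in'' is exactly your reparametrisation yielding $((2d-1)(k-1))^l/(2l)$), and then identifies the series $\sum_l x^l/(2l)$ with $-\tfrac12\ln(1-x)$ after checking $x<1$ from $q=\tfrac12+O(2^{-k})$ and the crude bound on $d$ from (\ref{eqAssumptionOnd}). No gaps; your verification of $x<1$ is, if anything, slightly more explicit than the paper's.
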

\begin{proof}
Being the solution to (\ref{eqq}), $q$ satisfies $q=\frac12+O(2^{-k})$.
Hence, our assumption that $d\leq k2^k$ ensures that
	\begin{align*}
	|(2d-1)(k-1)(1-4q(1-q))|<1.
	\end{align*}
{Therefore, merely plugging in the expressions from (\ref{eqLambdaDelta}) and (\ref{eqMatricesEigenvectors2}), 
we obtain
	\begin{align*}
	\sum_{l\geq1}\sum_{s\in\{\pm1\}^{2l}} \delta_s^2\lambda_s=\sum_{l\geq1}\frac1{2l}\brk{(2d-1)(k-1)(1-4q(1-q))}^l
		=-\frac12\ln\bc{1-(2d-1)(k-1)(1-4q(1-q))}<\infty,
	\end{align*}
as claimed.}
\end{proof}

\section{The second moment}\label{Sec_sm}

\subsection{Outline}
In this section we prove \Prop~\ref{Prop_sm}.
Let $Z_{\alpha,\mu}^\tensor$ be the number of pairs $(\tau_1,\tau_2)$ of satisfying assignments
such that $\mu=\mu(\PHI,\tau_1)=\mu(\PHI,\tau_2)$ and such that $\sum_{i=1}^n\vecone\{\tau_1(x_i)=\tau_2(x_i)\}=\alpha$.
Then by the linearity of expectation
	\begin{align}\label{eqsmmDist}
	\Erw[Z_\mu^2]&=\sum_{\alpha=0}^n\Erw[Z_{\mu,\alpha}^\tensor].
	\end{align}
We {will} evaluate the sum on the r.h.s.\ of (\ref{eqsmmDist}) in two steps.
The main step is to calculate the contribution of $\alpha$ close to $n/2$.

\begin{lemma}\label{Lemma_centre}
Uniformly for $\mu\in\cM_\omega$ we have
	\begin{align*}
	\lim_{a\to\infty}\lim_{n\to\infty}\sum_{\alpha:|\alpha-\frac n2|\leq a\sqrt n}\frac{\Erw[Z_{\mu,\alpha}^\tensor]}{\Erw[Z_\mu]^2}
		=\bc{1-(2d-1)(k-1)(1-4q(1-q))}^{-1/2}.
	\end{align*}
\end{lemma}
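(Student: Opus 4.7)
The plan is to evaluate $\Erw[Z_{\mu,\alpha}^\tensor]$ by a bivariate analog of the first-moment computation from Section~\ref{Sec_fm}, then sum the resulting Gaussian over $\alpha$ with $|\alpha-n/2|\le a\sqrt n$ and recover the constant $(1-(2d-1)(k-1)(1-4q(1-q)))^{-1/2}$. By symmetry of the random bijection~(\ref{eqConfModel}), the probability that a fixed pair $(\tau_1,\tau_2)$ of assignments with overlap $\alpha$ consists of two satisfying assignments with the same clause-pattern distribution $\mu$ depends only on $\alpha$ and $\mu$; writing this common probability as $p_n(\alpha,\mu)$ we have
$$\Erw[Z_{\mu,\alpha}^\tensor] = 2^n \bink{n}{\alpha} p_n(\alpha,\mu).$$

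To compute $p_n(\alpha,\mu)$, I would introduce jointly distributed $\pm1$ vectors $(\vec\chi^{(1)},\vec\chi^{(2)})$ on $[m]\times[k]$ whose distribution matches that of $(\sign_{\PHI}(i,j)\tau_r(\partial_{\PHI}(i,j)))_{r=1,2}$ under the random bijection. The key observation is that $\chi^{(1)}_{ij}=\chi^{(2)}_{ij}$ on precisely the $2d\alpha$ slots whose variable lies in the overlap set and $\chi^{(1)}_{ij}=-\chi^{(2)}_{ij}$ on the remaining $2d(n-\alpha)$ slots, while the partition of clause slots into agreement and disagreement classes is uniformly random subject to the degree constraints. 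Conditional on this partition and on the two balance events $\sum_{i,j}\chi^{(r)}_{ij}=0$, the entries become products of independent Bernoullis, so a bivariate analog of the Claim~\ref{Claim_S}--Claim~\ref{Claim_BS}--Claim~\ref{Lemma_Smu} chain combined with the local limit theorem yields an explicit expression $\Erw[Z_{\mu,\alpha}^\tensor]=n^{-c(k)}\exp(n\Phi(\alpha/n,\mu))(1+o(1))$ for a smooth rate function $\Phi(\beta,\mu)=2\ln2-H(\beta)+\psi(\beta,\mu)$, where $H$ denotes binary entropy. Consistency with \Prop~\ref{Prop_fm} forces $\Phi$ to be maximised at $\beta=1/2$ with $\Phi(1/2,\mu)=2n^{-1}\ln\Erw[Z_\mu]+O(1/n)$, so that a Taylor expansion around $\beta=1/2$ delivers
$$\frac{\Erw[Z_{\mu,\alpha}^\tensor]}{\Erw[Z_\mu]^2}=(1+o(1))\frac{C}{\sqrt n}\exp\bigl(-\tfrac{1}{2}\gamma(\alpha-n/2)^2/n\bigr),$$
with curvature $\gamma=-\Phi''(1/2,\mu)>0$ and an explicit prefactor $C$ gathering the polynomial corrections from Stirling's formula and the local limit theorem. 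Summing over integer $\alpha$ in the window $|\alpha-n/2|\le a\sqrt n$ and letting $a\to\infty$ yields the Gaussian integral $C\sqrt{2\pi/\gamma}$.

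The main obstacle is to verify that $C\sqrt{2\pi/\gamma}$ equals $(1-(2d-1)(k-1)(1-4q(1-q)))^{-1/2}$ and, in particular, that it is independent of $\mu\in\cM_\omega$. Uniformity in $\mu$ follows because $\mu$ enters $\Phi$ only through the $\KL{\mu}{\bar\mu}$ term which is $O(1/m)$ on $\cM_\omega$ by the analysis behind~(\ref{eqProp_fm_2}), so its contribution to $\gamma$ is asymptotically negligible. The identification of the limit is in essence a spectral calculation: the Hessian at the balanced point $\beta=1/2$ reflects the covariance of the bivariate Bernoulli pair attached to a random clause slot under the agreement/disagreement decomposition, and the corresponding transfer operator is precisely the pair $M_{\pm1}$ from~(\ref{eqMatrices}). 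The non-trivial eigenvalue $2q-1$ of $M_{\pm1}$ recorded in~(\ref{eqMatricesEigenvectors}) enters squared as $(2q-1)^2=1-4q(1-q)$, while the combinatorial factor $(2d-1)(k-1)$ arises from counting the ways that a two-step walk can return to a clause using one of $k-1$ other slots and then to a variable using one of $2d-1$ other literal copies. The resulting limit can then be matched to $\exp(\sum_{l,s}\lambda_s\delta_s^2)$ via the identity $\sum_{l\ge1}x^l/(2l)=-\tfrac{1}{2}\ln(1-x)$ used in~(\ref{eqNicksSumIsFinite}), confirming that Lemma~\ref{Lemma_centre} delivers exactly the factor $\Erw[W^2]$ predicted by the small-subgraph-conditioning heuristic.
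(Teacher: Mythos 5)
Your overall frame --- writing $\Erw[Z_{\mu,\alpha}^\tensor]=2^n\bink n\alpha p_n(\alpha,\mu)$, showing the summand has a Gaussian profile in $\alpha$ around $n/2$, and evaluating a Gaussian sum --- is the same as the paper's, but the proposal stops exactly where the content of the lemma lies. You never actually compute the curvature $\gamma$ or the prefactor $C$; you posit a smooth rate function $\Phi$, assert it peaks at $\beta=1/2$, and then ``verify'' that $C\sqrt{2\pi/\gamma}$ equals $(1-(2d-1)(k-1)(1-4q(1-q)))^{-1/2}$ by matching it to $\exp(\sum_{l,s}\lambda_s\delta_s^2)$, i.e.\ to the small-subgraph-conditioning prediction. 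That is circular: \Lem~\ref{Lemma_centre}, together with \Prop~\ref{Prop_sm} and \Lem~\ref{Thm_sscFamily}, is precisely what turns that prediction into a theorem, so the constant cannot be fixed by appealing to it. Moreover, the intermediate claim that after conditioning ``the entries become products of independent Bernoullis'' breaks down at the point where it is needed: once you also condition on $\cS_\mu^\tensor$ (both coordinates having clause statistics exactly $\mu$), the agreement count $A$ is a statistic of dependent entries, and it is its conditional law that governs the $\alpha$-profile. The paper handles this by observing that given $\cS_\mu^\tensor$ the pair of arrays is distributed as two independent uniform permutations of the clause blocks of a fixed pattern, so Bolthausen's combinatorial central limit theorem applies (\Lem~\ref{Lemma_perm}), with $\Erw[A\mid\cS_\mu^\tensor]=dn/2+O(1)$ and $\Var(A\mid\cS_\mu^\tensor)\sim\nu^2m$, $\nu^2=\frac k{16}(k-4(k-1)q(1-q))$, computed in \Lem~\ref{Lemma_perm_1}; Bayes' rule then expresses $\pr[\cS_\mu^\tensor\mid\cA_\alpha\cap\cB^\tensor]$ through $\pr[\cA_\alpha\mid\cS_\mu^\tensor]$, $\pr[\cS_\mu^\tensor\mid\cB^\tensor]=4^{-n}\Erw[Z_\mu]^2$ and $\pr[\cA_\alpha\mid\cB^\tensor]$, and the Gaussian summation produces the stated constant.

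Your uniformity argument (``$\mu$ enters only through $\KL{\mu}{\bar\mu}=O(1/m)$'') is also too coarse. What is actually needed is that the centring of $A$ is $dn/2+O(1)$ uniformly on $\cM_\omega$: an error of order $\sqrt n$ in the mean would shift the Gaussian profile against the $\bink n\alpha$ factor and change the limiting constant. This $O(1)$ control is exactly where the balance constraint (\ref{eqMuSumsToZero}) and the defining condition $\|\mu-\bar\mu\|_2\leq\omega m^{-1/2}$ of $\cM_\omega$ enter (see the estimate $\Erw[A]-\frac{dn}2=m\sum_j(a_j-\frac12)^2=O(1)$ in \Lem~\ref{Lemma_perm_1}); nothing in your sketch supplies it. If you insist on the product-measure route instead, you would need a multivariate local limit theorem, jointly in the $2|\Sigma|$ clause-pattern counts and the agreement count and uniform in $\mu\in\cM_\omega$, with exact polynomial prefactors --- the ``tedious tracking of implicit parameters'' that the paper deliberately avoids --- and that computation, not the spectral heuristic about $M_{\pm1}$ and $(2d-1)(k-1)$, is what would legitimately identify the limit.
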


\noindent
The proof of \Lem~\ref{Lemma_centre} is based on the following {lemma}, which we derive from the central limit theorem
for random permutations~\cite{Erwin} in \Sec~\ref{Sec_permutations}. {The motivation for the definition of $\vec y^\tensor$ in this lemma will become clear very soon, in the proof of \Lem~\ref{Lemma_centre}.}

\begin{lemma}\label{Lemma_perm}
The following holds uniformly for all $\mu\in\cM_\omega$.
Let $$\vec y^\tensor=(y_{ij}^{(1)},y_{ij}^{(2)})_{(i,j)\in[m]\times[k]}$$ be chosen uniformly at random from the set of all {$m\times k$ 
$\{\pm1\}^2$-arrays}.
Let $\cS_\mu^\tensor$ be the event that
	\begin{align}\label{eqLemma_perm1}
	\sum_{i=1}^m\prod_{j=1}^k\vecone\{y_{ij}^{(1)}=\sigma_j\}=\sum_{i=1}^m\prod_{j=1}^k\vecone\{y_{ij}^{(2)}=\sigma_j\}=m\mu(\sigma)
		\qquad\mbox{for all $\sigma\in\Sigma$}.
	\end{align}
Further, let 
	\begin{align*}
	A&=\sum_{i=1}^m\sum_{j=1}^k\vecone\{y_{ij}^{(1)}=y_{ij}^{(2)}=1\},&\nu^2=\frac{k}{16}\bc{k-4(k-1)q(1-q)}.
	\end{align*}
Then uniformly for all reals $a<b$ we have
	\begin{align*}
	\pr[m^{-1/2}(A-dn/2)\in(a,b)|\cS_\mu^\tensor]&=\frac1{\sqrt{2\pi}\nu}\int_a^b\exp(-z^2/(2\nu^2))dz+o(1).
	\end{align*}
\end{lemma}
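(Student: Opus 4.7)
The plan is to recast $A$ as a linear statistic of a uniformly random permutation and invoke Hoeffding's combinatorial central limit theorem, in the form of the reference \cite{Erwin}. The first step is to observe that conditional on $\cS_\mu^\tensor$ the two layers $(y_{ij}^{(1)})_{i,j}$ and $(y_{ij}^{(2)})_{i,j}$ are independent, because $\cS_\mu^\tensor$ is a product event in the two layers and the unconditional law is a product too. Each layer, conditionally on $\cS_\mu^\tensor$, is a uniformly random ordering of the multiset of patterns containing $m\mu(\sigma)$ copies of each $\sigma\in\Sigma$. Fixing any enumeration $\tau_1,\ldots,\tau_m$ of this multiset, by exchangeability we may couple so that layer $1$ puts pattern $\tau_i$ in row $i$, while layer $2$ puts $\tau_{\pi(i)}$ in row $i$ for a uniformly random $\pi\in S_m$. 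Setting $a_{ij}=|\{l\in[k]:\tau_{i,l}=\tau_{j,l}=1\}|$ we obtain the clean representation $A=\sum_{i=1}^m a_{i,\pi(i)}$.

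Next I would compute the mean and variance. Writing $p_l=\sum_{\sigma\in\Sigma}\mu(\sigma)\mathbf 1\{\sigma_l=1\}$, one has $\Erw[A\mid\cS_\mu^\tensor]=\frac1m\sum_{i,j}a_{ij}=m\sum_l p_l^2$. The constraint (\ref{eqMuSumsToZero}) gives $\sum_l(2p_l-1)=0$, while $\mu\in\cM_\omega$ forces $p_l=\tfrac12+O(m^{-1/2})$, so $\sum_l p_l^2=\tfrac k4+O(k/m)$ and $\Erw[A\mid\cS_\mu^\tensor]=dn/2+O(1)$. For the variance, Hoeffding's identity gives
\[
\Var[A\mid\cS_\mu^\tensor]=\frac1{m-1}\sum_{i,j}(a_{ij}-\bar a_{i\cdot}-\bar a_{\cdot j}+\bar a)^2 .
\]
Expanding $a_{ij}=\tfrac14(k+\langle\tau_i,\vecone\rangle+\langle\tau_j,\vecone\rangle+\langle\tau_i,\tau_j\rangle)$ and using $p_l=\tfrac12+o(1)$, a short calculation shows that the centered entries equal $\tfrac14\langle\tau_i,\tau_j\rangle+o(1)$. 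Hence, with $T_{ll'}=\sum_i\tau_{i,l}\tau_{i,l'}$,
\[
\Var[A\mid\cS_\mu^\tensor]\sim\frac1{16(m-1)}\sum_{l,l'}T_{ll'}^{\,2} .
\]
Under $\bar\mu$ one checks $\Erw_{\bar\mu}[\sigma_l\sigma_{l'}]=2q-1$ for $l\ne l'$ (by conditioning the product measure $\Be(q)^{\otimes k}$ on not being all $-1$ and using $(1-q)^k=1-2q$), yielding $T_{ll}=m$ and $T_{ll'}=m(2q-1)$ for $l\ne l'$. Plugging in produces
\[
\Var[A\mid\cS_{\bar\mu}^\tensor]\sim\frac{m}{16}\bc{k+k(k-1)(2q-1)^2}=\frac{mk}{16}\bc{k-4(k-1)q(1-q)}=\nu^2 m ,
\]
and for $\mu\in\cM_\omega$ the same asymptotic holds, since $p_l$ and the second-moment functional are continuous in $\mu$ and $\mu=\bar\mu+O(m^{-1/2})$ entrywise on a set of size $O_\omega(1)$.

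Finally I would invoke Hoeffding's combinatorial CLT for $\sum_i a_{i,\pi(i)}$. The entries satisfy $|a_{ij}|\le k=O(1)$, so the centered values are $O(k)$, and $\Var[A\mid\cS_\mu^\tensor]=\Theta(m)$; hence the Lindeberg-type hypothesis of the theorem holds trivially. Therefore $(A-\Erw[A\mid\cS_\mu^\tensor])/\sqrt{\Var[A\mid\cS_\mu^\tensor]}$ converges in distribution to $\cN(0,1)$, and combining with the mean and variance estimates gives the claim that $m^{-1/2}(A-dn/2)\Rightarrow\cN(0,\nu^2)$.

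The main obstacle is ensuring uniformity in $\mu\in\cM_\omega$ of the mean shift (which must be $o(\sqrt m)$, not just $o(m)$) and of the variance approximation. The key observation that makes this go through is that, despite $\omega$ eventually being sent to infinity, for any fixed $\omega$ the deviation $\mu-\bar\mu$ is $O_\omega(m^{-1/2})$ in $\ell^2$; the quadratic nature of the mean and the bilinear nature of the variance then turn this into $O_\omega(1)$ and $o_\omega(m)$ errors respectively, with constants depending only on $k$ and $\omega$. The CLT itself is uniform in $\mu$ because Hoeffding's bound depends only on $\max|a_{ij}|$ and $\Var[A]$, both of which are controlled uniformly.
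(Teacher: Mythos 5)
Your proposal is correct and follows essentially the same route as the paper: the paper likewise computes $\Erw[A\mid\cS_\mu^\tensor]=dn/2+O(1)$ and $\Var[A\mid\cS_\mu^\tensor]\sim\nu^2m$ (its Lemma 5.4, via a direct expansion of $\Erw[A^2]$ over pattern pairs rather than Hoeffding's variance identity, but reducing to the same second moments of $\mu\approx\bar\mu$), and then represents $A$ as $\sum_i\vecone\{u_{i,j}=u_{\vec\pi^{(2)\,-1}\circ\vec\pi^{(1)}(i),j}=1\}$ for a uniform permutation and invokes Bolthausen's combinatorial CLT with its Berry--Esseen-type rate to get uniformity in $\mu$ and in $a<b$. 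Your coupling, mean/variance asymptotics (including the check $\Erw_{\bar\mu}[\sigma_l\sigma_{l'}]=2q-1$) and the uniformity argument all match the paper's proof in substance.
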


\begin{proof}[Proof of \Lem~\ref{Lemma_centre}.]
Fix $\omega>0$ and let $\mu\in\cM_\omega$.
There are $2^n\bink{n}{\alpha}$ pairs $(\tau_1,\tau_2)$ of truth assignments with overlap $\alpha$.
If we fix one such pair $(\tau_1,\tau_2)$, what is the probability that $\mu(\tau_1)=\mu(\tau_2)=\mu$?
To {determine this we need information} on the distribution of the string
	$$(\sign(i,j)\tau^{(1)}(\partial(i,j)),\sign(i,j)\tau^{(2)}(\partial(i,j)))_{i\in[m],j\in[k]}$$
of truth value combinations that emerges if we match the literals to the clauses randomly and plug in truth values according to $\tau^{(1)},\tau^{(2)}$.
Set
	$$\tilde y_{ij}^{(t)}=\sign(i,j)\tau^{(1)}(\partial\PHI[i,j])\qquad(i\in[m],j\in[k],t\in\{1,2\})$$
and $\tilde{\vec y}^\tensor=(\tilde y_{ij}^{(1)},\tilde y_{ij}^{(2)})_{i,j}$ for the sake of brevity.
Moreover, let $\vec y^\tensor$ be uniformly random as in \Lem~\ref{Lemma_perm}.
Further, with the notation of \Lem~\ref{Lemma_perm} {define the two events}
	\begin{align*}
	\cA_\alpha&=\{A=d\alpha\},&
		\cB^\tensor&=\cbc{
		\sum_{i=1}^m\sum_{j=1}^ky_{ij}^{(1)}=\sum_{i=1}^m\sum_{j=1}^ky_{ij}^{(2)}=0}.
	\end{align*}
Then the distribution of $\tilde{\vec y}^\tensor$ coincides with the distribution of $\vec y^\tensor$ given $\cA_\alpha\cap\cB^\tensor$.
Clearly, because $\vec y^\tensor$ is uniformly random, Stirling's formula yields 
	\begin{align*}
	\pr\brk{\vec y^\tensor\in\cA_\alpha|\cB^\tensor}&=
		\bink{2dn}{\alpha d,(n-\alpha)d,(n-\alpha)d,\alpha d}\bink{2dn}{dn}^{-2}=\bink{dn}{d\alpha}^{2}/\bink{2dn}{dn}
		\sim\frac{2}{\sqrt{\pi dn}}\exp\bc{-4dn\bc{\frac\alpha n-\frac12}^2}
	\end{align*}
uniformly for all $\alpha$ such that $|\alpha-n/2|\leq n^{0.6}$ and all $\mu\in\cM_\omega$.
Additionally, we can write
	\begin{align*}
	\pr\brk{\vec{ y}^\tensor\in\cS_\mu^\tensor|\cB^\tensor}&=4^{-n}\Erw[Z_\mu]^2.
	\end{align*}
Further, by Bayes' rule
	\begin{align*}
	\pr\brk{\mu(\tau_1)=\mu(\tau_2)=\mu}&=\pr\brk{\vec{\tilde y}^\tensor\in\cS_\mu^\tensor}
		=\pr\brk{\vec{ y}^\tensor\in\cS_\mu^\tensor|\vec{ y}^\tensor\in\cA_\alpha\cap\cB^\tensor}
		=\frac{\pr\brk{\vec{ y}^\tensor\in\cA_\alpha|\cS_\mu^\tensor}
			\pr\brk{\vec{ y}^\tensor\in\cS_\mu^\tensor|\cB^\tensor}}
			{\pr\brk{\vec{ y}^\tensor\in\cA_\alpha|\cB^\tensor}}\\
		&=\frac{\sqrt{\pi dn}\Erw[Z_\mu]^2}{2^{2n+1}}\exp\bc{4dn\bc{\frac\alpha n-\frac12}^2}
			\pr\brk{\vec{ y}^\tensor\in\cA_\alpha|\cS_\mu^\tensor}.
	\end{align*}
Moreover, uniformly for all $\alpha$ such that $|\alpha-n/2|\leq n^{0.6}$,
	\begin{align*}
	2^{-n}\bink{n}{\alpha n}&\sim\sqrt{\frac2{\pi n}}\exp\brk{-2n\bc{\frac\alpha n-\frac12}^2}.
	\end{align*}
Consequently, \Lem~\ref{Lemma_perm} gives
	\begin{align*}
	\sum_{\alpha:|\alpha-\frac n2|\leq a}\frac{\Erw[Z_{\mu,\alpha}^\tensor]}{\Erw[Z_\mu]^2}&=o(1)+
		\sqrt{\frac{d}{2}}\sum_{\alpha:|\alpha-\frac n2|\leq a\sqrt n}
		\exp\bc{(4d-2)\bc{\frac\alpha n-\frac12}^2n}\pr\brk{\vec{ y}^\tensor\in\cA_\alpha|\cS_\mu^\tensor}\\
		&=o(1)+\sqrt{\frac{d n}{4\pi\nu^2m}}\int_{-\infty}^\infty
			\exp\brk{\bc{4d-2-\frac{4dk}{\nu^2}}\bc{z-\frac12}^2}dz.
	\end{align*}
Taking $a\to\infty$, we thus obtain
	\begin{align*}
	\lim_{a\to\infty}\lim_{n\to\infty}\sum_{\alpha:|\alpha-\frac n2|\leq a}\frac{\Erw[Z_{\mu,\alpha}^\tensor]}{\Erw[Z_\mu]^2}&=
		\sqrt{\frac{k}{8\pi\nu^2}}\int_{-\infty}^\infty
			\exp\brk{\bc{4d-2-\frac{dk}{4\nu^2}}\bc{z-\frac12}^2}dz
		=\sqrt\frac k{2dk-16(2d-1)\nu^2}\enspace.
	\end{align*}
Plugging in the expression for $\nu^2$ and simplifying completes the proof.
\end{proof}

\noindent
Building upon ideas from~\cite{SAT},
in \Sec~\ref{Sec_boundary} we prove the following bound on the contribution of $\alpha$ far from $n/2$.

\begin{lemma}\label{Lemma_sm}
Uniformly for $\mu\in\cM_\omega$ we have
	\begin{align*}
	\lim_{a\to\infty}\lim_{n\to\infty}\sum_{\alpha:|\alpha-\frac n2|>a\sqrt n}\frac{\Erw[Z_{\mu,\alpha}^\tensor]}{\Erw[Z_\mu]^2}=0.
	\end{align*}
\end{lemma}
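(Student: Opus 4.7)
The plan is to split the sum at a threshold $\eps n$:
\begin{align*}
R_1 &= \{\alpha : a\sqrt n < |\alpha - n/2| \leq \eps n\}, & R_2 &= \{\alpha : |\alpha - n/2| > \eps n\},
\end{align*}
for some $\eps = \eps(k,d) > 0$ to be fixed later, and to bound the contributions of $R_1$ and $R_2$ separately. For $R_1$ I would use a refined Gaussian estimate extending the proof of \Lem~\ref{Lemma_centre}, while $R_2$ will require a genuine large-deviations bound in the spirit of the second moment analysis from~\cite{SAT}.

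For $R_1$, I would upgrade the central limit theorem of \Lem~\ref{Lemma_perm} to a local moderate-deviations estimate valid uniformly for $|\alpha - n/2| \leq \eps n$, yielding
$\pr[\vec y^\tensor \in \cA_\alpha \mid \cS_\mu^\tensor] \leq C m^{-1/2}\exp\bc{-\tfrac{d^2(\alpha - n/2)^2}{2m\nu^2}(1+o(1))}$.
Combining this with the Stirling bound $\bink{n}{\alpha}/2^n \leq C n^{-1/2} \exp\bc{-2n(\alpha/n - 1/2)^2(1+o(1))}$ and the factor $\exp\bc{4dn(\alpha/n-1/2)^2}$ already appearing in the proof of \Lem~\ref{Lemma_centre}, the ratio $\Erw[Z_{\mu,\alpha}^\tensor]/\Erw[Z_\mu]^2$ is Gaussian in $\alpha - n/2$ with variance $\Theta(n)$ (the quadratic coefficient being strictly negative by the same positivity $2dk - 16(2d-1)\nu^2 > 0$ exploited at the end of the proof of \Lem~\ref{Lemma_centre}). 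Summing over $R_1$ then yields $\int_a^\infty e^{-cz^2}\,dz$, which vanishes as $a \to \infty$ uniformly in $\mu \in \cM_\omega$.

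For $R_2$, I would compute $\Erw[Z_{\mu,\alpha}^\tensor]$ directly from the bijection model (\ref{eqConfModel}) along the lines of \Sec~\ref{Sec_fm}. Fixing a pair $(\tau_1,\tau_2)$ with overlap $\alpha$, the probability that $\PHI$ is simultaneously satisfied by $\tau_1$ and $\tau_2$ with clause-pattern distribution $\mu$ factorises through a multinomial enumeration of the joint types $(\sign,\tau_1,\tau_2) \in \{\pm 1\}^3$ across the $2d$ slots of each variable, in parallel with \Lem~\ref{Lemma_SB} and Claim~\ref{Lemma_Smu}. Stirling's formula then yields, writing $\rho=\alpha/n$,
\begin{align*}
\frac1n \ln \frac{\Erw[Z_{\mu,\alpha}^\tensor]}{\Erw[Z_\mu]^2} \leq \phi_\mu(\rho) + o(1)
\end{align*}
for an explicit $\mu$-dependent entropy-plus-energy functional $\phi_\mu$ on $(0,1)$.

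The principal obstacle is to verify the strict negativity $\sup_{\mu \in \cM_\omega}\sup_{|\rho - 1/2|>\eps}\phi_\mu(\rho) < 0$. Because $\|\mu - \bar\mu\|_2 = O(m^{-1/2})$ on $\cM_\omega$ and $\phi_\mu$ is continuous in $\mu$ in the relevant topology, this reduces to showing $\phi_{\bar\mu}(\rho) < 0$ for $\rho \neq 1/2$, which is essentially a rerun of the second moment calculation of~\cite{SAT} adapted to the regular setting: one verifies by direct differentiation that $\phi_{\bar\mu}$ attains its unique maximum $0$ at $\rho = 1/2$ with strictly negative second derivative there, and the additive slack $-4$ in (\ref{eqAssumptionOnd}) provides the negativity margin needed to cover the boundary regime where $\rho$ is close to $0$ or $1$ and the binomial-entropy contribution is smallest. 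Summing the resulting $\exp(n\phi_\mu(\rho) + O(\log n))$ estimates over $R_2$ (at most $n$ terms, each exponentially small in $n$) concludes the proof.
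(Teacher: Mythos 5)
Your two-region architecture is reasonable in outline, but both of its pillars are asserted at exactly the points where the real work lies, so as it stands the proposal has genuine gaps. For $R_1$, the claimed "upgrade" of \Lem~\ref{Lemma_perm} to a local estimate valid uniformly for $|\alpha-n/2|\le\eps n$ does not follow from Bolthausen's theorem: that result is a Berry--Esseen-type bound with an \emph{additive} error $O(m^{-1/2})$, so it yields no information about $\pr[\vec y^\tensor\in\cA_\alpha\mid\cS_\mu^\tensor]$ once $|\alpha-n/2|$ exceeds roughly $\sqrt{n\log n}$. Deviations of order $\eps n$ are at the large-deviations scale for the two-permutation statistic $A$, and a local estimate with Gaussian rate $(1+o(1))\,d^2(\alpha-n/2)^2/(2m\nu^2)$, uniform in $\mu\in\cM_\omega$ over that whole range, is itself a substantial statement --- essentially equivalent to the Laplace-method computation the paper carries out in the auxiliary product space (Fact~\ref{Fact_sm}, Claims~\ref{Claim_Brho}--\ref{Claim_BsecondMoment}, \Lem~\ref{Lemma_parameterShift}). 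The cancellation you invoke is also delicate: the competing quadratic coefficients ($4d-2$ from the overlap entropy and Bayes factor, versus $dk/(4\nu^2)$ from the Gaussian) are each $\Theta(k2^k)$ and cancel down to a margin of order $1$, so the error in your exponent has to be controlled relative to $d$, which your sketch does not address.

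For $R_2$, you defer precisely the crux. Establishing $\sup_{|\rho-1/2|>\eps}\phi_{\bar\mu}(\rho)<0$ is the content of \Lem~\ref{Lemma_Kosta}, which the paper proves in Appendix~\ref{Sec_Kosta}; it is not a matter of "direct differentiation" but requires the implicit reparametrisation of \Lem~\ref{Lemma_parameterShift}, second-derivative bounds in the window $|\rho_{11}-\frac14|\le2^{-0.49k}$, and a separate analysis of the stationary points of the boundary function $g$, where the slack in (\ref{eqAssumptionOnd}) and the regime of overlap close to $1$ are what make the inequality hold; citing~\cite{SAT} does not settle it, since there the plain count is replaced by a weighted one precisely because the analogous global maximality fails for the uniform model. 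Note also that the $\mu$-continuity reduction you propose is unnecessary at exponential scale: the paper simply bounds $Z^\tensor_{\mu,\alpha}\le Z^\tensor_\alpha$ beyond the scale $\sqrt n\ln n$, where the superpolynomial bound of \Lem~\ref{Lemma_Kosta} absorbs the polynomial factor $\Erw[Z]^2/\Erw[Z_\mu]^2=\Theta(m^{|\Sigma|-2})$, and it keeps the $\mu$-constraint only in the window $a\sqrt n<|\alpha-n/2|\le\sqrt n\ln n$, where the conditional type probabilities $p(\vec\nu)$ are handled by the Laplace method. Without a proof of your local large-deviation estimate in $R_1$ and of the global negativity of $\phi_{\bar\mu}$ in $R_2$, the argument is incomplete.
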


\noindent
Finally, \Prop~\ref{Prop_sm} is immediate from \Lem s~\ref{Lemma_sm} and~\ref{Lemma_centre}.

\subsection{Proof of \Lem~\ref{Lemma_perm}}\label{Sec_permutations}
We begin by calculating the expectation and the variance of $A$ given $\cS_\mu^\tensor$ {as defined in~(\ref{eqLemma_perm1})}. {This is the number of $1$s in common between the arrays ${\bf y}^{(1)}$ and ${\bf y}^{(2)}$, conditional on both arrays having row frequencies specified by $\mu$.}
To simplify the notation we denote by $\hat{\vec y}=(\hat y_{ij}^{(1)},\hat y_{ij}^{(2)})_{i,j}$ the random vector $\vec y^\tensor$ given that $\cS_\mu^\tensor$ occurs.

\begin{lemma}\label{Lemma_perm_1}
We have $\Erw[A(\hat{\vec y})]=dn/2 +O(1)$ and $\Var(A(\hat{\vec y}))\sim\nu^2m$, where
	\begin{align*}
	\nu^2&=\frac{k}{16}\bc{k-4(k-1)q(1-q)}.
	\end{align*}
\end{lemma}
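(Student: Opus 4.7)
The plan is to exploit the fact that conditioning on $\cS_\mu^\tensor$ renders $\vec y^{(1)}$ and $\vec y^{(2)}$ independent, each uniform over $\pm1$-arrays whose row patterns form a multiset with $m\mu(\sigma)$ copies of each $\sigma \in \Sigma$. Thus every expectation involving both arrays factorises, and the whole computation reduces to one- and two-point marginals within a single random permutation of row patterns. Write $\mu_j=\sum_\sigma \mu(\sigma)\vecone\{\sigma_j=1\}$ and $\mu_{jj'}=\sum_\sigma \mu(\sigma)\vecone\{\sigma_j=\sigma_{j'}=1\}$. Direct substitution into (\ref{eqbarmu}) together with the identity $1-(1-q)^k=2q$ yields $\bar\mu_j=1/2$ and, for $j\neq j'$, $\bar\mu_{jj'}=q/2$; for any $\mu\in\cM_\omega$ the distance $\|\mu-\bar\mu\|_2=O(m^{-1/2})$ gives $\mu_j-\bar\mu_j, \mu_{jj'}-\bar\mu_{jj'}=O(m^{-1/2})$, and (\ref{eqMuSumsToZero}) rewrites as $\sum_j(\mu_j-1/2)=0$.

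The mean is then immediate: by linearity and cross-array independence,
\[
\Erw[A(\hat{\vec y})]=m\sum_j\mu_j^2=\frac{mk}{4}+m\sum_j(\mu_j-1/2)^2=\frac{dn}{2}+O(1),
\]
since the constraint kills the linear term and the quadratic sum is $O(k/m)=O(1)$.

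For the variance, independence of the two arrays gives
\[
\Var(A\mid\cS_\mu^\tensor)=\sum_{(i,j),(i',j')}\bigl(r^2-\mu_j^2\mu_{j'}^2\bigr),\qquad r=r_{(i,j),(i',j')}:=\pr[y^{(1)}_{ij}=y^{(1)}_{i'j'}=1\mid\cS_\mu^\tensor].
\]
I split the sum into three cases. The case $(i,j)=(i',j')$ contributes $\mu_j^2-\mu_j^4$ with multiplicity $mk$; the case $i=i'$ with $j\neq j'$ contributes $\mu_{jj'}^2-\mu_j^2\mu_{j'}^2$ with multiplicity $mk(k-1)$, since the $j,j'$ entries in row $i$ are fixed coordinates of a single random row pattern; finally, when $i\neq i'$ (multiplicity $m(m-1)k^2$) the standard sampling-without-replacement identity for random permutations gives $r=(m\mu_j\mu_{j'}-\mu_{jj'})/(m-1)$, so that $r-\mu_j\mu_{j'}=O(1/m)$. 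Writing $r^2-\mu_j^2\mu_{j'}^2=(r-\mu_j\mu_{j'})(r+\mu_j\mu_{j'})$ in the last case, the dominant contribution is $\sim 2m\sum_{j,j'}\mu_j\mu_{j'}(\mu_j\mu_{j'}-\mu_{jj'})$, the quadratic remainder being $O(1)$.

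Substituting $\mu=\bar\mu$ and collecting, the three contributions evaluate to $3mk/16$, $mk(k-1)(4q^2-1)/16$, and $2m[k^2/16-k/8-k(k-1)q/8]$; adding and simplifying via $(k-1)(4q^2-4q-1)=-4(k-1)q(1-q)-(k-1)$ yields
\[
\Var(A\mid\cS_{\bar\mu}^\tensor)\sim\frac{mk}{16}\bigl[k-4(k-1)q(1-q)\bigr]=\nu^2 m.
\]
Uniformity over $\mu\in\cM_\omega$ follows because the $O(m^{-1/2})$ perturbations of $\mu_j$ and $\mu_{jj'}$ shift each of the three case-contributions by at most $O(\sqrt m)=o(m)$. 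The only delicate point is bookkeeping the $i\neq i'$ case: the $O(1/m)$ random-permutation correction to $r$, multiplied by the $m(m-1)$ pair count, is precisely what produces the characteristic $(2q-1)^2$ term comprising the nontrivial part of $\nu^2$.
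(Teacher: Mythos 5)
Your proposal is correct and follows essentially the same route as the paper: the same decomposition of the pair sum into the three cases (same cell, same clause with distinct positions, distinct clauses), the same sampling-without-replacement correction for distinct clauses that produces the nontrivial part of $\nu^2$, and the same evaluation at $\bar\mu$ via $1-(1-q)^k=2q$, with the constraint (\ref{eqMuSumsToZero}) killing the linear term in the mean. The only (cosmetic) difference is that you book-keep via the column marginals $\mu_j,\mu_{jj'}$ rather than the pattern-level sums $\sum_{\sigma,\tau}\mu(\sigma)\mu(\tau)\zeta(\sigma,\tau)^p$ and binomial moments used in the paper; both yield $\nu^2 m$ uniformly on $\cM_\omega$.
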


The proof will show that $\nu^2$ is $\Var(A(\hat{\vec y}))$ in the case that $\mu=\bar\mu$.
\begin{proof}
 Let
	$A_{ij}=\vecone\{\hat y^{(1)}_{ij}=\hat y^{(2)}_{ij}=1\}$ so that
 $A=\sum_{i,j}A_{ij}$.
To calculate the expectation, set 
	$$A_j=\sum_{i\in[m]}\vecone\{\hat y^{(1)}_{ij}=\hat y^{(2)}_{ij}=1\},\quad a_j=\sum_{\sigma\in\Sigma}\vecone\{\sigma_j=1\}\mu(\sigma)
		=\frac1m\sum_{i\in[m]}\vecone\{\hat y^{(1)}_{ij}=1\}=
		\frac1m\sum_{i\in[m]}\vecone\{\hat y^{(2)}_{ij}=1\}.$$
{Thus, $a_j$ is the fraction of clauses whose $j$th literal is `true' in a truth assignment that contributes to $Z_\mu$.}
Then it is clear that $\Erw[A]=\sum_{j=1}^k\Erw[A_j]$ and $\Erw[A_j]=m a_j^2$.
Furthermore, because $\mu\in\cM_\omega$ we have
	\begin{align*}
	a_j-\frac12&=a_j-\sum_{\sigma\in\Sigma}\vecone\{\sigma_j=1\}\bar\mu(\sigma)&[\mbox{by  (\ref{eqq}) and (\ref{eqbarmu})}]\\
		&=\sum_{\sigma\in\Sigma}\vecone\{\sigma_j=1\}(\mu(\sigma)-\bar\mu(\sigma))\leq2^k\|\mu-\bar\mu\|_2\leq2^k\omega m^{-1/2}.
	\end{align*}
Finally, since $\sum_ja_j=1/2$ by (\ref{eqMuSumsToZero}), for any fixed $\omega>0$ we have
	\begin{align*}
	\Erw[A]-\frac{dn}2&=m\sum_{j=1}^k\bc{a_j^2-\frac14}=m\sum_{j=1}^k\bc{a_j-\frac12}^2\leq4^k\omega^2=O(1).
	\end{align*} 

Moving on to the variance, we expand $\Erw[A^2]$ to obtain
	\begin{align*}
	\Erw[A^2]&=\sum_{i,s=1}^m\sum_{j,t=1}^k\Erw[A_{ij}A_{st}]
		=\Erw[A]+\sum_{i,j,s,t:i=s,t\neq j}\Erw[A_{ij}A_{st}]+\sum_{i,j,s,t:i\neq s,j=t}\Erw[A_{ij}A_{st}]+\sum_{i,j,s,t:i\neq s,j\neq t}\Erw[A_{ij}A_{st}]\\
		&=\Erw[A]+\sum_{i,j,t:t\neq j}\Erw[A_{ij}A_{it}]+\sum_{i,j,s,t:i\neq s}\Erw[A_{ij}A_{st}]. 
	\end{align*}
Further, for $\sigma,\tau\in\Sigma$ and $j\in[k]$ let $\zeta_j(\sigma,\tau)=\vecone\{\sigma_j=\tau_j=1\}$ and 
	$\zeta(\sigma,\tau)=\sum_{j\in[k]}\zeta_j(\sigma,\tau)=(k+\scal{\sigma}\tau)/2$.
Then
	\begin{align*}
	\sum_{i,j,t:t\neq j}\Erw[A_{ij}A_{it}]&=
		m^{-1}\sum_{\sigma,\tau}\mu(\sigma)\mu(\tau)\sum_{j\neq t}\zeta_j(\sigma,\tau)\zeta_t(\sigma,\tau)
		=m^{-1}\sum_{\sigma,\tau}\mu(\sigma)\mu(\tau)(\zeta(\sigma,\tau)^2-\zeta(\sigma,\tau))\\
		&=-\Erw[A]+m^{-1}\sum_{\sigma,\tau}\mu(\sigma)\mu(\tau)\zeta(\sigma,\tau)^2.
	\end{align*}
Additionally,
	\begin{align*}
	\sum_{i,j,s,t:i\neq s}\Erw[A_{ij}A_{st}]&=\sum_{\sigma,\tau}\sum_{\sigma',\tau'}
		\frac{\mu(\sigma)\mu(\tau) (\mu(\sigma')-\vecone\{\sigma=\sigma'\}) (\mu(\tau')-\vecone\{\tau=\tau'\})}{m(m-1)}\zeta(\sigma,\tau)\zeta(\sigma',\tau')\\
		&=\frac{\brk{\sum_{\sigma,\tau}\mu(\sigma)\mu(\tau)\zeta(\sigma,\tau)}^2-
			2\sum_{\sigma,\tau,\sigma'}\mu(\sigma)\mu(\tau)\mu(\sigma')\zeta(\sigma,\tau)\zeta(\sigma,\sigma')
				+\sum_{\sigma,\tau}\mu(\sigma)\mu(\tau)\zeta(\sigma,\tau)^2}{m(m-1)}\\
		&=\frac{m\Erw[A]^2}{m-1}-\frac2{m(m-1)}\sum_{\sigma,\tau,\sigma'}\mu(\sigma)\mu(\tau)\mu(\sigma')\zeta(\sigma,\tau)\zeta(\sigma,\sigma')
			+\frac1{m(m-1)}\sum_{\sigma,\tau}\mu(\sigma)\mu(\tau)\zeta(\sigma,\tau)^2.
	\end{align*}
Combining the above, we see that uniformly for $\mu\in\cM_\omega$
	\begin{align*}
	\Var(A)&\sim\frac1m\sum_{\sigma,\tau}\bar\mu(\sigma)\bar\mu(\tau)\zeta(\sigma,\tau)^2
		-\frac2{m^2}\sum_{\sigma,\tau,\sigma'}\bar\mu(\sigma)\bar\mu(\tau)\bar\mu(\sigma')\zeta(\sigma,\tau)\zeta(\sigma,\sigma')
			+\frac1m\brk{\sum_{\sigma,\tau}\bar\mu(\sigma)\bar\mu(\tau)\zeta(\sigma,\tau)}^2.
	\end{align*}
Plugging in the definition of $\bar\mu$ and using (\ref{eqq}), we obtain
	\begin{align*}
	m^{-2}\sum_{\sigma,\tau}\bar\mu(\sigma)\bar\mu(\tau)\zeta(\sigma,\tau)&=
		(1-(1-q)^k)^{-2}\Erw[\Bin(k,q^2)]=k/4,\\
	m^{-2}\sum_{\sigma,\tau}\bar\mu(\sigma)\bar\mu(\tau)\zeta(\sigma,\tau)^2&=
		(1-(1-q)^k)^{-2}\Erw[\Bin(k,q^2)^2]
			=\frac{kq^2((k-1)q^2+1)}{(1-(1-q)^k)^{2}}=\frac k4((k-1)q^2+1),\\
	m^{-3}\sum_{\sigma,\tau,\tau'}\mu(\sigma)\mu(\tau)\mu(\tau')\zeta(\sigma,\tau)\zeta(\sigma,\tau')
		&=(1-(1-q)^k)^{-3}\sum_{j=1}^k\bink kjq^j(1-q)^j\bc{\sum_{l=1}^jlq^l(1-q)^{j-l}}^2\\
		&=\frac{kq^3((k-1)q+1)}{(1-(1-q)^k)^3}=\frac k8((k-1)q+1).
	\end{align*}
Hence,
	\begin{align*}
	m^{-1}\Var(A)&\sim\frac{k}{16}\brk{k-4((k-1)q+1)+4((k-1)q^2+1)}=\frac{k}{16}\bc{k-4(k-1)q(1-q)},
	\end{align*}
as claimed.
\end{proof}

\noindent
Finally, \Lem~\ref{Lemma_perm} follows from \Lem~\ref{Lemma_perm_1} and {Bolthausen's} central limit theorem for random permutations
from~\cite{Erwin}; this result can be viewed as an extension of Berry-Esseen inequality to certain dependent random variables,
	and as such provides a uniform estimate for our purposes.  
To be precise, due to our conditioning on the event $\cS_\mu^\tensor$ the distribution of the random vector
$\hat{\vec y}=(\hat y_{ij}^{(1)},\hat y_{ij}^{(2)})_{i,j}$ can be described as follows.
Fix any vector $\tilde{ u}=(\tilde u_{ij})_{i,j}\in\{\pm1\}^{km}$ such that
$\sum_{i=1}^m\vecone\{(u_{i1},\ldots,u_{ik})=\sigma\}=m\mu(\sigma)$ for every $\sigma\in\Sigma$.
Moreover, let $\vec\pi^{(1)},\vec\pi^{(2)}:[m]\to[m]$ be two independent uniformly random permutations and let
$\tilde{\vec u}^{(t)}=(u_{\vec\pi^{(t)}(i),j})_{i,j}$ for $t=1,2$.
In words, $\tilde{\vec u}^{(t)}$ is obtained from $\tilde u$ by permuting the $m$ blocks of length $k$ that represent the individual clauses randomly.
Then $\hat{\vec y}$ has the same distribution as $(u_{\vec\pi^{(1)}(i),j},u_{\vec\pi^{(2)}(i),j})_{i,j}$.
Hence, $A$ is distributed as
	\begin{align*}
	\sum_{i=1}^m\sum_{j=1}^k\vecone\{u_{\vec\pi^{(1)}(i),j}=u_{\vec\pi^{(2)}(i),j}=1\}
		&=\sum_{i=1}^m\sum_{j=1}^k\vecone\{u_{i,j}=u_{\vec\pi^{(2)\,-1}\circ\vec\pi^{(1)}(i),j}=1\},
	\end{align*}
which is precisely the type of random sum for which~\cite{Erwin} establishes convergence to the normal distribution.

\subsection{Proof of \Lem~\ref{Lemma_sm}}\label{Sec_boundary}
We build upon the following result on the {\em total} number of satisfying assignments, which is implicit in prior work~\cite{SAT};
for the sake of completeness we give a self-contained proof in Appendix~\ref{Sec_Kosta}.
Let $Z_\alpha^\tensor$ be the number of pairs $(\sigma,\tau)$ of satisfying assignments of $\PHI$
such that $\sum_{i=1}^n\vecone\{\tau_1(x_i)=\tau_2(x_i)\}=\alpha${.}

\begin{lemma}\label{Lemma_Kosta}
There exists a number $t_0=t_0(k)$ such that for every $t>t_0$ we have
	$$\limsup_{n\to\infty}\sum_{\alpha:|\alpha-n/2|>t n^{1/2}}\Erw[Z_\alpha^\tensor]/\Erw[Z]^2\leq \exp(-t^2/17).$$
Moreover,
	$$\sum_{\rho:|\alpha-n/2|>n^{1/2}\ln n}\Erw[Z_\alpha^\tensor]\leq O(n^{-\ln\ln n})\Erw[Z]^2.$$
\end{lemma}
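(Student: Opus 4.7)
The plan is to compute $\Erw[Z_\alpha^\tensor]$ by a pair-version of the first-moment analysis of \Sec~\ref{Sec_fm} and then to study the resulting exponential rate function as a function of the overlap density $\rho=\alpha/n$. By symmetry,
\begin{align*}
\Erw[Z_\alpha^\tensor]=2^n\bink{n}{\alpha}\pr[\tau_1\text{ and }\tau_2\text{ both satisfy }\PHI]
\end{align*}
for any fixed pair $\tau_1,\tau_2$ with overlap $\alpha$. Extending Fact~\ref{Fact_fm} to pairs, I would introduce an i.i.d.\ $\{\pm1\}^2$-valued array $\vec\chi^\tensor=(\chi^{(1)}_{ij},\chi^{(2)}_{ij})_{i,j}$ with a four-point marginal $p=(p_{++},p_{+-},p_{-+},p_{--})$ and condition on the joint balance event (both coordinate sums zero) together with the count event that exactly $d\alpha$ slots carry each of the types $(++)$ and $(--)$ while $d(n-\alpha)$ carry each of $(+-)$ and $(-+)$; this type distribution is forced by the regularity of $\PHI$ and the overlap structure. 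Claims~\ref{Claim_S}--\ref{Claim_BS} generalise by inclusion-exclusion (the per-clause double satisfaction probability is $1-(p_{-+}+p_{--})^k-(p_{+-}+p_{--})^k+p_{--}^k$) and a four-dimensional local limit theorem for the balance and count event.

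After the five linear constraints (balance and marginal counts) only a single degree of freedom remains, and a saddle-point evaluation over that parameter yields
\begin{align*}
\Erw[Z_\alpha^\tensor]/\Erw[Z]^2\sim C(k,d)\,n^{-1/2}\exp\bigl(n\,\Psi(\rho)\bigr)
\end{align*}
for an explicit analytic $\Psi$ on $(0,1)$. At $\rho=1/2$ the optimal distribution is $p_{\pm\pm}=1/4$, the per-clause two-sat probability factorises as $1-2^{1-k}+4^{-k}\sim(2q)^2$ via~(\ref{eqq}), and pairing against~(\ref{eqFirstMomentFormula1}) gives $\Psi(1/2)=0$; symmetry forces $\Psi'(1/2)=0$. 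A direct Taylor expansion using $q=\tfrac12-2^{-k-1}+O(k4^{-k})$ from~(\ref{eqFirstMomentFormula2}) together with the slack in~(\ref{eqAssumptionOnd}) then gives $-\Psi''(1/2)\ge 2/17$ for all $k\ge k_0$, which yields Gaussian tails of the form $\exp(-t^2/17)$ in the range where the quadratic bound is valid. The second displayed bound of the lemma follows by specialising to $t=\ln n$, since $\exp(-(\ln n)^2/17)\le n^{-\ln\ln n}$ for large $n$.

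For $\rho$ bounded away from $1/2$ the quadratic bound breaks down and one needs the global estimate $\Psi(\rho)<-c$. I would import this from the second-moment machinery of~\cite{SAT}: the entropy-versus-satisfaction trade-off that underlies the bound is essentially unchanged in the regular signed model, since the bijection conditioning only contributes factors polynomial in $n$; under~(\ref{eqAssumptionOnd}) (whose slack of $4$ is consumed here) one inherits a strictly negative global maximum at any $\rho\ne 1/2$. The main obstacle is precisely this transferred global bound: the local Taylor analysis at $\rho=1/2$ is routine once $\Psi$ has been identified, but controlling the optimised product measure $p^\star(\rho)$ and the resulting inclusion-exclusion expression uniformly over all overlaps $\rho$ separated from $1/2$ is where most of the technical work is concentrated.
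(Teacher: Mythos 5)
Your overall architecture is the same as the paper's (a pair version of the auxiliary product measure with a four-point marginal, a local limit/Laplace analysis near the symmetric overlap, and a separate treatment of overlaps bounded away from $\tfrac12$; this is exactly what Appendix~\ref{Sec_Kosta} does via Fact~\ref{Fact_sm}, \Lem~\ref{Lemma_parameterShift} and Claims~\ref{Claim_Brho}--\ref{Claim_BsecondMoment}). But there is a genuine gap at the decisive step: you dispose of the first derivative with ``symmetry forces $\Psi'(1/2)=0$''. There is no such symmetry for SAT. Flipping one assignment of the pair does not preserve satisfiability, and indeed $\Erw[Z_n^\tensor]=\Erw[Z]$ while $\Erw[Z_0^\tensor]$ counts assignments whose inverse is also satisfying (NAE-type solutions), so $\Erw[Z_\alpha^\tensor]\neq\Erw[Z_{n-\alpha}^\tensor]$; the paper's own upper-bound function satisfies $g(\rho_{1,1})>g(\tfrac12-\rho_{1,1})$ for $\rho_{1,1}>\tfrac14$. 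Worse, in the closely related uniform model the analogous derivative at the symmetric overlap is strictly \emph{positive} --- this is the classical reason the vanilla second moment fails there --- so the vanishing of the first derivative cannot be ``routine once $\Psi$ has been identified''. It is a specific cancellation, at the value $q$ of (\ref{eqq}), between the per-clause double-satisfaction term and the Kullback--Leibler term $k\KL{\rho}{q(\rho)}$ that the regularity/balance conditioning inserts into the rate function; the paper establishes it by the explicit computation (\ref{eqsmmFirstDerivative1})--(\ref{eqsmmFirstDerivative6}), using the implicit parametrisation and derivative bounds of \Lem~\ref{Lemma_parameterShift}. Without proving $Df(\bar\rho)=0$ (and the uniform Hessian bound $D^2f\preceq\frac n{km}\id$ on the window $|\rho_{1,1}-\tfrac14|\le 2^{-0.49k}$), the Gaussian tail $\exp(-t^2/17)$ is unsupported.

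Two related points. Your justification for the far regime --- ``the bijection conditioning only contributes factors polynomial in $n$'' --- is misleading: the conditioning changes the \emph{exponential} rate through the KL term, and that change is precisely what restores stationarity at the centre; what is true (and is how the paper's second claim in the appendix proceeds) is that after choosing the auxiliary marginal equal to $\rho$ one gets the clean upper bound $\Erw[Z_\rho^\tensor]\leq\exp\brk{n\bc{H(\rho)+\frac{2d}k\ln(1-2^{1-k}+\rho_{1,1}^k)}+o(n)}$, i.e.\ the uniform-model function, and one must still carry out the one-dimensional calculus showing that the competing critical point near overlap $1-2^{-k}$ (i.e.\ $\rho_{1,1}\approx\tfrac12-2^{-k-1}$) is dominated under (\ref{eqAssumptionOnd}); merely citing \cite{SAT} leaves exactly this check, which is where the slack in $d$ is used, unverified in the regular setting. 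Finally, deducing the second display by ``specialising to $t=\ln n$'' is not legitimate from the first display as stated (a limsup for fixed $t$); you need the underlying Gaussian/Laplace estimate to hold uniformly up to $|\rho_{1,1}-\tfrac14|\le 2^{-0.49k}$, which the paper's claim provides but your write-up should make explicit.
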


In the following it will be convenient to replace the parameter $\alpha$ by another overlap parameter to represent the four possible truth value combinations. Define $\rho=(\rho_{s,t})_{s,t=\pm1}$ such that
	\begin{equation}\label{eqOverlapParameter}
	\rho_{1,1}+\rho_{1,-1}=\rho_{1,1}+\rho_{-1,1}=\frac12,\quad\rho_{1,1}+\rho_{1,-1}+\rho_{-1,1}+\rho_{-1,-1}=1.
	\end{equation}
In particular, $\rho$ is a probability distribution on $\{\pm1\}^2$ such that $\rho_{1,1}=\rho_{-1,-1}$ and $\rho_{1,-1}=\rho_{-1,1}$.
Hence,  (\ref{eqOverlapParameter}) {demonstrates} that we can view $\rho_{1,-1},\rho_{-1,1},\rho_{-1,-1}$ as affine functions of $\rho_{1,1}$.
The relationship between $\rho$ and $\alpha$ is going to be
	$2d\alpha=km(\rho_{1,1}+\rho_{-1,-1})=2km\rho_{1,1}$.
Indeed, let us introduce the symbols
	\begin{align}\label{eqrhovsalpha}
	Z_{\rho}^\tensor&=Z_{km\rho_{11}/d}^\tensor,&Z_{\mu,\rho}^\tensor&=Z_{\mu,km\rho_{11}/d}^\tensor.
	\end{align}
 We need to obtain a result similar to Lemma~\ref{Lemma_Kosta} for $Z_{\mu,\rho}^\tensor$ rather than $Z_{\alpha}^\tensor$.  Slightly extending the argument from~\cite{SAT},
we tackle the second moment computation by way of
 an auxiliary probability space as in \Sec~\ref{Sec_fm}.
 To unclutter the notation we write $f(k)=\tilde O(g(k))$ if there exists $c>0$ such that $|f(k)|\leq k^c g(k)$ for all $k>c$. 

\begin{lemma}\label{Lemma_parameterShift}
For any $\rho$ there exists a unique probability distribution $(q_{z_1,z_2})_{z_1,z_2\in\{\pm1\}}$ on $\{\pm1\}^2$ such that
	\begin{align}\label{eqLemma_parameterShifteqn}
	\frac{q_{1,1}}{1-2(q_{-1,-1}+q_{-1,1})^k+q_{-1,-1}^k}&=\rho_{1,1},&
			\frac{q_{1,-1}(1-(q_{-1,-1}+q_{1,-1})^{k-1})}{1-2(q_{-1,-1}+q_{-1,1})^k+q_{-1,-1}^k}&=\rho_{1,-1},&
			q_{-1,1}&=q_{1,-1}.
	\end{align}
The derivatives satisfy
	\begin{align}\label{eqLemma_parameterShift}
	\frac{\partial q_{1,1}}{\partial\rho_{1,1}}=1+\tilde O(2^{-k}),\quad\frac{\partial q_{1,-1}}{\partial\rho_{1,1}}=-1+\tilde O(2^{-k}),\quad
	\frac{\partial^2 q_{1,1}}{\partial\rho_{1,1}^2}=\tilde O(2^{-k}),\quad\frac{\partial^2 q_{1,-1}}{\partial\rho_{1,1}^2}=\tilde O(2^{-k}).
	\end{align}
\end{lemma}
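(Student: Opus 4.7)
The plan is to view (\ref{eqLemma_parameterShifteqn}) as defining a smooth map $\Phi\colon (q_{1,1},q_{1,-1})\mapsto(\rho_{1,1},\rho_{1,-1})$ after eliminating $q_{-1,1}=q_{1,-1}$ and $q_{-1,-1}=1-q_{1,1}-2q_{1,-1}$, and to show that $\Phi$ is a small $C^2$-perturbation of the identity, hence uniquely invertible with easily controlled derivatives.

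First I would observe that on the relevant region of the probability simplex the bases $q_{-1,-1}+q_{-1,1}$, $q_{-1,-1}$ and $q_{-1,-1}+q_{1,-1}$ all lie below a constant strictly less than $1$, so that
\[
R_1 \;=\; 2(q_{-1,-1}+q_{-1,1})^k - q_{-1,-1}^k
\qquad\text{and}\qquad
R_2 \;=\; (q_{-1,-1}+q_{1,-1})^{k-1}
\]
are $\tilde O(2^{-k})$ uniformly, and the same bound persists under any fixed number of differentiations because each differentiation of a $k$th power only spawns polynomial-in-$k$ factors that are absorbed by the $\tilde O$. Rewriting the system as $\rho_{1,1}=q_{1,1}/(1-R_1)$ and $\rho_{1,-1}=q_{1,-1}(1-R_2)/(1-R_1)$, we obtain $\Phi=\mathrm{id}+E$ with $E$ and its first and second partial derivatives all $\tilde O(2^{-k})$.

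Second, existence and uniqueness of the preimage $(q_{1,1},q_{1,-1})$ for a given $\rho$ follow from the inverse function theorem applied to $\Phi$: its Jacobian is $I+\tilde O(2^{-k})$, hence uniformly invertible for $k$ large enough, making $\Phi$ a $C^2$-diffeomorphism onto its image. Third, for the derivative estimates I would use the constraint $\rho_{1,-1}=\tfrac12-\rho_{1,1}$ to reduce to implicit differentiation of the two defining equations with respect to the scalar $\rho_{1,1}$. This produces a $2\times2$ linear system for $(\partial q_{1,1}/\partial\rho_{1,1},\partial q_{1,-1}/\partial\rho_{1,1})$ whose coefficient matrix is $I+\tilde O(2^{-k})$ and whose right-hand side is $(1,-1)+\tilde O(2^{-k})$, immediately yielding the first two bounds in (\ref{eqLemma_parameterShift}). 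Differentiating once more, every contribution to the second derivatives involves at least one derivative of $E$ because the identity part contributes zero, so each second derivative is $\tilde O(2^{-k})$.

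The main technical obstacle is uniform bookkeeping of the errors: the bounds on $E$ and its partial derivatives need to hold uniformly on the entire relevant region of the simplex, and one has to bootstrap from the crude uniform boundedness of the first derivatives of $\Phi^{-1}$ supplied by the inverse function theorem to the sharp $\pm 1+\tilde O(2^{-k})$ estimates before the second-derivative bound becomes meaningful.
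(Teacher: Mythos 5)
Your strategy coincides with the paper's: after eliminating $q_{-1,1}=q_{1,-1}$ and $q_{-1,-1}$, the paper likewise regards $(q_{1,1},q_{1,-1})\mapsto(Q_{1,1},Q_{1,-1})=(\rho_{1,1},\rho_{1,-1})$ as a near-identity map on the two-dimensional compact convex set $\cQ$, computes the partial derivatives explicitly to obtain a Jacobian of the form $I+O(k2^{-k})$, and then appeals to the inverse function theorem; the estimates (\ref{eqLemma_parameterShift}) are obtained exactly as in your third step (the $-1$ coming from $\rho_{1,-1}=\tfrac12-\rho_{1,1}$), and your observation that every contribution to the second derivatives must involve a derivative of the error term is the intended mechanism there as well.

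The one genuine gap is existence. A uniformly near-identity Jacobian makes the map injective on the convex domain and a diffeomorphism \emph{onto its image} --- which is precisely how you phrase it --- but the lemma asserts a solution for \emph{any} admissible $\rho$, so you must also show that the prescribed $\rho$ lies in that image; a purely local appeal to the inverse function theorem does not deliver this. The paper closes the point with a boundary argument on $\cQ$: for each fixed $q_{1,-1}\in[0,\tfrac12]$ one has $Q_{1,1}\to0$ as $q_{1,1}\to0$ and $Q_{1,1}>\tfrac12$ as $q_{1,1}\to\tfrac12$, and symmetrically for $Q_{1,-1}$ in $q_{1,-1}$; combined with the everywhere-invertible Jacobian this shows the image covers the relevant range of $(\rho_{1,1},\rho_{1,-1})$. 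Some such surjectivity step (boundary behaviour, a degree or fixed-point argument, or a quantitative bound showing $\|\Phi-\mathrm{id}\|$ is smaller than the distance of $\rho$ to the boundary of the image) needs to be added to your plan; once it is, the rest is sound, since your uniform bound $D\Phi=I+\tilde O(2^{-k})$ on a convex domain also supplies the global, not merely local, uniqueness.
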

\begin{proof}
Let $\cQ$ be the set of all probability distributions $(q_{\pm1\pm1})$ such that  $q_{1,-1}=q_{-1,1}$.
Further, set
	\begin{align*}
	s&=1-2(q_{-1,-1}+q_{1,-1})^k+q_{-1,-1}^k,&Q_{1,1}&=\frac{q_{1,1}}{s},&
		Q_{1,-1}&=\frac{q_{1,-1}(1-(q_{-1,-1}+q_{1,-1})^{k-1})}{s}.
	\end{align*}
Then we aim to study the function $q\mapsto(Q_{1,1},Q_{1,-1})$ on the 2-dimensional compact convex set $\cQ$.
Since $q_{1,-1}=q_{-1,1}$ we have $q_{1,-1}\leq\frac12$ on $\cQ$. 
Similarly, $q_{1,1}\leq\frac12$ and $q_{1,1}+q_{1,-1}\leq\frac12$.
Consequently, $s=1-O(2^{-k})$ and $Q_{1,1},Q_{1,-1}$ are well-defined.
The derivatives of $s$ work out to be
	\begin{align*}
	\frac{\partial s}{\partial q_{1,1}}&=2k(q_{-1,-1}+q_{1,-1})^{k-1}-kq_{-1,-1}^{k-1},&
		\frac{\partial s}{\partial q_{1,-1}}&=2k(q_{-1,-1}+q_{1,-1})^{k-1}-2kq_{-1,-1}^{k-1}.
	\end{align*}
Further,
	\begin{align*}
	\frac{\partial Q_{1,1}}{\partial q_{1,1}}&=\frac{1}{s}-\frac{q_{1,1}}{s^2}\frac{\partial s}{\partial q_{1,1}},\qquad
		\frac{\partial Q_{1,1}}{\partial q_{1,-1}}=\frac{q_{11}}{s^2}\frac{\partial s}{\partial q_{1,-1}},\\
	\frac{\partial Q_{1,-1}}{\partial q_{1,1}}&=\frac{(k-1) q_{1,-1}(q_{-1,-1}+q_{1,-1})^{k-2}}{s}-
				\frac{Q_{1,-1}}{s^2}\frac{\partial s}{\partial q_{1,1}},\\
		\frac{\partial Q_{1,-1}}{\partial q_{1,-1}}&=\frac{1-(q_{-1,-1}+q_{1,-1})^{k-1}+(k-1) q_{1,-1}(q_{-1,-1}+q_{1,-1})^{k-2}}{s}
			-\frac{Q_{1,-1}}{s^2}\frac{\partial s}{\partial q_{1,-1}}.
	\end{align*}
Since $q_{1,-1}\leq\frac12,q_{1,1}\leq\frac12,q_{1,1}+q_{1,-1}\leq\frac12$ on $\cQ$, we see that
	\begin{align*}
	\frac{\partial Q_{1,1}}{\partial q_{1,1}}&=1+O(k2^{-k}),&
		\frac{\partial Q_{1,1}}{\partial q_{1,-1}}&=O(k2^{-k}),&
	\frac{\partial Q_{1,-1}}{\partial q_{1,1}}&=O(k2^{-k}),&
		\frac{\partial Q_{1,-1}}{\partial q_{1,-1}}&=1+O(k2^{-k}).
	\end{align*}
Consequently, the Jacobi matrix is invertible on $\cQ$.
Further, for any  value $q_{1,-1}\in[0,1/2]$ we have $\lim_{q_{1,1}\to 0}Q_{1,1}=0$ and $\lim_{q_{1,1}\to 1/2}Q_{1,1}>1/2$.
Similarly, for  $q_{1,1}\in[0,1/2]$ we have $\lim_{q_{1,-1}\to 0}Q_{1,-1}=0$ and $\lim_{q_{1,-1}\to 1/2}Q_{1,-1}>1/2$.
Therefore, the assertion follows from the inverse function theorem.
\end{proof}

Define a random vector
	$$\vec\chi^\tensor=\vec\chi^\tensor(q)=(\chi_{ij}^{(1)},\chi_{ij}^{(2)})_{i\in[m],j\in[k]}\quad\mbox{such that}\quad
		\pr\brk{(\chi_{ij}^{(1)},\chi_{ij}^{(2)})=(z_1,z_2)}=q_{z_1,z_2}\quad(z_1,z_2\in\{\pm1\})$$
independently for all $i\in[m]$, $j\in[k]$.
Let 
	$$b_{z_1,z_2}=\frac1{km}\sum_{i,j}\vecone\{\chi_{ij}^{(1)}=z_1,\chi_{ij}^{(2)}=z_2)\}\qquad\mbox{for all }z_1,z_2\in\{\pm1\}.$$
Further, let
	$\cB^\tensor(\rho)$ be the event that $b=\rho$.
In analogy to Fact~\ref{Fact_fm} we have

\begin{fact}\label{Fact_sm}
Let $\tau_1,\tau_2:\{x_1,\ldots,x_n\}\to\{\pm1\}$ be truth assignments with overlap $\rho$.
Then the conditional distribution of $\vec\chi^\tensor$ given $\cB^\tensor(\rho)$ coincides with the distribution of the vector
	$(\sign(i,j)\tau_1(\partial(i,j)),\sign(i,j)\tau_2(\partial(i,j)))_{i\in[m],j\in[k]}$.
\end{fact}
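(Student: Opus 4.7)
The plan is to prove Fact~\ref{Fact_sm} by direct counting, mirroring the strategy of Fact~\ref{Fact_fm} but carrying two assignments along at once. The central observation will be that both distributions---the projection of the uniformly random bijection (\ref{eqConfModel}) onto signed truth-value pairs, and the iid vector $\vec\chi^\tensor$ conditioned on $\cB^\tensor(\rho)$---turn out to be \emph{uniform} over the very same combinatorial set, namely the set of arrays in $(\{\pm1\}^2)^{km}$ whose empirical distribution equals $\rho$.  Once both sides are identified as this common uniform distribution, the fact follows immediately.

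First I would do the triple count. A triple $(x,h,s)\in\{x_1,\ldots,x_n\}\times[d]\times\{\pm1\}$ is sent by the map $(x,h,s)\mapsto(s\tau_1(x),s\tau_2(x))$ to a pair $(z_1,z_2)\in\{\pm1\}^2$ precisely when $(\tau_1(x),\tau_2(x))\in\{(z_1,z_2),(-z_1,-z_2)\}$ with $s$ selected correspondingly. Summing over the $d$ copies of each literal and over the two admissible signs, the number of triples landing on $(z_1,z_2)$ is $dn(\rho_{z_1,z_2}+\rho_{-z_1,-z_2})$. The symmetry $\rho_{z_1,z_2}=\rho_{-z_1,-z_2}$ forced by (\ref{eqOverlapParameter}) then collapses this to $2dn\rho_{z_1,z_2}=km\rho_{z_1,z_2}$, so the projection map induces the \emph{exact} type multiplicities $(km\rho_{z_1,z_2})_{z_1,z_2}$.

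Second, since $\PHI$ is a uniformly random bijection, the projected pair-vector is uniform over arrays with those prescribed multiplicities: any two such arrays are the image of exactly $\prod_{z_1,z_2}(km\rho_{z_1,z_2})!$ bijections.  On the other side, conditional on $\cB^\tensor(\rho)$ the iid array $\vec\chi^\tensor$ is equally uniform over arrays with the same type counts, because every such array has iid probability $\prod_{z_1,z_2}q_{z_1,z_2}^{km\rho_{z_1,z_2}}$, and this common value cancels under the conditioning---the particular choice of $q$ is irrelevant at this stage.  Matching the two uniform distributions yields the claim.

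The only subtlety I anticipate is the interpretation of ``overlap $\rho$'' when $\tau_1,\tau_2$ are not balanced, but the symmetry built into~(\ref{eqOverlapParameter}) restricts attention to $\rho$ with $\rho_{z_1,z_2}=\rho_{-z_1,-z_2}$, which is exactly the identity exploited in the triple count of the first step. No analytic machinery is required beyond this elementary double counting, and the argument is essentially a one-line enumeration once the two sides have been normalised against their common support.
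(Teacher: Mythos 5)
Your proof is correct and follows essentially the same route the paper has in mind: exactly as for Fact~\ref{Fact_fm}, the $d$-regular sign structure forces the projected array to have the fixed type counts $km\rho_{z_1,z_2}$, and both the image of the uniform bijection~(\ref{eqConfModel}) and $\vec\chi^\tensor$ conditioned on $\cB^\tensor(\rho)$ are uniform on the set of arrays with those counts. The paper simply treats this as immediate, so your write-up is just a spelled-out version of the same exchangeability argument.
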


\noindent
Fact~\ref{Fact_sm} as well as the following three claims already appear in~\cite{SAT};
	we include the short proofs for the sake of completeness.

\begin{claim}\label{Claim_Brho}
Uniformly for all $\rho,q$ such that $\rho_{s,t},q_{s,t}\in[1/8,3/8]$ for all $s,t\in\{\pm1\}$ we have
	\begin{align*}
	\ln\pr\brk{\cB^\tensor(\rho)}&=-\frac32\ln n-km\KL{\rho}{q}+O(1).
	\end{align*}
Furthermore, uniformly for all $\rho$ we have
	\begin{align*}
	\ln\pr\brk{\cB^\tensor(\rho)}&\leq-km\KL{\rho}{q}+O(1).
	\end{align*}
\end{claim}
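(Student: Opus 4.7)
The plan is to exploit the fact that $\vec\chi^\tensor$ is an i.i.d.\ sample of $km=2dn$ draws from the four-point distribution $(q_{z_1,z_2})$, so that the event $\cB^\tensor(\rho)$ has the explicit multinomial probability
$$\pr[\cB^\tensor(\rho)] \;=\; \binom{km}{(km\rho_{z_1,z_2})_{z_1,z_2}}\prod_{z_1,z_2\in\{\pm1\}}q_{z_1,z_2}^{km\rho_{z_1,z_2}}.$$
Both parts of the claim then reduce to estimating the multinomial coefficient.

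For the first (precise) statement, I would apply Stirling's formula $N!=\sqrt{2\pi N}(N/\eul)^N(1+O(1/N))$ to each of the five factorials. Because $\rho_{z_1,z_2}\in[1/8,3/8]$, all five arguments are $\Theta(n)$, so the relative error is $O(1/n)$ uniformly, and a routine computation yields
$$\binom{km}{(km\rho_{z_1,z_2})_{z_1,z_2}} \;=\; \frac{\exp(km\cdot H(\rho))}{(2\pi km)^{3/2}\prod_{z_1,z_2}\rho_{z_1,z_2}^{1/2}}\,(1+O(1/n)),$$
where $H(\rho)=-\sum\rho\ln\rho$ denotes the Shannon entropy. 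The algebraic identity $H(\rho)+\sum\rho\ln q=-\KL{\rho}{q}$ then gives
$$\pr[\cB^\tensor(\rho)] \;\sim\; \frac{\exp(-km\KL{\rho}{q})}{(2\pi km)^{3/2}\prod_{z_1,z_2}\rho_{z_1,z_2}^{1/2}}.$$
Taking logarithms and using $\ln(km)=\ln n+O(1)$ (since $k,d$ are treated as fixed), the factor $-\tfrac12\sum\ln\rho_{z_1,z_2}$ is $O(1)$ throughout the compact region $[1/8,3/8]^4$, so we obtain $\ln\pr[\cB^\tensor(\rho)]=-\tfrac{3}{2}\ln n-km\KL{\rho}{q}+O(1)$ uniformly in that region.

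For the uniform upper bound I would use the textbook large-deviation inequality: the quantity $\binom{N}{(Np_i)}\prod p_i^{Np_i}$ is itself the probability mass placed on the empirical distribution $p$ when sampling from $p$, and hence is at most $1$. Applied with $p=\rho$ this yields $\binom{km}{(km\rho_{z_1,z_2})_{z_1,z_2}}\leq\prod\rho_{z_1,z_2}^{-km\rho_{z_1,z_2}}$, from which
$$\pr[\cB^\tensor(\rho)] \;\leq\; \prod_{z_1,z_2}(q_{z_1,z_2}/\rho_{z_1,z_2})^{km\rho_{z_1,z_2}} \;=\; \exp(-km\KL{\rho}{q}),$$
which is in fact stronger than the stated bound (the $O(1)$ is zero). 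The conventions $0\ln 0=0\ln(0/0)=0$ from (\ref{eqKL}) take care of the boundary cases in which $\rho$ has a vanishing coordinate. The entire argument is essentially a local central limit theorem for the multinomial distribution; no serious obstacle arises, the only mild point being to verify that the $O(1/n)$ Stirling error is genuinely uniform over the compact parameter region, which is immediate since each factorial argument stays of order $n$.
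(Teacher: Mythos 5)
Your proposal is correct and follows the same route as the paper: the paper likewise writes $\pr\brk{\cB^\tensor(\rho)}$ as the multinomial probability $\bink{km}{\rho km}\prod_{z_1,z_2}q_{z_1,z_2}^{km\rho_{z_1,z_2}}$ and invokes Stirling's formula. Your explicit method-of-types bound for the second (uniform) inequality is just the standard way of making the paper's one-line "apply Stirling" precise for degenerate $\rho$, so there is nothing substantively different to flag.
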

\begin{proof}
We have
	\begin{align*}
	\pr\brk{\cB^\tensor(\rho)}&=\bink{km}{\rho km}\prod_{z_1,z_2\in\{\pm1\}}q_{z_1,z_2}^{km\rho_{z_1,z_2}}.
	\end{align*}
The claim follows by applying Stirling's formula.
\end{proof}

\noindent
Further, consider the event
 $$\cS^\tensor=\cbc{\forall i\in[m]\,\exists j,j'\in[k]:
	\chi_{ij}^{(1)}=\chi_{ij'}^{(1)}=1}.$$
If we think of the $k$-tuples $(\chi_{ij}^{(1)})_{j\in[k]},(\chi_{ij}^{(2)})_{j\in[k]}$ as the truth value combinations induced on a clause
by a pair $(\tau_1,\tau_2)$ of Boolean assignments, then $\cS^\tensor$ corresponds to the event that both $\tau_1,\tau_2$ are satisfying.

\begin{claim}\label{Claim_SsecondMoment}
We have
	$\pr\brk{\cS^\tensor}=(1-2(q_{-1,-1}+q_{-1,1})^k+q_{-1,-1}^k)^m$.
\end{claim}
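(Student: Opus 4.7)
The plan is to combine independence across clauses with a single inclusion--exclusion on each clause. Since the entries $(\chi^{(1)}_{ij},\chi^{(2)}_{ij})$ of $\vec\chi^\tensor$ are mutually independent across all pairs $(i,j)\in[m]\times[k]$, the single-clause events indexed by different $i\in[m]$ are independent. Hence
\begin{align*}
\pr[\cS^\tensor]
=\prod_{i=1}^m\pr\brk{\exists j,j'\in[k]:\chi^{(1)}_{ij}=1\ \text{and}\ \chi^{(2)}_{ij'}=1}
=p^m,
\end{align*}
and it remains to show $p=1-2(q_{-1,-1}+q_{-1,1})^k+q_{-1,-1}^k$.

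To compute $p$ I would apply inclusion--exclusion to the complementary events $A_t=\{\chi^{(t)}_{ij}=-1\text{ for all }j\in[k]\}$ for $t=1,2$. By independence of the entries across positions $j\in[k]$ within clause $i$, and since the $\chi^{(1)}$-marginal equals $q_{-1,-1}+q_{-1,1}$ while the $\chi^{(2)}$-marginal equals $q_{-1,-1}+q_{1,-1}$, the symmetry constraint $q_{1,-1}=q_{-1,1}$ from~(\ref{eqLemma_parameterShifteqn}) gives $\pr[A_1]=\pr[A_2]=(q_{-1,-1}+q_{-1,1})^k$. Likewise $\pr[A_1\cap A_2]$ is the probability that $(\chi^{(1)}_{ij},\chi^{(2)}_{ij})=(-1,-1)$ for every $j\in[k]$, which equals $q_{-1,-1}^k$. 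Therefore
\begin{align*}
p=1-\pr[A_1\cup A_2]=1-2(q_{-1,-1}+q_{-1,1})^k+q_{-1,-1}^k,
\end{align*}
and raising to the $m$-th power yields the claim.

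There is essentially no obstacle: the only thing one must be careful about is reading the defining event of $\cS^\tensor$ correctly as ``both the assignment~$1$ and the assignment~$2$ satisfy clause~$i$'', which is precisely $\overline{A_1\cup A_2}$ under the dictionary described after the definition of $\cS^\tensor$. Once that is unpacked, the computation is a one-line product of independent Bernoulli-like terms.
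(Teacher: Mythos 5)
Your proof is correct and follows essentially the same route as the paper: independence across the $m$ clauses, then inclusion--exclusion on the two per-clause ``all-false'' events, using $q_{1,-1}=q_{-1,1}$ to equate the two marginals. (Incidentally, your careful reading of the event $\cS^\tensor$ as ``both assignments satisfy clause $i$'' is the intended one, matching the paper's computation.)
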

\begin{proof}
By inclusion{-}exclusion, for any $i\in[m]$ we have
	\begin{align*}
	\pr\brk{\exists h\in\{1,2\}:\forall j\in[k]:\chi_{ij}^{(h)}=-1}&=(q_{-1,-1}+q_{-1,1})^k+(q_{-1,-1}+q_{1,-1})^k-q_{-1,-1}^k
		=2(q_{-1,-1}-q_{-1,1})^k+q_{-1,-1}^k.
	\end{align*}
The assertion follows from the independence of the entries of $\vec\chi^\tensor$.
\end{proof}

\begin{claim}\label{Claim_BsecondMoment}
Uniformly for all $\rho,q$ such that $\rho_{s,t},q_{s,t}\in[1/8,3/8]$ for all $s,t\in\{\pm1\}$ we have
	\begin{align*}
	\ln\pr\brk{\cB^\tensor(\rho)|\cS^\tensor}&=-\frac32\ln n+O(1).
	\end{align*}
\end{claim}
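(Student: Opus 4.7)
My plan is to reduce to a local central limit theorem via Bayes' rule. First I would write
$$\pr[\cB^\tensor(\rho)\mid\cS^\tensor] = \frac{\pr[\cS^\tensor\mid\cB^\tensor(\rho)]\,\pr[\cB^\tensor(\rho)]}{\pr[\cS^\tensor]},$$
so that two of the three factors are supplied directly by Claims~\ref{Claim_Brho} and~\ref{Claim_SsecondMoment}. The task thus reduces to estimating $\pr[\cS^\tensor\mid\cB^\tensor(\rho)]$.

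The pivotal observation is that $\pr[\cS^\tensor\mid\cB^\tensor(\rho)]$ depends on $\rho$ alone and not on $q$: once we condition on $\cB^\tensor(\rho)$, the array $\vec\chi^\tensor$ is simply a uniformly random rearrangement of a fixed multiset of pairs with frequencies $\rho$, and this distribution is the same regardless of the underlying $q$. I will therefore evaluate $\pr[\cS^\tensor\mid\cB^\tensor(\rho)]$ in the auxiliary model $\pr^{(\rho)}$ where every pair is drawn i.i.d.\ with probabilities $\rho_{z_1,z_2}$. Applying Bayes in this auxiliary model gives
$$\pr[\cS^\tensor\mid\cB^\tensor(\rho)] = \pr^{(\rho)}[\cS^\tensor\mid\cB^\tensor(\rho)] = \frac{\pr^{(\rho)}[\cB^\tensor(\rho)\mid\cS^\tensor]\,\pr^{(\rho)}[\cS^\tensor]}{\pr^{(\rho)}[\cB^\tensor(\rho)]}.$$
Claim~\ref{Claim_Brho} applied with $q=\rho$ (so that $\KL{\rho}{\rho}=0$) gives $\pr^{(\rho)}[\cB^\tensor(\rho)]=\Theta(n^{-3/2})$, and Claim~\ref{Claim_SsecondMoment} applied with $\rho$ in place of $q$ gives $\pr^{(\rho)}[\cS^\tensor] = p_*(\rho)^m$, writing $p_*(x)=1-2(x_{-1,-1}+x_{-1,1})^k+x_{-1,-1}^k$ for short.

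The heart of the argument is then to show $\pr^{(\rho)}[\cB^\tensor(\rho)\mid\cS^\tensor]=\Theta(n^{-3/2})$ by a multidimensional lattice local CLT. Under $\pr^{(\rho)}$ conditional on $\cS^\tensor$, the rows $(\chi^{(1)}_{ij},\chi^{(2)}_{ij})_{j\in[k]}$ remain independent and identically distributed (the per-row goodness condition factorises), so $(mb_{z_1,z_2})_{z_1,z_2}$ is a sum of $m$ i.i.d.\ lattice-valued summands, living on the $3$-dimensional integer sublattice cut out by $\sum b_{z_1,z_2}=1$. A per-row computation in the spirit of Lemma~\ref{Lemma_parameterShift} shows that the conditional per-pair mean agrees with $\rho$ up to $O(2^{-k})$, and that the conditional covariance inherits non-degeneracy from the unconditional case with only $O(2^{-k})$ corrections, uniformly for parameters in the compact range $[1/8,3/8]^4$. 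A standard multidimensional lattice local CLT then yields the desired $\Theta(n^{-3/2})$.

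Substituting back gives $\pr[\cS^\tensor\mid\cB^\tensor(\rho)] = \Theta(p_*(\rho)^m)$, and plugging this into the first display together with Claims~\ref{Claim_Brho} and~\ref{Claim_SsecondMoment} yields
$$\ln\pr[\cB^\tensor(\rho)\mid\cS^\tensor] = -\frac{3}{2}\ln n + \bigl(m\ln p_*(\rho) - m\ln p_*(q) - km\KL{\rho}{q}\bigr) + O(1).$$
I expect the main obstacle to be the final algebraic reconciliation $m\ln p_*(\rho) - m\ln p_*(q) - km\KL{\rho}{q} = O(1)$: this is a saddle-point identity expressing the compatibility between the overlap parameter $\rho$ and the underlying $q$, precisely the reason the parameterisation in Lemma~\ref{Lemma_parameterShift} is the natural one. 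Verifying it uniformly in the compact range, together with uniformity of the lattice local CLT under the $O(2^{-k})$ mean tilt induced by conditioning on $\cS^\tensor$, is the delicate technical step.
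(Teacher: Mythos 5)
Your two applications of Bayes' rule are exact, and the observation that $\pr\brk{\cS^\tensor\mid\cB^\tensor(\rho)}$ does not depend on $q$ is correct; but the two estimates you then assert are both false, and this is where the proof breaks. Under $\pr^{(\rho)}\brk{\,\cdot\mid\cS^\tensor}$ the per-slot mean of the pair frequencies is not $\rho$ but the vector $Q$ from the proof of \Lem~\ref{Lemma_parameterShift} evaluated at $\rho$, which differs from $\rho$ by $\Theta(2^{-k})$ in each coordinate. That discrepancy is small in $k$ but \emph{constant in $n$}: over the $km=\Theta(n)$ slots it displaces the target count vector $km\rho$ from the conditional mean by $\Theta(n)$, i.e.\ by $\Theta(\sqrt n)$ standard deviations, so the lattice local CLT gives $\pr^{(\rho)}\brk{\cB^\tensor(\rho)\mid\cS^\tensor}=\Theta(n^{-3/2})\exp(-\Omega(n))$, not $\Theta(n^{-3/2})$. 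Correspondingly, the ``algebraic reconciliation'' you postpone to the end is not an identity. Writing, as you do, $p_*(x)=1-2(x_{-1,-1}+x_{-1,1})^k+x_{-1,-1}^k$, take $\rho=\frac14\vecone$ and $q=q(\rho)$ from \Lem~\ref{Lemma_parameterShift}, so that $q_{st}=q_sq_t$ with $q_1=q=\frac12-\eps$, $\eps\sim2^{-k-1}$ by (\ref{eqq}); then $km\KL{\rho}{q}=(4k\eps^2+O(k\eps^4))m$ while $m\ln p_*(\rho)-m\ln p_*(q)=(8k\eps^2+O(k\eps^3))m$, so your bracket equals $(1+o(1))4k\eps^2m=\Theta(nk2^{-k})$, which diverges linearly in $n$. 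In fact the exact tilting identity
\begin{align*}
\pr\brk{\cB^\tensor(\rho)\mid\cS^\tensor}=\pr^{(\rho)}\brk{\cB^\tensor(\rho)\mid\cS^\tensor}\cdot\exp\bc{m\ln p_*(\rho)-m\ln p_*(q)-km\KL{\rho}{q}}
\end{align*}
forces your two errors to cancel exactly, so no amount of care in the ``delicate technical step'' can make both assertions true: to evaluate $\pr^{(\rho)}\brk{\cB^\tensor(\rho)\mid\cS^\tensor}$ correctly you would have to carry out precisely the displaced-mean (moderate deviation) computation that the detour was meant to avoid.

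The intended argument is direct and avoids the auxiliary model altogether. Given $\cS^\tensor$, the $m$ rows of $\vec\chi^\tensor$ are independent and identically distributed bounded lattice random vectors, and the defining equations (\ref{eqLemma_parameterShifteqn}) of $q=q(\rho)$ say precisely that the conditional mean of the empirical pair-frequency vector equals $\rho$ (this coupling of $q$ to $\rho$ is how the claim is used throughout \Sec~\ref{Sec_boundary}; for an arbitrary pair $(\rho,q)$ in $[1/8,3/8]^4$ the statement would fail for exactly the reason identified above). With the target $km\rho$ sitting at the conditional mean and the covariance nondegenerate on the three-dimensional lattice determined by $\sum_{s,t}b_{s,t}=1$, the local limit theorem for sums of independent bounded random variables yields $\pr\brk{\cB^\tensor(\rho)\mid\cS^\tensor}=\Theta(m^{-3/2})=\Theta(n^{-3/2})$ uniformly on the stated compact range, which is the claim.
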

\begin{proof}
This follows from the local limit theorem for sums of independent bounded random variables (e.g.,~\cite{DavisMcDonald}).
\end{proof}

Departing from the argument in~\cite{SAT}, we are now going to accommodate the additional constraint that
the clause marginals follow some specific distribution $\mu$ on $\Sigma$.
Hence, let $\cM_m(\rho)$ be the set of all probability distributions $\vec\nu=(\nu(\sigma,\tau))_{\sigma,\tau\in\Sigma}$ such that
$m\nu(\sigma,\tau)$ is an integer for all $\sigma,\tau\in\Sigma$ and
	\begin{align*}
	\sum_{i=1}^k\sum_{\sigma,\tau\in\Sigma}\nu(\sigma,\tau)\vecone\{\sigma_i=s,\tau_i=t\}=\rho_{s,t}\qquad\mbox{for all }s,t\in\{\pm1\}.
	\end{align*}
Additionally, for a given probability distribution $\mu=(\mu(\sigma))_{\sigma\in\Sigma}$ let $\cM_m(\rho,\mu)$ be the set of all $\vec\nu\in\cM_m(\rho)$ such that
	\begin{align*}
	\sum_{\tau\in\Sigma}\nu(\sigma,\tau)=\sum_{\tau\in\Sigma}\nu(\tau,\sigma)&=\mu(\sigma)\qquad\mbox{for all }\sigma\in\Sigma.
	\end{align*}
Clearly, the vector $\vec\chi^\tensor$ induces a distribution $\vec\nu_{\vec\chi^\tensor}$ by
	\begin{align*}
	\nu_{\vec\chi^\tensor}(\sigma,\tau)&=\frac1m\sum_{i=1}^m\prod_{j=1}^k\vecone\{\chi_{ij}^{(1)}=\sigma_i,\chi_{ij}^{(2)}=\tau_i\}.
	\end{align*}
Letting  $p(\vec\nu)=\pr\brk{\vec\nu_{\vec\chi^\tensor}=\vec\nu|\cB^\tensor(\rho)\cap\cS^\tensor}$ for $\vec\nu\in\cM_m(\rho)$,
	 recalling (\ref{eqrhovsalpha}) and using Fact~\ref{Fact_sm}, we find
	\begin{align*}
	\Erw[Z_{\mu,\rho}^\tensor]&=\Erw[Z_\rho^\tensor]\sum_{\vec\nu\in\cM_m(\rho,\mu)}p(\vec\nu).
	\end{align*}
Let $\bar{\vec\nu}(\rho)=(\bar\nu_{\sigma,\tau}(\rho))_{\sigma,\tau\in\Sigma}$ with 
	\begin{equation}\label{eqbarnu}
	\bar\nu_{\sigma,\tau}(\rho)=\frac1s\prod_{i=1}^kq_{\sigma(i),\tau(i)}.
	\end{equation}
{Then Fact~\ref{Fact_sm} shows that $\bar{\vec\nu}(\rho)$ describes the expected statistics of the ``clause overlaps'' given overlap $\rho$.
More precisely, if we fix two truth assignments with overlap $\rho$ and then generate a random formula subject to the condition
that both assignments are satisfying, then we expect to see
$\bar\nu_{\sigma,\tau}(\rho)m$ clauses that are satisfied according to the truth value pattern $\sigma$ under the first assignment
and according to the truth value pattern $\tau$ under the second one.}
By Stirling's formula, 
	\begin{align}\label{eqpnu}
	p(\vec\nu)&=\frac1{\pr\brk{\cB^\tensor(\rho)|\cS^\tensor}}\bink{m}{m\vec\nu}\prod_{s,t\in\{\pm1\}}q_{s,t}^{km\rho_{s,t}}
		=\frac{r(\vec\nu)}{\pr\brk{\cB^\tensor(\rho)|\cS^\tensor}}\exp(-m\KL{\vec\nu}{\bar{\vec\nu}(\rho)}),\qquad\mbox{where}\\
	r(\vec\nu)&\sim(2\pi m)^{(1-|\Sigma|^2)/2}
		\prod_{\sigma,\tau\in\Sigma}\bar\nu_{\sigma,\tau}(\rho)^{-\frac12}\qquad\mbox{uniformly for $\nu$ s.t.\ }
			|\nu_{\sigma,\tau}-\bar\nu_{\sigma,\tau}|\leq m/\ln m\mbox{ for all }\sigma,\tau
				\label{eqpnu2}
	\end{align}
and $r(\nu)=O(1)$ for all $\vec\nu$.

\begin{claim}\label{Claim_cutoff1}
If $|\rho_{1,1}-\frac14|\leq\ln n/\sqrt n$, then $|\bar\nu_{\sigma,\tau}(\bar\rho)-\nu_{\sigma,\tau}(\rho)|\leq\ln^2n/\sqrt n$.
\end{claim}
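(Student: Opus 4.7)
The plan is to introduce $\bar\nu_{\sigma,\tau}(\rho)$ as an intermediate point and split via the triangle inequality,
\begin{align*}
|\bar\nu_{\sigma,\tau}(\bar\rho)-\nu_{\sigma,\tau}(\rho)|
\leq |\bar\nu_{\sigma,\tau}(\bar\rho)-\bar\nu_{\sigma,\tau}(\rho)|+|\bar\nu_{\sigma,\tau}(\rho)-\nu_{\sigma,\tau}(\rho)|,
\end{align*}
and to handle each summand by a separate mechanism. The first summand captures the Lipschitz dependence of a single function $\bar\nu_{\sigma,\tau}(\,\cdot\,)$ in $\rho$, while the second captures the pointwise discrepancy between the two distinct functions $\bar\nu$ and $\nu$ at the same argument $\rho$. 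The condition $|\rho_{1,1}-\tfrac14|\leq\ln n/\sqrt n$ will make the first piece small through Lipschitz continuity; the structural relationship between $\bar\nu$ and $\nu$ will make the second piece small uniformly on the relevant region.

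For the first summand I plan to use the parametrisation (\ref{eqOverlapParameter}), which reduces $\rho$ to its single free coordinate $\rho_{1,1}$, together with the definition $\bar\nu_{\sigma,\tau}(\rho)=s(\rho)^{-1}\prod_{i=1}^k q_{\sigma(i),\tau(i)}(\rho)$ where $s(\rho)=1-2(q_{-1,-1}+q_{-1,1})^k+q_{-1,-1}^k$ and $q(\rho)$ is the implicit function supplied by \Lem~\ref{Lemma_parameterShift}. By the derivative estimates in~(\ref{eqLemma_parameterShift}) we have $\partial q_{s,t}/\partial\rho_{1,1}=\pm 1+\tilde O(2^{-k})$, and a direct computation gives $s=1-O(2^{-k})$ with $|\partial s/\partial\rho_{1,1}|=O(k2^{-k})$; plugging these into the product/quotient rule for $\partial\bar\nu_{\sigma,\tau}/\partial\rho_{1,1}$ yields an $n$-independent bound $C(k)$. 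The mean value theorem applied on the interval between $\rho_{1,1}$ and $\tfrac14$ then delivers $|\bar\nu_{\sigma,\tau}(\bar\rho)-\bar\nu_{\sigma,\tau}(\rho)|\leq C(k)\ln n/\sqrt n$, which is at most $\tfrac12\ln^2 n/\sqrt n$ once $n$ is large.

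For the second summand I plan to exploit the structural relationship between the two functions: $\bar\nu$ arises from $\nu$ (the raw product of $\rho$-coordinates, before conditioning on the satisfaction event $\cS^\tensor$) by a multiplicative normalisation of $s(\rho)^{-1}=1+O(2^{-k})$. Since the un-normalised product lies in $[0,1]$, this pushes $|\bar\nu_{\sigma,\tau}(\rho)-\nu_{\sigma,\tau}(\rho)|=O(2^{-k})$ uniformly for $\rho$ in the region of interest, which is dominated by $\tfrac12\ln^2 n/\sqrt n$ as soon as $n$ is large. Adding the two contributions closes the proof.

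The main obstacle is the book-keeping in the chain-rule estimate for the first summand: one must verify that neither $\partial s/\partial\rho_{1,1}$ nor the $\partial q_{s,t}/\partial\rho_{1,1}$ contributions conspire to produce an unbounded Lipschitz constant as $\rho$ ranges over the (compact) admissible region. Thanks to the explicit $\pm1+\tilde O(2^{-k})$ control in (\ref{eqLemma_parameterShift}) and the trivial bound $s\geq\tfrac12$ on that region, this collapses to a routine differentiation. The remaining subtlety is ensuring that the second summand really is $O(2^{-k})$ rather than merely $o(1)$; this will follow from the observation that the ``exceptional'' clause patterns excluded when passing from $\nu$ to $\bar\nu$ are precisely those violating $\cS^\tensor$, whose combined probability is $O(2^{-k})$ under the product measure.
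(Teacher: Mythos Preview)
Your first-summand argument is correct and is exactly the paper's proof: the authors simply note that $\bar\nu_{\sigma,\tau}(\rho)$ is given by (\ref{eqbarnu}) in terms of the implicit parameters $q_{s,t}(\rho)$, whose derivatives are bounded by \Lem~\ref{Lemma_parameterShift}, so the mean value theorem over an interval of length at most $\ln n/\sqrt n$ delivers the bound.

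The second summand, however, is based on a misreading. The paper does not define any separate function $\nu_{\sigma,\tau}(\rho)$; the symbol $\vec\nu$ in this section denotes a generic element of $\cM_m(\rho)$, not a function of $\rho$. The statement as printed is a typo for $|\bar\nu_{\sigma,\tau}(\bar\rho)-\bar\nu_{\sigma,\tau}(\rho)|$, i.e.\ the \emph{same} function evaluated at the two nearby points $\bar\rho$ and $\rho$. The paper's one-line proof (``derivatives of the implicit parameter $q$ are bounded'') confirms this reading.

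Your handling of the second summand also contains a genuine error that you should note: you bound $|\bar\nu_{\sigma,\tau}(\rho)-\nu_{\sigma,\tau}(\rho)|$ by $O(2^{-k})$ and then assert this is dominated by $\tfrac12\ln^2 n/\sqrt n$ for large $n$. But $k$ is fixed while $n\to\infty$, so $O(2^{-k})$ is a positive constant whereas $\ln^2 n/\sqrt n\to 0$. The inequality goes the wrong way. Had the statement really involved two distinct functions differing by an $O(2^{-k})$ normalisation, the claim would simply be false. Once you drop the spurious second term, your argument coincides with the paper's.
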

\begin{proof}
This follows from (\ref{eqbarnu}) and the fact that the derivatives of the implicit parameter $q$ are bounded.
\end{proof}

\begin{claim}\label{Claim_cutoff2}
Uniformly for $\rho$ such that $|\rho_{1,1}-\frac14|\leq\ln n/\sqrt n$ we have
	\begin{align*}
	\Erw[Z_{\mu,\rho}^\tensor]&\sim\Erw[Z_{\mu,\rho}^\tensor]
	\sum_{\nu\in\cM_m(\rho):\|\vec\nu-\bar{\vec\nu}(\rho)\|_\infty\leq m^{-1/3}}p(\vec\nu).
	\end{align*}
\end{claim}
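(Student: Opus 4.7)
The plan is to show that the discrete distribution $p(\cdot)$ defined just before the claim is sharply concentrated on a sup-norm ball of radius $m^{-1/3}$ around $\bar{\vec\nu}(\rho)$, so that truncating the sum to this ball introduces only a negligible error. First, I would combine (\ref{eqpnu}) and (\ref{eqpnu2}) with Claim~\ref{Claim_BsecondMoment} to obtain the crude upper bound
\begin{align*}
p(\vec\nu)\leq O(n^{3/2})\exp\bc{-m\KL{\vec\nu}{\bar{\vec\nu}(\rho)}}\qquad\text{for every }\vec\nu\in\cM_m(\rho),
\end{align*}
using the fact noted after (\ref{eqpnu2}) that $r(\vec\nu)=O(1)$ unconditionally. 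In the regime $|\rho_{1,1}-1/4|\leq\ln n/\sqrt n$, \Lem~\ref{Lemma_parameterShift} together with (\ref{eqbarnu}) implies that every entry $\bar\nu_{\sigma,\tau}(\rho)$ lies within $\tilde O(2^{-k})$ of the corresponding entry of $\bar{\vec\nu}(1/4)$, so is uniformly bounded away from $0$.

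The second step is a quadratic lower bound on the KL divergence. Because the entries of the minimizer $\bar{\vec\nu}(\rho)$ are bounded below by a constant $c_0>0$, Taylor-expanding $\KL{\cdot}{\bar{\vec\nu}(\rho)}$ at its minimum yields a constant $c_1>0$ with
\begin{align*}
\KL{\vec\nu}{\bar{\vec\nu}(\rho)}\geq c_1\|\vec\nu-\bar{\vec\nu}(\rho)\|_2^2\geq c_1\|\vec\nu-\bar{\vec\nu}(\rho)\|_\infty^2
\end{align*}
for all $\vec\nu$ in a fixed neighbourhood of $\bar{\vec\nu}(\rho)$, while outside this neighbourhood the KL divergence is bounded below by a positive constant and the inequality holds trivially. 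Consequently, if $\|\vec\nu-\bar{\vec\nu}(\rho)\|_\infty>m^{-1/3}$ then $m\KL{\vec\nu}{\bar{\vec\nu}(\rho)}\geq c_1 m^{1/3}$.

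The third step combines these with the naive count $|\cM_m(\rho)|\leq(m+1)^{|\Sigma|^2}$ (a polynomial in $m$, since $k$ is fixed) to conclude
\begin{align*}
\sum_{\vec\nu\in\cM_m(\rho):\|\vec\nu-\bar{\vec\nu}(\rho)\|_\infty>m^{-1/3}}p(\vec\nu)\leq (m+1)^{|\Sigma|^2}\cdot O(n^{3/2})\exp(-c_1 m^{1/3})=\exp(-\Omega(m^{1/3})).
\end{align*}
Since $\cM_m(\rho,\mu)\subset\cM_m(\rho)$, the same bound applies to the restricted sum, so the tail contribution to $\Erw[Z_\rho^\tensor]\sum_{\vec\nu\in\cM_m(\rho,\mu)}p(\vec\nu)=\Erw[Z_{\mu,\rho}^\tensor]$ is subexponentially small. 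Because $\sum_{\vec\nu\in\cM_m(\rho,\mu)}p(\vec\nu)=\Erw[Z_{\mu,\rho}^\tensor]/\Erw[Z_\rho^\tensor]$ is bounded below by an inverse polynomial in $n$ (it is essentially $|\cM_\omega|^{-2}$ up to constants, by the symmetry used in the proof of \Prop~\ref{Prop_fm}), the tail is a $(1+o(1))$ perturbation of the whole sum, which is the claim.

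The main obstacle is ensuring the quadratic lower bound on $\KL{\cdot}{\bar{\vec\nu}(\rho)}$ holds uniformly as $\rho$ varies over the window $|\rho_{1,1}-1/4|\leq\ln n/\sqrt n$: this requires the Hessian at $\bar{\vec\nu}(\rho)$ to be uniformly positive definite and the Taylor remainder to be controlled on the $m^{-1/3}$ scale, both of which follow from the entries of $\bar{\vec\nu}(\rho)$ staying uniformly bounded away from $0$ and from $1$, a fact guaranteed by \Lem~\ref{Lemma_parameterShift} and (\ref{eqbarnu}).
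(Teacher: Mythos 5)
Your argument is correct and is essentially the paper's own: the paper disposes of Claim~\ref{Claim_cutoff2} in one line by citing (\ref{eqpnu}) and the strict convexity of the Kullback--Leibler divergence, which is precisely the tail estimate you spell out (the crude bound $p(\vec\nu)\leq O(n^{3/2})\exp(-m\KL{\vec\nu}{\bar{\vec\nu}(\rho)})$ via Claim~\ref{Claim_BsecondMoment} and $r(\vec\nu)=O(1)$, a quadratic lower bound on the divergence away from $\bar{\vec\nu}(\rho)$ whose entries stay bounded away from $0$ in the given window, and a polynomial count of summands). Your final assertion that the retained sum is only inverse-polynomially small is stated somewhat loosely, but it is the same kind of estimate the paper itself uses implicitly (compare (\ref{eqProp_fm_2})), so your write-up is a fleshed-out version of the paper's argument rather than a different route.
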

\begin{proof}
This follows from (\ref{eqpnu}) and the fact that the Kullback-Leibler divergence is strictly convex.
\end{proof}

\begin{proof}[Proof of \Lem~\ref{Lemma_sm}]
Let $a>0$.
By (\ref{eqProp_fm_2}) we have
	\begin{align}\label{eqProp_fm_2_a}
	\Erw[Z_\mu]/\Erw[Z]&=\Theta(m^{1-|\Sigma|/2})
	\end{align}
uniformly for all $\mu\in\cM_\Omega$.
Therefore, letting
	\begin{align*}
	S&=\sum_{\rho:a<|\rho_{1,1}-\frac14|\leq n^{-\frac12}\ln n}\Erw[Z_{\mu,\rho}^\tensor],
	\end{align*}
we obtain from \Lem~\ref{Lemma_Kosta} and Claim~\ref{Claim_cutoff2} that
	\begin{align*}
	S&\sim S'=
		\sum_{\rho:a<|\rho_{1,1}-\frac14|\leq n^{-\frac12}\ln n}\Erw[Z_\rho^\tensor]
		\sum_{\vec\nu\in\cM_m(\rho,\mu):\|\vec\nu-\bar{\vec\nu}(\rho)\|_\infty\leq m^{-1/3}}p(\vec\nu).
	\end{align*}
Hence, (\ref{eqpnu}) and (\ref{eqpnu2}) yield
	\begin{align*}
	S'&\sim S''=
		\frac{(2\pi m)^{(1-|\Sigma|^2)/2}}{\prod_{\sigma,\tau\in\Sigma}\bar\nu_{\sigma,\tau}(\bar\rho)^{\frac12}}
		\sum_{\rho:a<|\rho_{1,1}-\frac14|\leq n^{-\frac12}\ln n}\frac{\Erw[Z_\rho^\tensor]}{\pr\brk{\cB^\tensor|\cS^\tensor}}
			\sum_{\vec\nu\in\cM_m(\rho,\mu):\|\vec\nu-\bar{\vec\nu}(\rho)\|_\infty\leq m^{-1/3}}
			\exp\brk{-m\KL{\vec\nu}{\bar{\vec\nu}(\rho)}}.
	\end{align*}
Estimating the last sum via the Laplace method and using Claim~\ref{Claim_cutoff1} once more, we see that uniformly for all $\rho,\mu$ (again using convexity of the Kullback-Leibler divergence)
	\begin{align*}
	\sum_{\vec\nu\in\cM_m(\rho,\mu):\|\vec\nu-\bar{\vec\nu}(\rho)\|_\infty\leq m^{-1/3}}\exp\brk{-m\KL{\vec\nu}{\bar{\vec\nu}(\rho)}}&
		\leq O(m^{(|\Sigma|^2-2|\Sigma|)/2}).
	\end{align*}
Consequently, Claim~\ref{Claim_BsecondMoment} yields 
	\begin{align*}
	S''&\leq O(m^{2-|\Sigma|})\Erw[Z]^2\exp(-a^2/16),
	\end{align*}
provided that $a$ is sufficiently large.
Therefore, the assertion follows from (\ref{eqProp_fm_2_a}).
\end{proof}

\begin{appendix}

\section{Proof of \Lem~\ref{Lemma_Kosta}}\label{Sec_Kosta}

\noindent
We continue to use the notation from \Sec~\ref{Sec_boundary}.

\begin{claim}
There exists a number $t_0=t_0(k)$ such that for every $t>t_0$ we have
	$$\sum_{\rho:t_0n^{-1/2}<|\rho_{11}-1/4|<2^{-0.49k}}\Erw[Z_\rho^\tensor(\PHI)]\leq \exp(-t^2/4)\Erw[Z]^2.$$
\end{claim}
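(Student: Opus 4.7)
The plan is to evaluate $\Erw[Z_\rho^\tensor]$ in closed form via the auxiliary distribution $\vec\chi^\tensor$ from Section~\ref{Sec_boundary}, use Bayes' formula with the shifted parameter supplied by Lemma~\ref{Lemma_parameterShift}, and then Taylor-expand the resulting exponential rate around the ``symmetry point'' $\rho_{11} = 1/4$; summing the Gaussian tails will yield the claimed bound.

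Concretely, by Fact~\ref{Fact_sm} and Bayes' rule,
\begin{align*}
\Erw[Z_\rho^\tensor] \;=\; N_\rho \cdot \pr[\cS^\tensor\mid\cB^\tensor(\rho)] \;=\; N_\rho \cdot \frac{\pr[\cS^\tensor]\,\pr[\cB^\tensor(\rho)\mid\cS^\tensor]}{\pr[\cB^\tensor(\rho)]},
\end{align*}
where $N_\rho = \binom{n}{n\rho_{11},n\rho_{1,-1},n\rho_{-1,1},n\rho_{-1,-1}}$ counts ordered pairs of (balanced) truth assignments with overlap profile $\rho$. Taking the auxiliary parameter to be $q = q(\rho)$ from Lemma~\ref{Lemma_parameterShift}, Claims~\ref{Claim_Brho}, \ref{Claim_SsecondMoment} and~\ref{Claim_BsecondMoment} combine to give, up to polynomial factors in $n$,
\begin{align*}
\Erw[Z_\rho^\tensor] \;=\; O(\mathrm{poly}(n))\cdot N_\rho\cdot s(q(\rho))^m\cdot \exp\!\bigl(km\,\KL{\rho}{q(\rho)}\bigr),
\end{align*}
with $s(q) = 1-2(q_{-1,-1}+q_{-1,1})^k+q_{-1,-1}^k$. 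At $r := \rho_{11} = 1/4$ one checks via~(\ref{eqLemma_parameterShifteqn}) and $2q_0 = 1-(1-q_0)^k$ that $q(\rho)$ is the independent product of the single-assignment marginals (so that $q_{11} = q_0^2$, $q_{1,-1} = q_{-1,1} = q_0(1-q_0)$, $q_{-1,-1} = (1-q_0)^2$); then $s(q(\rho)) = (2q_0)^2$ and $\KL{\rho}{q(\rho)} = -\ln(4q_0(1-q_0))$, and comparison with Proposition~\ref{Prop_fm} gives $\Erw[Z_\rho^\tensor]\bigr|_{r=1/4} = \Theta(\mathrm{poly}(n))\cdot\Erw[Z]^2$.

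Setting $g(r) = \tfrac1n\ln\Erw[Z_\rho^\tensor] - \tfrac2n\ln\Erw[Z]$, the symmetry $r \leftrightarrow 1/2-r$ (obtained by negating one assignment) forces $g'(1/4) = 0$, while the second derivative is dominated by the entropy contribution from Stirling's expansion of $N_\rho$: writing $H(\rho) = -2r\ln r - 2(1/2-r)\ln(1/2-r)$ one has $H''(1/4) = -16$, and the chain-rule contributions from $s(q(r))^m$ and $\KL{\rho}{q(r)}$ are $\tilde O(2^{-k})$ by the derivative estimates~(\ref{eqLemma_parameterShift}) combined with~(\ref{eqAssumptionOnd}). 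Hence $g''(1/4) \le -c_0$ for a constant $c_0 \ge 15$ (say) once $k \ge k_0$. For $|r-1/4| < 2^{-0.49k}$ the cubic Taylor remainder is absorbed by the quadratic term, giving $g(r) \le -(c_0/4)(r-1/4)^2$, and the claim follows by the standard Gaussian tail estimate:
\begin{align*}
\sum_{\rho:\, tn^{-1/2}<|\rho_{11}-1/4|<2^{-0.49k}}\!\!\Erw[Z_\rho^\tensor] \;\le\; \mathrm{poly}(n)\,\Erw[Z]^2 \sum_{|r-1/4|>t/\sqrt n}\exp\!\bigl(-\tfrac{c_0 n}{4}(r-1/4)^2\bigr) \;\le\; \exp(-t^2/4)\,\Erw[Z]^2
\end{align*}
for every $t > t_0$ with $t_0$ sufficiently large (as $c_0/4 > 1/4$, the polynomial prefactor is absorbed).

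The main obstacle is the computation of $g''(1/4)$: the derivatives of $s(q(r))$ and $\KL{\rho}{q(r)}$ involve the implicit shift $q(r)$, and propagating the bounds~(\ref{eqLemma_parameterShift}) through the chain rule---and verifying, via~(\ref{eqAssumptionOnd}), that the resulting contributions are genuinely sub-dominant relative to $H''(1/4) = -16$---is the technical heart of the argument and is essentially what the authors refer to as the ``slight extension of~\cite{SAT}''.
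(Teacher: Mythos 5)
Your overall route is the same as the paper's (pass to the auxiliary product measure $\vec\chi^\tensor$ with the shifted parameter $q(\rho)$ of Lemma~\ref{Lemma_parameterShift}, apply Bayes via Claims~\ref{Claim_Brho}, \ref{Claim_SsecondMoment}, \ref{Claim_BsecondMoment}, and do a Laplace/Gaussian summation around $\rho_{11}=1/4$), but the step on which everything hinges is not proved. You justify $g'(1/4)=0$ by ``the symmetry $r\leftrightarrow 1/2-r$ obtained by negating one assignment''. There is no such symmetry in $k$-SAT: negating one assignment of a satisfying pair does not in general produce a satisfying assignment, because the set $\Sigma=\{\pm1\}^k\setminus\{(-1,\dots,-1)\}$ of satisfying clause patterns is not invariant under global negation (this is exactly what distinguishes SAT from NAE-SAT, where your argument would be fine). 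Concretely, under $\rho_{11}\mapsto\frac12-\rho_{11}$ the event $\cS^\tensor$ is mapped to ``every clause has a true literal under $\tau_1$ and a \emph{false} literal under $\tau_2$'', which has a different probability, so $\Erw[Z_\rho^\tensor]$ is not symmetric about $\rho_{11}=1/4$. The vanishing of the first derivative of the exponential rate at $\bar\rho$ is a genuine computation: one must differentiate $f(\rho)=\ln s(q(\rho))+k\KL{\rho}{q(\rho)}$, use the stationarity of $f$ in the $q$-variables at $\bar\rho$ (where $q_{s,t}=q_sq_t$ and $(1-q)^k=1-2q$) together with the defining equations of $q(\rho)$, and check that the explicit $\rho$-derivative of the Kullback--Leibler term cancels. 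Without this, the Gaussian bound you write down has no basis; this is the crux of the argument, not a symmetry fact.

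A second, more minor but still real, defect is the final absorption of the ``$\mathrm{poly}(n)$'' prefactor: an unbounded polynomial factor cannot be beaten by $\exp(-(c_0/4-1/4)t^2)$ for a fixed $t>t_0(k)$ uniformly in $n$ (you would need $t\gtrsim\sqrt{\ln n}$). The claim requires $t_0=t_0(k)$ only, so the prefactors must be tracked: per overlap value the ratio $\Erw[Z_\rho^\tensor]/\Erw[Z]^2$ carries an $O(n^{-1/2})$ factor (the $\Theta(n^{-3/2})$ local-limit factors of $\cB^\tensor(\rho)$ with and without conditioning on $\cS^\tensor$ cancel, and $2^n\binom{n}{2\rho_{11}n}=O(n^{-1/2})e^{nH(\rho)}$), the sum runs over $\Theta(n)$ values of $\rho_{11}$, and the Gaussian tail integral contributes $\Theta(n^{-1/2})$, so the polynomial factors cancel and one is left with $C(k)e^{-ct^2}\le e^{-t^2/4}$ for $t>t_0(k)$. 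Relatedly, your $N_\rho$ (the central multinomial) undercounts the pairs with overlap $2\rho_{11}n$ by a $\Theta(n)$ factor; harmless on the exponential scale, but it feeds into the same sloppy prefactor bookkeeping. Your treatment of the second derivative (entropy term $-16$ dominating $\tilde O(2^{-k})$ corrections propagated through the chain rule) is the right shape and matches the paper's calculation, though it is only sketched.
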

\begin{proof}
The proof is based on the Laplace method.
Specifically, let $q=q(\rho)$ be the vector from \Lem~\ref{Lemma_parameterShift}.
Then at the point $\rho=\bar\rho=\frac14\vecone$ we can express the vector $(q_{\pm1,\pm1})$ in terms of the solution $q$ to (\ref{eqq}).
Indeed, letting $q_{1}=q$, $q_{-1}=1-q$, we verify that 
the probability distribution $(q_sq_t)_{s,t=\pm1}$ satisfies (\ref{eqLemma_parameterShifteqn}).
Hence, \Lem~\ref{Lemma_parameterShift} implies that $q_{s,t}=q_sq_t$ for $s,t=\pm1$ at the point $\rho=\bar\rho$.
Plugging this distribution in and using Fact~\ref{Fact_sm}, Claim~\ref{Claim_SsecondMoment}, Claim~\ref{Claim_BsecondMoment}, Stirling's formula and Bayes' rule, we find
	\begin{align}\label{eqsmmf}
	\frac{\Erw[Z_\rho^\tensor(\PHI)]}{\Erw[Z^2]}&\leq O(n^{-1/2})\exp\brk{n(H(\rho)-2\ln 2)+m(f(\rho)-f(\bar\rho))},\qquad\mbox{where}\\
	f(\rho)&=\ln(1-2(q_{-1,-1}+q_{-1,1})^k+q_{-1,-1}^k)
		+k\KL{\rho}{q}.
		\nonumber
	\end{align}
We are going to prove that
	\begin{align}\label{eqsmmFirstDerivative}
	Df(\bar\rho)&=0,\\
	D^2f(\rho)&\preceq\frac n{km}\id\qquad\mbox
		{for all $\rho$ such that }|\rho_{11}-1/4|\leq 2^{-0.49k}.
			\label{eqsmmSecondDerivative}
	\end{align}
Since the entropy satisfies $DH(\bar\rho)=0$ and $D^2H(\rho)\preceq-\id$ if $|\rho_{11}-1/4|\leq 2^{-0.49k}$,
the assertion follows from (\ref{eqsmmf})--(\ref{eqsmmSecondDerivative}) and a Gaussian summation.

To prove (\ref{eqsmmFirstDerivative})\footnote{%
	The following fairly simple way of calculating $Df(\bar\rho)$ was pointed out to the first author by Victor Bapst.}
 we set
	$$f_1(\rho)=\ln(1-2(q_{-1,-1}+q_{-1,1})^k+q_{-1,-1}^k),\qquad f_2(\rho)=k\KL{\rho}{q}.$$
Further, let $s=1-2(q_{-1,-1}+q_{-1,1})^k+q_{-1,-1}^k$.
Then
	\begin{align}\label{eqsmmFirstDerivative1}
	\frac{\partial f_1}{\partial q_{1,1}}&=\frac{2k(q_{-1,-1}+q_{-1,1})^{k-1}-k q_{-1,-1}^{k-1}}{s},&
	\frac{\partial f_1}{\partial q_{1,-1}}&=\frac{2k(q_{-1,-1}+q_{-1,1})^{k-1}-q_{-1,-1}^{k-1}}{s}.
	\end{align}
Moreover, the partial  derivatives of the generic term $z\ln(z/y)$ of the Kullback-Leibler divergence work out to be
	\begin{align}\label{eqsmmSecondDerivative3}
	\frac{\partial}{\partial z}z\ln\frac zy&=\ln\frac zy,&
		\frac{\partial}{\partial y}z\ln\frac zy&=-\frac zy=-1-\frac{z-y}{y}.
	\end{align}
Hence, 
	\begin{align}	\label{eqsmmFirstDerivative2}
	\frac{\partial}{\partial q_{1,1}}\KL{\rho}q&=-\frac{\rho_{1,1}}{q_{1,1}}+\frac{\rho_{-1,-1}}{q_{-1,-1}},&
	\frac{\partial}{\partial q_{1,-1}}\KL{\rho}q&=-\frac{2\rho_{1,-1}}{q_{1,-1}}+\frac{2\rho_{-1,-1}}{q_{-1,-1}}.
	\end{align}
Using the relations $q_{s,t}=q_sq_t$ and $(1-q)^k=1-2q$, at the point $\rho=\bar\rho$ we obtain $s=4q^2$ and
	\begin{align}\label{eqsmmFirstDerivative3}
	\frac{2k(q_{-1,-1}+q_{-1,1})^{k-1}-k q_{-1,-1}^{k-1}}{s}&=\frac{k(1-2q)}{4q^2(1-q)^2},&
	\frac{2k(q_{-1,-1}+q_{-1,1})^{k-1}-q_{-1,-1}^{k-1}}{s}&=\frac{k(1-2q)}{2q(1-q)^2},\\
	-\frac{\rho_{1,1}}{q_{1,1}}+\frac{\rho_{-1,-1}}{q_{-1,-1}}&=\frac1{4(1-q)^2}-\frac1{4q^2},&
	-\frac{2\rho_{1,-1}}{q_{1,-1}}+\frac{2\rho_{-1,-1}}{q_{-1,-1}}&=\frac{1}{2(1-q)^2}-\frac{1}{2q(1-q)}.
		\label{eqsmmFirstDerivative4}
	\end{align}
Plugging (\ref{eqsmmFirstDerivative3})--(\ref{eqsmmFirstDerivative4}) into (\ref{eqsmmFirstDerivative1}) and (\ref{eqsmmFirstDerivative2}) and simplifying,
we obtain
	\begin{align}\label{eqsmmFirstDerivative5}
	\frac{\partial}{\partial q_{1,1}}f(\rho)&=\frac{\partial}{\partial q_{1,-1}}f(\rho)=0.
	\end{align}
Further, combining (\ref{eqsmmSecondDerivative3}) and (\ref{eqsmmFirstDerivative5}) and using the chain rule, we get
	\begin{align}\label{eqsmmFirstDerivative6}
	\frac{\partial}{\partial\rho_{1,1}}f_2(\rho)&=\frac{\partial f(\rho)}{\partial q_{1,1}}\frac{\partial q_{1,1}}{\partial \rho_{1,1}}
		+\frac{\partial f(\rho)}{\partial q_{1,-1}}\frac{\partial q_{1,-1}}{\partial \rho_{1,1}}+
		\ln\frac{1}{4q^2}-2\ln\frac1{4q(1-q)}+\ln\frac{1}{4(1-q)^2}=0.
	\end{align}
Thus, (\ref{eqsmmFirstDerivative}) follows from (\ref{eqsmmFirstDerivative5}), (\ref{eqsmmFirstDerivative6}) and the chain rule.

With respect to the second derivative,  letting
	$$u=\frac{2k(k-1)(q_{-1,-1}+q_{-1,1})^{k-2}s-4k^2(q_{-1,-1}+q_{-1,1})^2}{s^2},$$
we find
	\begin{align}\label{eqsmmSecondDerivative1}
	\frac{\partial f_1}{\partial q_{1,1}},\frac{\partial f_1}{\partial q_{1,-1}}&=\tilde O(2^{-k}),&
	\frac{\partial^2 f_1}{\partial q_{1,\pm1}\partial q_{1,\pm1}}&=u+\tilde O(4^{-k}).
	\end{align}
Combining (\ref{eqsmmSecondDerivative1}) with (\ref{eqLemma_parameterShift}) and using the chain rule, we obtain
	\begin{align}\label{eqsmmSecondDerivative2}
	\frac{\partial^2 f_1}{\partial \rho_{11}^2}&=\tilde O_k(4^{-k}).
	\end{align}
Proceeding to $f_2$, we recall that the second differentials of the genertic term $z\ln(z/y)$ of the Kullback-Leibler divergence read
	\begin{align}
	\frac{\partial^2}{\partial z^2}z\ln\frac zy&=\frac 1z,&
			\frac{\partial^2}{\partial y^2}z\ln\frac zy&=\frac z{y^2},&
			\frac{\partial^2}{\partial y\partial z}z\ln\frac zy&=-\frac1y.
				\label{eqsmmSecondDerivative4}
	\end{align}
We verify that in the case $|\rho_{11}-\frac14|\leq2^{-0.49k}$ the implicit parameters satisfy $q_{\pm1,\pm1}-\rho_{\pm1,\pm1}=\tilde O_k(2^{-k})$.
Moreover, $q_{-1,1}=q_{1,-1}$ and $q_{-1,-1}=1-q_{-1,1}-q_{1,-1}-q_{1,1}$.
Therefore, (\ref{eqsmmSecondDerivative3}) yields
	\begin{align*}
	\frac{\partial f_2}{\partial q_{1,1}},\frac{\partial f_2}{\partial q_{1,-1}}&=\tilde O_k(2^{-k}).
	\end{align*}
Hence, by (\ref{eqLemma_parameterShift}) and the chain rule,
	\begin{align}				\label{eqsmmSecondDerivative5}
	\frac{\partial f_2}{\partial q_{1,1}}\frac{\partial^2 q_{1,1}}{\partial\rho_{11}^2}
		+\frac{\partial f_2}{\partial q_{1,-1}}\frac{\partial^2 q_{1,-1}}{\partial\rho_{11}^2}&=\tilde O_k(4^{-k}).
	\end{align}
Further, using (\ref{eqsmmSecondDerivative4}) and~(\ref{eqLemma_parameterShift}) and recalling that 
$q_{\pm1,\pm1}=\frac14+O(2^{-0.49k})$ if $|\rho_{11}-\frac14|\leq2^{-0.49k}$, we obtain
	\begin{align}\label{eqsmmSecondDerivative6}
	\frac{\partial^2 f_2}{\partial^2 q_{1,1}}\bcfr{\partial q_{1,1}}{\partial\rho_{1,1}}^2+
	\frac{\partial^2 f_2}{\partial^2 q_{1,-1}}\bcfr{\partial q_{1,-1}}{\partial\rho_{1,1}}^2
	+2\frac{\partial^2 f_2}{\partial q_{1,1}\partial q_{1,-1}}
		\frac{\partial q_{1,1}}{\partial\rho_{1,1}}
		\frac{\partial q_{1,-1}}{\partial\rho_{1,1}}=\tilde O_k(4^{-k}).
	\end{align}	
Combining (\ref{eqsmmSecondDerivative5}) and (\ref{eqsmmSecondDerivative6}), we get
	\begin{align}\label{eqsmmSecondDerivative7}
	\frac{\partial^2 f_2}{\partial \rho_{1,1}^2}&=\tilde O_k(4^{-k}).
	\end{align}
Finally, since $m/n=\tilde O_k(2^{-k})$, (\ref{eqsmmSecondDerivative}) follows from (\ref{eqsmmSecondDerivative2}) and (\ref{eqsmmSecondDerivative7}).	
\end{proof}

\begin{claim}
We have
	$$\sum_{\rho:|\rho_{11}-1/4|>2^{-0.49k}}\Erw[Z_\rho^\tensor(\PHI)]\leq \exp(-\Omega(n))\Erw[Z]^2.$$
\end{claim}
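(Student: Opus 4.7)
The plan is to first observe that the bound (\ref{eqsmmf}) derived in the preceding claim,
\[
\frac{\Erw[Z_\rho^\tensor]}{\Erw[Z]^2}\leq O(n^{-1/2})\exp\brk{n(H(\rho)-2\ln2)+m(f(\rho)-f(\bar\rho))},
\]
is in fact valid \emph{globally} on the set of feasible $\rho$ (subject to (\ref{eqOverlapParameter})): its derivation uses only Bayes' rule, Claims~\ref{Claim_Brho}, \ref{Claim_SsecondMoment}, \ref{Claim_BsecondMoment}, and Stirling's formula, none of which require $\rho$ to be close to $\bar\rho$. Writing $\Phi(\rho)=H(\rho)-2\ln2+(m/n)(f(\rho)-f(\bar\rho))$, it suffices to show that there is a constant $c_0>0$ (possibly depending on $k$) such that $\Phi(\rho)\leq-c_0$ uniformly for all feasible $\rho$ with $|\rho_{1,1}-1/4|>2^{-0.49k}$, since summing over the $O(n)$ admissible values of $\rho_{1,1}$ then yields $\exp(-\Omega(n))\Erw[Z]^2$.

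I would split the range into two subregions. In the intermediate subregion $2^{-0.49k}<|\rho_{1,1}-1/4|\leq\eta$ (for a small absolute constant $\eta>0$ to be chosen), I would extend the Hessian estimate (\ref{eqsmmSecondDerivative}) of the preceding claim to a fixed neighbourhood of $\bar\rho$ of radius $\eta$: the proof of (\ref{eqsmmSecondDerivative}) proceeds by showing that the contributions of $(m/n)f$ to $D^2\Phi$ are $\tilde O(2^{-k})$ while the entropy Hessian $D^2H$ contributes a negative-definite constant matrix, and exactly the same estimates hold provided $\rho_{s,t}$ and the implicit parameters $q_{s,t}$ stay bounded away from $0$ and $1$. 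Combining this with $\Phi(\bar\rho)=0=D\Phi(\bar\rho)$ (proved in the preceding claim) via Taylor's theorem gives $\Phi(\rho)\leq-\tfrac14(\rho_{1,1}-1/4)^2\leq-\tfrac14\cdot4^{-0.49k}$, which is $-\Omega(1)$ for fixed $k$; hence $\exp(n\Phi(\rho))\leq\exp(-\Omega(n))$ in this subregion.

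In the far subregion $|\rho_{1,1}-1/4|\geq\eta$, I would argue that $\Phi(\rho)\leq-c_1<0$ for some $c_1=c_1(k,\eta)>0$. On the compact subset of the feasible region where $\rho_{s,t}\geq\eta'$ for all $s,t$ (with $\eta'>0$ small), this follows by continuity from the fact, implicit in~\cite{SAT}, that $\bar\rho$ is the unique maximiser of $\Phi$ with $\Phi(\bar\rho)=0$; the preceding claim already established the strict concavity of $\Phi$ near $\bar\rho$, and away from $\bar\rho$ the entropy gap $H(\rho)-2\ln2$ is a strictly negative constant that dominates the bounded quantity $(m/n)(f(\rho)-f(\bar\rho))$ (since $m/n=\Theta(2^k/k)$ while $f(\rho)-f(\bar\rho)$ remains $O(k2^{-k})$ here, as follows from Lemma~\ref{Lemma_parameterShift} and the structure of $f$). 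For $\rho$ approaching the boundary of the simplex, say $\min_{s,t}\rho_{s,t}\to0$, I would bound the contribution more crudely: since $\binom{n}{\alpha}\leq 2^n\exp(-2n(\alpha/n-1/2)^2)$ and extreme $\rho_{1,1}$ corresponds to $\alpha$ near $0$ or $n$, the factor $H(\rho)-2\ln2\to-\ln2$ or worse already gives $\exp(-\Omega(n))$, while the $f$ term is negligible since the clause-level partition function is a bounded perturbation.

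The main obstacle will be the boundary analysis: when $\rho$ approaches a face of the simplex the implicit parameters $q_{s,t}$ in Lemma~\ref{Lemma_parameterShift} may degenerate, so $f(\rho)$ is not obviously uniformly bounded. A clean way to sidestep this is to separately treat a thin boundary strip $\{\min_{s,t}\rho_{s,t}<\eta'\}$ using the trivial bound $\Erw[Z_\rho^\tensor]\leq 2^n\binom{n}{\alpha}$ together with $\Erw[Z]\geq\exp(\Omega(n))$ from Proposition~\ref{Prop_fm}; the binomial coefficient $\binom{n}{\alpha}$ is already exponentially smaller than $4^n$ for $\alpha$ near the boundary, so after dividing by $\Erw[Z]^2$ this contribution is $\exp(-\Omega(n))$, completing the proof.
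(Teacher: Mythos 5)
There is a genuine gap, and it sits exactly where the claim is hardest: the regime of strongly correlated pairs, $\rho_{1,1}$ close to $\frac12$ (equivalently $\alpha$ close to $n$). Your ``far subregion'' step asserts that the entropy gap $H(\rho)-2\ln2$ dominates $(m/n)(f(\rho)-f(\bar\rho))$. It does not: $m/n=2d/k=\Theta(2^k)$, and for $\rho_{1,1}$ near $\frac12$ the per-clause log-probability that \emph{both} assignments are satisfying is $\approx\ln(1-2^{-k})$, versus $\approx\ln(1-2^{1-k})$ at $\bar\rho$, so $f(\rho)-f(\bar\rho)\approx 2^{-k}$ and the product is $\approx\ln2$ --- the same order as the entropy gap $-\ln 2$, which it nearly cancels. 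Indeed at $\alpha=n$ one has $\Erw[Z^\tensor_{\alpha}]/\Erw[Z]^2=1/\Erw[Z]=\exp(-\Theta(n2^{-k}))$ by (\ref{eqFirstMomentFormula3}), so the true exponent in this region is only barely negative, and whether it stays negative on $(1/4+2^{-0.49k},1/2]$ is precisely where the assumption (\ref{eqAssumptionOnd}) on $d$ must enter quantitatively; no domination or compactness/continuity argument can settle it. Your fallback for the boundary strip fails for the same reason: bounding $\Erw[Z_\rho^\tensor]\le 2^n\bink{n}{\alpha}$ and dividing by $\Erw[Z]^2=\exp(\Theta(n2^{-k}))$ leaves a quantity as large as $\exp(n(\ln2-\Theta(2^{-k})))$ near $\alpha=n$, i.e.\ exponentially \emph{large}; discarding the satisfaction probability there loses a factor of roughly $2^{-n}$. (A further, more minor, issue is that (\ref{eqsmmf}) is not available globally as stated: Claim~\ref{Claim_BsecondMoment} is only proved for $\rho_{s,t},q_{s,t}\in[1/8,3/8]$, and the implicit $q(\rho)$ of \Lem~\ref{Lemma_parameterShift} is only controlled there.)

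The paper's proof avoids all of this by a different device: since Fact~\ref{Fact_sm} holds for \emph{any} choice of the auxiliary distribution, it takes $q=\rho$ rather than the tuned $q(\rho)$, which yields the explicit, globally valid one-dimensional bound $\Erw[Z_\rho^\tensor]\le\exp\brk{n\bc{H(\rho)+\frac{2d}k\ln(1-2^{1-k}+\rho_{1,1}^k)}+o(n)}$. The claim then reduces to elementary calculus for $g(\rho_{1,1})=H(\rho)+\frac{2d}k\ln(1-2^{1-k}+\rho_{1,1}^k)$: by symmetry the maximum lies in $[1/4+2^{-0.49k},1/2)$, the derivative has exactly two zeros, a local maximum at $x_1=(1+O(k^{-1}))2^{-k}$ --- the correlated peak your argument overlooks --- and a local minimum at $x_2=\Theta(\ln k/k)$, and one checks directly that the values at this local maximum and at $\rho_{1,1}=1/4+2^{-0.49k}$ lie strictly below the first-moment exponent. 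Your intermediate-region Taylor expansion is fine in spirit (it is essentially the paper's first appendix claim, and extending it to a small constant radius is plausible), but without a correct treatment of $\rho_{1,1}\in(1/4+\eta,1/2]$ the proposed proof does not go through.
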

\begin{proof}
We observe that Fact~\ref{Fact_sm} holds for {\em any} choice of the auxiliary variables $(q_{\pm1,\pm1})$ that define
the random vector $\vec\chi^\tensor$.
Hence, choosing $q=\rho$ and applying Bayes' rule, we find
	\begin{align*}
	\Erw[Z_\rho^\tensor]&\leq\exp\brk{n\bc{H(\rho)+\frac{2d}k\ln(1-2^{1-k}+\rho_{11}^k)}+o(n)}.
	\end{align*}
We claim that
	$$g(\rho_{1,1})=H(\rho)+\frac{2d}k\ln(1-2^{1-k}+\rho_{1,1}^k)$$
attains its maximum at the boundary point $\rho_{11}=1/4+2^{-0.49k}$.
Indeed, we read off that $g(\rho_{1,1})>g(\frac12-\rho_{1,1})$ if $\rho_{11}>1/4$.
Hence, the maximum occurs in the interval $\rho_{1,1}\in[1/4+2^{-0.49k},1/2)$.
Further, since $g(\rho)$ is a sum of the concave $\rho_{11}\mapsto H(\rho)$ and a multiple of the convex  $\rho_{11}\mapsto\ln(1-2^{1-k}+\rho_{11}^k)$,
it suffices to prove this claim for the maximum value of $2d/k$ that (\ref{eqAssumptionOnd}) allows.
Hence, for this $d$ we need to study the zeros of
	\begin{align*}
	\frac\partial{\partial\rho_{11}}g(\rho)&=2\ln\frac{1-2\rho_{11}}{2\rho_{11}}+\frac{2d\rho_{11}^{k-1}}{1-2^{1-k}+\rho_{11}^k}
	\end{align*}
Setting $x=\frac{1-2\rho_{11}}{2\rho_{11}}\in(0,1)$ and taking exponentials, we transform this problem into finding the solutions to
	\begin{align*}
	x&= \exp\bc{-\frac{2d(1+x)}{(2^k-2)(1+x)^k+1}} 
	\end{align*}
for $x\in(0,\frac12-2^{-0.49k})$.
A bit of elementary calculus shows that there are just two solutions, namely $$x_1=(1+O(k^{-1}))2^{-k}\qquad\mbox{ and }\qquad x_2=\Theta(\ln k/k).$$
The first solution $x_1$ is indeed a local maximum, but a direct calculation yields $g((1+x_1)/2)<g(\frac14+2^{-0.49k})$.
Moreover, $x_2$ is a local minimum.
Finally, the assertion follows from the observation that $g(\frac14+2^{-0.49k})<f(\bar\rho)$.
\end{proof}

\end{appendix}


\begin{thebibliography}{29}


\bibitem{Barriers}
D.~Achlioptas, A.~Coja-Oghlan: 
Algorithmic barriers from phase transitions.
Proc.~49th FOCS (2008) 793--802.

\bibitem{nae}
D.~Achlioptas, C.~Moore:
Random $k$-SAT: two moments suffice to cross a sharp threshold.
SIAM Journal on Computing {\bf 36} (2006) 740--762.

\bibitem{nature}
D.~Achlioptas, A.~Naor, Y.~Peres:
Rigorous location of phase transitions in hard optimization problems.
Nature {\bf 435} (2005) 759--764.

\bibitem{yuval}
D.~Achlioptas, Y.~Peres:
The threshold for random $k$-SAT is $2^k \ln 2 - O(k)$.
Journal of the AMS \textbf{17} (2004) 947--973.


\bibitem{victor}
V.~Bapst, A.~Coja-Oghlan:
The condensation phase transition in the regular $k$-SAT model.
arXiv:1507.03512 (2015).

\bibitem{Bollobas} B.~\Bollobas{:} 
Random graphs.
2nd edition Cambridge (2001).


\bibitem{BBK} A.~B{\'e}k{\'e}ssy, P.~B{\'e}k{\'e}ssy{, J.~Koml\'os:}
Asymptotic
enumeration of regular matrices. {\em Studia Sci.\ Math.\ Hungar.} {\bf 7}
(1972),
343--353.


\bibitem{Erwin}
E. Bolthausen:
An estimate of the remainder in a combinatorial central limit theorem.
Zeitschrift f\"ur Wahrscheinlichkeitstheorie und verwandte Gebiete {\bf 66} (1984) 379--386.


\bibitem{kSAT}
A.~Coja-Oghlan, K.\ Panagiotou:
The asymptotic $k$-SAT threshold.
Advances in Mathematics {\bf288} (2016) 985--1068.


\bibitem{SAT}
A.~Coja-Oghlan, K.~Panagiotou:
Going after the $k$-SAT threshold.
Proc. 45th STOC (2013) 705--714.

\bibitem{Danny}
A.~Coja-Oghlan, D.~Vilenchik:
Chasing the $k$-colorability threshold.
Proc.\ 54th FOCS (2013) 380--389.

\bibitem{Lenka}
A.~Coja-Oghlan, L.~Zdeborov\'a:
The condensation transition in random hypergraph 2-coloring.
Proc.~23rd SODA (2012) 241--250.

\bibitem{DavisMcDonald}
B.~Davis, D.~McDonald:
An elementary proof of the local central limit theorem.
Journal of Theoretical Probability {\bf 8} (1995) 693--701.

\bibitem{DSS1}
J.~Ding, A.~Sly, N.~Sun:
Satisfiability threshold for random regular NAE-SAT.
Proc.\ 46th STOC (2014) 814--822.


\bibitem{DSS3}
J.~Ding, A.~Sly, N.~Sun:
Proof of the satisfiability conjecture for large $k$.
Proc.\ 47th STOC (2015) 59--68.

\bibitem{FriezeWormald}
A.~Frieze, N.{C.}~Wormald:
Random $k$-Sat: a tight threshold for moderately growing $k$.
Combinatorica {\bf 25} (2005) 297--305.

\bibitem{Janson}
S.~Janson: Random regular graphs: asymptotic distributions and contiguity.
Combinatorics, Probability and Computing {\bf 4} (1995) 369--405.


\bibitem{KKKS}
L.~Kirousis, E.~Kranakis, D.~Krizanc, Y.~Stamatiou:
Approximating the unsatisfiability threshold of random formulas.
Random Structures Algorithms {\bf 12} (1998) 253--269.

\bibitem{MPZ}
M.~M\'ezard, G.~Parisi, R.~Zecchina:
Analytic and algorithmic solution of random satisfiability problems.
Science {\bf 297} (2002) 812--815.

\bibitem{MRRW}
M. Molloy,  H. Robalewska, R.W. Robinson{, N.C. Wormald:}
1-factorisations of random regular graphs, {\it Random Structures and
Algorithms} {\bf
10} (1997), 305--321

\bibitem{R}
F. Rassmann: On the number of solutions in random hypergraph 2-colouring, arXiv:1603.07523 (2016).


\bibitem{Rathi}
V.~Rathi, E.~Aurell, L.~K.~Rasmussen, M.~Skoglund:
Bounds on threshold of regular random $k$-SAT.
Proc.\ 12th SAT (2010)  264--277.

\bibitem{RobinsonWormald}
R.~W.~Robinson{,} N.~C.~Wormald{:} \emph{Almost all cubic graphs are
hamiltonian}, Random Structures and Algorithms {\bf 3} (1992) 117--125.


\bibitem{SSZ}
A.\ Sly, N.\ Sun, Y.\ Zhang:
The number of solutions for random regular NAE-SAT.
arXiv:1604.08546 (2016).

\bibitem{Wth} N.C. Wormald{:} {\em Some Problems in the Enumeration of Labelled
Graphs}. Doctoral thesis, Newcastle University, 1978.
\end{thebibliography}
\end{document}